\numberwithin{equation}{section}
\title{Error bounds for approximations with deep ReLU neural networks in $W^{s,p}$ norms}
\author{Ingo G{\"u}hring\thanks{Institut für Mathematik, Technische Universität Berlin, E-mail: \tt{guehring@math.tu-berlin.de}} \qquad  \qquad 
Gitta Kutyniok\thanks{Institut für Mathematik, Technische Universität Berlin, E-mail: \tt{kutyniok@math.tu-berlin.de}} \qquad  \qquad
Philipp Petersen\thanks{Mathematical Institute, University of Oxford, E-mail: \tt{Philipp.Petersen@maths.ox.ac.uk}}}
\newcommand{\R}{\mathbb{R}}
\newcommand{\N}{\mathbb{N}}
\newcommand{\Z}{\mathbb{Z}}
\newcommand{\bmat}[2]{\left[ \begin{array}{#1} #2 \end{array} \right]}
\newcommand{\id}{\mathrm{Id}}
\newcommand{\supp}{\mathrm{supp~}}
\newcommand{\ksub}{\subset\subset}
\let\emptyset\varnothing
\DeclareDocumentCommand{\Wkp}{ O{k} O{p} O{\Omega}}{{W^{#1,#2}(#3)}}
\DeclareDocumentCommand{\tWkp}{ O{k} O{p} O{\Omega}}{{\wtilde W^{#1,#2}(#3)}}
\DeclareDocumentCommand{\Wkpm}{ O{k} O{p} O{\Omega} O{m}}{{W^{#1,#2}(#3;\,\R^{#4})}}
\DeclareDocumentCommand{\Wkpd}{ O{k} O{p} O{\Omega} m}{W^{#1,#2}(#3;d#4)}
\DeclareDocumentCommand{\Lp}{ O{p} O{\Omega}}{L^{#1}(#2)}
\DeclareDocumentCommand{\Lpd}{ O{p} O{\Omega}}{L^{#1}(#2;dx)}
\newcommand{\Winf}{W^{1,\infty}(\Omega)}
\DeclareDocumentCommand{\Lip}{ O{\Omega}}{\Cns[0][1][#1]}
\newcommand{\Li}{{L^{\infty}}}
\newcommand{\Linf}{{L^{\infty}(\Omega)}}
\newcommand{\Linfc}{{L^{\infty}(\cube^d)}}
\newcommand{\WinfMd}{W^{1,\infty}(\intervalo{-M}{M}^2;dxdy)}
\newcommand{\WinfM}{W^{1,\infty}(\intervalo{-M}{M}^2)}
\newcommand{\Winfnc}{W^{n,\infty}(\cube^d)}
\newcommand{\cube}{\intervalo{0}{1}}
\newcommand{\Ctest}{C_c^\infty(\Omega)}
\newcommand{\Phid}{\Phi^\id}
\newcommand{\As}{A_{\text{sum}}}
\newcommand{\Phis}{\Phi_{\text{sum}}}
\newcommand{\Phist}{\Phi_{\text{st}}}
\newcommand{\PhiPs}{\Phi_{P,\eps}}
\newcommand{\epsin}{\intervalo{0}{\nicefrac{1}{2}}}
\newcommand{\netLarge}{\Ncal_{\text{large}}}
\newcommand{\netSmall}{\Ncal_{\text{small}}}
\renewcommand{\epsilon}{\varepsilon}
\newcommand{\eps}{\varepsilon}
\newcommand{\alplus}{\hspace{5mm}}
\newcommand{\normabsspace}{\;}
\renewcommand{\rho}{\varrho}
\newcommand{\MNd}{\{0,\ldots,N\}^d}
\newcommand{\expl}[1]{\text{\scriptsize{(#1)}}}
\newcommand{\Arch}{{\mathcal{A}}}
\newcommand{\Ncal}{\mathcal{N}}
\newcommand\numberthis{\addtocounter{equation}{1}\tag{\theequation}}
\newcommand{\Lcal}{\mathcal{L}}
\newcommand{\wtilde}{\widetilde}
\newcommand{\appsq}{\Phi^{\text{sq}}}
\newcommand{\appsqdelta}{\Phi^{\text{sq}}_\delta}
\newcommand{\interv}{\left(\nicefrac{k}{2^m},\nicefrac{(k+1)}{2^m}\right)}
\newcommand{\interval}[2]{\left[#1,#2\right]}
\newcommand{\intervalo}[2]{\left(#1,#2\right)}
\newcommand{\ddx}[1]{\frac{\partial}{\partial #1}}
\newcommand{\Fndone}{\mathcal{F}_{n,d,\infty,1}}
\newcommand{\Fnd}{\mathcal{F}_{n,d,\infty,B}}
\newcommand{\Fndp}{\mathcal{F}_{n,d,p,B}}
\newcommand{\fraccoef}{s}
\newcommand{\apmult}{\widetilde\times}
\newcommand{\act}[1]{R_\rho(#1)}
\newcommand{\actbig}[1]{R_\rho\big(#1\big)}
\newcommand{\comp}{\subset\subset}
\newcommand{\starmax}{r^\star_{\text{max}}}
\DeclareDocumentCommand{\icouple}{O{B_0} O{B_1} O{\theta} O{p}}{\left(#1,#2\right)_{#3,#4}}
\newcommand{\ipair}{B_{\theta,p}}
\newcommand{\bigO}{\mathcal{O}}
\definecolor{green}{rgb}{0.2,0.6,0.15}
\DeclareMathOperator{\vcdim}{VCdim}
\DeclareMathOperator{\ran}{ran}
\DeclareMathOperator{\diam}{diam}
\DeclareMathOperator{\co}{conv}
\DeclareMathOperator{\esssup}{ess\, sup}
\DeclarePairedDelimiter{\ceil}{\lceil}{\rceil}
\DeclarePairedDelimiter{\floor}{\lfloor}{\rfloor}
\DeclarePairedDelimiterX{\norm}[1]{\lVert}{\rVert}{#1}
\DeclarePairedDelimiterX{\pabs}[1]{\lvert}{\rvert}{#1}
\theoremstyle{definition}
\newtheorem{definition}{Definition}[section]
\theoremstyle{plain}
\newtheorem{remark}[definition]{Remark}
\newtheorem*{remark*}{Remark}
\newtheorem{theorem}[definition]{Theorem}
\newtheorem{definitionlemma}[definition]{Definition/Lemma}
\newtheorem{lemma}[definition]{Lemma}
\newtheorem{corollary}[definition]{Corollary}
\newtheorem{proposition}[definition]{Proposition}
\begin{document}

\maketitle

\begin{abstract}
We analyze approximation rates of deep ReLU neural networks for Sobolev-regular functions with respect to weaker Sobolev norms. First, we construct, based on a calculus of ReLU networks, artificial neural networks with ReLU activation functions that achieve certain approximation rates. Second, we establish lower bounds for the approximation by ReLU neural networks for classes of Sobolev-regular functions. 
Our results extend recent advances in the approximation theory of ReLU networks to the regime that is most relevant for applications in the numerical analysis of partial differential equations.
\end{abstract}

\pagestyle{plain}

\setcounter{tocdepth}{2}
\pagenumbering{arabic}

\section{Introduction}
Powered by modern, highly parallelized hardware and the immense amount of accessible data, deep neural networks substantially outperform both traditional modelling approaches, e.g.\ those based on differential equations, and classical machine learning methods in a wide range of applications. Prominent areas of application include image classification \cite{huang2017densely, simonyan2014very, krizhevsky2012imagenet}, speech recognition  \cite{hinton2012deep, dahl2012context, wu2016stimulated}, and natural language processing \cite{young2018recent}.

Despite this overwhelming success in applications, a comprehensive mathematical explanation of this success is has not been found. However, a deep theoretical understanding of these techniques is crucial to design more efficient architectures and essential in safety-critical applications, such as autonomous driving.

Many attempts at unraveling the extreme efficiency of deep neural networks have been made in the context of approximation theory. The universal approximation theorem \cite{cybenko1989approximation, hornik1991approximation} establishes that every continuous function on a compact domain can be uniformly approximated by neural networks. More refined results that also set into relation the size and the approximation fidelity of neural networks have been reported for smooth activation functions in, for example, \cite{Barron1994, Mhaskar:1996:NNO:1362203.1362213, ShaCC2015provableAppDNN, bolcskei2017optimal}. 

In most applications, the activation functions are not smooth but taken to be the piecewise linear ReLU function. For these networks, approximation bounds for classes of smooth functions have been established in \cite{yarotsky2017error} and for piecewise smooth functions in \cite{petersen2017optimal}. Connections to sparse grids and the associated approximation rates were established in \cite{montanelli2019} and connections to linear finite element approximation were reported in \cite{he2018relu}.
It was also discovered that the depth of neural networks, i.e., the number of layers, crucially influences the approximation capabilities of these networks in the sense that deeper networks are more efficient approximators \cite{pmlr-v70-safran17a, mhaskar2017when, petersen2017optimal, cohen2016expressive}.

One of the applications of deep learning where a strong knowledge of the approximation capabilities of neural networks directly translates into quantifiable theoretical results appears when solving partial differential equations using deep learning techniques. Some notable advances in this direction have been made in \cite{lagaris1998artificial, weinan2018deep, han2018solving, sirignano2018dgm, weinan2017deep, elbrachter2018dnn, beck2018solving}. In this regard, not only the approximation fidelity with respect to standard Lebesgue spaces is of high interest, but also that with respect to Sobolev-type norms. First results in this direction were reported in \cite{schwab2018deep, opschoor2019deep}.

In this work, we give a comprehensive analysis of the approximation rates of deep neural networks of Sobolev-regular functions with respect to (fractional) Sobolev norms.

In the remaining part of this introduction, we briefly describe neural network architectures relevant to this work. Then, we present the setting that neural networks are typically used in. Afterwards, we give a practical and theoretical motivation for studying approximations of functions and their derivatives with neural networks. Finally, we state our results.

\subsection{Neural networks}
There exists a wide variety of neural network architectures, each adapted to specific tasks. One of the most commonly-used architectures is a \emph{feedforward} architecture also known as \emph{multi-layer perceptron} which implements a function as a sequence of affine-linear transformations followed by a componentwise application of a non-linear function, called \emph{activation function}. The length $L$ of the sequence is referred to as the number of \emph{layers} of the network and the input of a layer consists of the output of the preceding layer. We denote by $N_0\in\N$ the dimension of the input space and by $N_l\in\N$ the dimension of the output of the $l$-th layer for~$l=1,\ldots,L$. Thus, if $\rho:\R\to\R$ is the activation function and\[
	T_l:\R^{N_{l-1}}\to\R^{N_l}	
	\] is the affine-linear transformation of the $l$-th layer, then the computation in that layer can be described as
	\[
		f_l:\R^{N_{l-1}}\to\R^{N_l},\qquad x\mapsto \rho(T_l(x)) 
	\]
	for $l=1,\ldots,L-1$, and
	\[
		f_L:\R^{N_{L-1}}\to\R^{N_L},\qquad x\mapsto T_L(x).
	\]
	Note that $\rho$ acts componentwise in all but the last layer and in the last layer no activation function is applied.
The parameters defining the affine-linear transformations $T_l$ are referred to as \emph{weights} and $\sum_{k=0}^L N_l$ is called the number of \emph{neurons} of the network, since each output coordinate of $f_l$ can be seen as a small computational unit, similar to neurons in the brain. If a network has $3$ or more layers, then it is usually called \emph{deep} and a $2$-layer network is called \emph{shallow}. The \emph{complexity of a neural network} is typically measured in the number of layers, weights, and neurons (see~\cite{anthony2009neural}). The term \emph{deep learning} refers to the subset of machine learning methods associated with deep neural networks. 

Recently, more general feedforward architectures which allow connections between non-neighboring layers, so-called \emph{skip connections}, have been shown to yield state-of-the-art results in object recognition tasks \cite{huang2017densely, he2016deep}. Here, the input of a layer consists of the output of \emph{all} preceding layers.  Note, that the case where no skip connections are allowed, is a special case of this more general architecture.

One of today's most widely-used activation functions is the \emph{Rectified Linear Unit (ReLU)} (see~\cite{lecun2015deep}), defined as
\[
	\rho:\R\to\R,\qquad x\mapsto\max\{0,x\}.
	\]
The popularity of the ReLU can be explained by a number of factors. It is cheap to compute, promotes sparsity in data representation \cite{bengio2013representation}, alleviates the problem of vanishing-gradients \cite{bengio2011deepsparse}, and thus yields better optimization properties. Moreover, the ReLU is the only commonly-used activation function such that the associated function spaces are not necessary non-closed \cite{petersen2018topological}. In this work, we will mostly focus on ReLU networks with a feedforward architecture allowing skip connections.

\subsection{Supervised learning with neural networks}
Typically, neural networks are applied in \emph{supervised learning} problems. The starting point is a dataset of input-output pairs $(x_i,f(x_i))_{i=1}^m$, called samples, where $f$ is in most cases an unknown function\footnote{We will later see a case where $f$ is in fact known.} with values only given at sample points $x_i$. As an example, $x_i$ can be thought of as an image and $f(x_i)$ as a vector of scores, where each score is the probability of a certain category, e.g. "dog" or "cat", being associated with $x_i$. During training, one then seeks to \emph{learn} the function $f$ by adapting the weights $w$ of a neural network $\Ncal$ such that the \emph{empirical loss}
\begin{equation}\label{eq:intro_empirical_loss}
	\frac{1}{m}\sum_{i=1}^m l\left(\Ncal(x_i|w),f(x_i)\right)	
\end{equation}
is minimized for some loss function $l$. Ultimately, one is interested in how well the learning algorithm performs on formerly unseen data points  $x$. The associated error is called the \emph{generalization error}. 

\subsection{Motivation: approximating functions and derivatives}\label{subsec:intro_motivation}
From a learning theory point of view, the task of estimating the generalization error decomposes into a statistical problem depending on the samples and an approximation theoretical problem, independent of the samples. An accessible introduction to learning theory from the perspective of approximation theory can be found in \cite{cucker2007learning}. The aim of this work is to study the simultaneous approximation of a function and its derivative with a neural network. There are multiple scenarios in which this is possible and useful. 

\begin{itemize}
	\item  In \cite{NIPS2017_7015}, the $j$-th order derivatives of $f$ are incorporated in the empirical loss function~\eqref{eq:intro_empirical_loss}, resulting in an empirical loss
\[
	\frac{1}{m}\sum_{i=1}^m\left[l\left(\Ncal(x_i|w),f(x_i)\right)+\sum_{j=1}^k l_j\left(D^j_x\Ncal(x_i|w),D^j_x f(x_i)\right)\right],
	\]
		which encourages the network to encode information about the derivatives of $f$ in its weights. The authors of \cite{NIPS2017_7015} call this method \emph{Sobolev training} and reported reduced generalization errors and better data-efficiency in a \emph{network compression} task (see \cite{hinton2015distilling}) and in application to synthetic gradients (see \cite{jaderberg2017decoupled}). In case of network compression, the approximated function $f$ is a function realized by a possibly very large neural network $\netLarge(\cdot|w)$, that has been trained for some supervised learning task and is learnt by a smaller network $\netSmall$. In contrast to usual supervised learning settings, the approximated function $f(\cdot) =\netLarge(\cdot|w)$ is known and the derivatives can be computed.
	\item Motivated by the performance of deep learning-based solutions in classical machine learning tasks and, in particular, by their ability to overcome the curse of dimension, neural networks are now also applied for the approximative solution of \emph{partial differential equations (PDEs)} (see \cite{lagaris1998artificial, weinan2018deep, han2018solving, sirignano2018dgm}). 

		In \cite{sirignano2018dgm} the authors present their \emph{deep Galerkin method} for approximating solutions of high-di\-men\-sion\-al quasilinear parabolic PDEs. For this, a functional $J(f)$ encoding the differential operator, boundary conditions, and initial conditions is introduced. A neural network $\Ncal_{\text{PDE}}$ with weights $w$ is then trained to minimize the functional $J(\Ncal_{\text{PDE}}(w))$. This is done by a discretization and randomly sampling spatial points.
\end{itemize}
The theoretical foundation for approximating a function and higher-order derivatives with a neural network was already given in a less known version of the universal approximation theorem by Hornik in \cite[Theorem~3]{hornik1991approximation}. In particular, it was shown that if the activation function $\rho$ is $k$-times continuously differentiable, non-constant, and bounded, then any $k$-times continuously differentiable function $f$ and its derivatives up to order $k$ can be uniformly approximated by a shallow neural network on compact sets. Note though that the conditions on the activation function are very restrictive and that, for example, the ReLU is not included in the above result. However, in \cite{NIPS2017_7015}, it was shown that the theorem also holds for shallow ReLU networks if $k=1$. Theorem~3 in \cite{hornik1991approximation} was also used in \cite{sirignano2018dgm} to show the existence of a shallow network approximating solutions of the PDEs considered in this paper.

An important aspect, that is untouched by the previous approximation results is how the complexity of a network and, in particular, its depth relates to its approximation properties.

\subsection{Our contribution}
In Theorem~1 in~\cite{yarotsky2017error}, Yarotsky showed upper complexity bounds for approximations in $L^\infty$ norm of functions from the Sobolev space $\Winfnc$ with neural networks for continuous piecewise linear activation functions with a finite number of breakpoints. Precisely, it is shown there, that for any $\eps>0$ there exists a neural network $\Ncal_{\eps}$ with at most $c\cdot\log_2(\nicefrac{1}{\eps})$ layers and at most $c \cdot\eps^{-d/n}\log_2(\nicefrac{1}{\eps})$ weights and neurons such that for any $f\in\Winfnc$ with $\norm{f}_{\Winfnc}\leq 1$ there is a choice of weights $w_f$ with  
\[
	\norm{\Ncal_{\eps}(\cdot|w_f)-f(\cdot)}_{\Linfc}\leq \eps,
	\]
where $c$ is a constant depending on $d$ and $n$. 

Furthermore, under the assumption of a possibly discontinuous weight selection the number of weights needed by a neural network $\Ncal_\eps$ to be able to realize an $\eps$-approximation in $L^\infty$ norm for any $f\in\Winfnc$ with $\norm{f}_{\Winfnc}\leq 1$ is shown to be lower bounded by $c'\cdot\eps^{d/(2n)}$ in \cite[Theorem~4~a)]{yarotsky2017error}. The constant $c'$ depends on $d$ and $n$.

We show for the same set of activation functions that the approximation can also be done with respect to higher-order Sobolev norms with arbitrary $1\leq p\leq\infty$ and that there is a trade-off between the regularity used in the approximation norm and the regularity used in the complexity bounds. Specifically, we show that for any approximation accuracy $\eps>0$ and regularity $0\leq s\leq 1$, there is a ReLU neural network $\Ncal_{\eps}$ with at most $c\cdot\log_2(\eps^{-n/(n-s)})$ layers and $c\cdot\eps^{-d/(n-s)}\cdot\log_2(\eps^{-n/(n-s)})$ weights and neurons such that and any $f\in\Wkp[n][p][\cube^d]$ with $\norm{f}_{\Wkp[n][p][\cube^d]}\leq B$ there is a choice of weights $w_f$ with
	\[
		\norm{\Ncal_{\eps}(\cdot | w_f) - f(\cdot)}_{\Wkp[s][p][\cube^d]}\leq \eps,
		\]
		where $c$ is a constant depending on $d,n,p,s$, and $B$. In the boundary case $s=0$ and $p=\infty$ our results corresponds to the theorem shown by Yarotsky and for $s=1$ and $p=\infty$ the function $\Ncal_{\eps}(\cdot|w_f)$ and its weak gradient uniformly approximate $f$ and the weak gradient of $f$, respectively. For non-integer $s$ the function $\Ncal_{\eps}(\cdot | w_f)$ approximates the function $f$ and its fractional derivative of order $s$ approximates the fractional derivative of order $s$ of $f$. The case $0\leq s\leq 1$ and $p=\infty$ was already shown by one of the authors, I.\ G{\"u}hring, in his Master thesis~\cite{masterthesis}.

Moreover, analogously to \cite[Theorem~4~a)]{yarotsky2017error} we establish lower bounds where the same regularity-complexity trade-off can be observed. We show that if a neural network $\Ncal_\eps$ is able to realize an $\eps$-approximation in $W^{1,\infty}$ norm for any $f\in\Winfnc$ with $\norm{f}_{\Winfnc}\leq 1$, then the number of weights of $\Ncal_\eps$ is lower bounded by $c' \cdot \eps^{\nicefrac{-d}{2(n-1)}}$. Here, $c'$ is a constant depending on $d$ and $n$.

\subsection{Outline}
As a preparation, we start by introducing notation and some definitions in~Section~\ref{sec:prelim}. In~Section~\ref{sec:network}, we rigorously define a neural network and its architecture in mathematical terms and develop a network calculus. In~Section~\ref{sec:sobolev}, we briefly introduce (fractional) Sobolev spaces. In~Section~\ref{sec:main}, we present our results which will be discussed in~Section~\ref{sec:discussion}. To allow a concise presentation of our results most proofs are given in the appendix: Appendix~\ref{app:interpolation} and~\ref{app:sobolev} contain the necessary preparation for the proof of the results from~Section~\ref{subsec:upper_bounds} which is given in~Appendix~\ref{app:upper_bounds}. The results from~Section~\ref{subsec:lower_bounds} are proven in~Appendix~\ref{app:lower_bounds}.

\subsection{Notation}\label{sec:prelim}
We set $\N:=\{1,2,\ldots\}$ and $\N_0:= \N\cup \{0\}$. For $k\in\N_0$ the set $\N_{\geq k}:=\{k,k+1,\ldots\}$ consists of all natural numbers larger than or equal to $k$. For a set $A$ we denote its cardinality by $\lvert A\rvert\in\N\cup\{\infty\}$. If $x\in\R$, then we write $\ceil{x}:=\min\{k\in\Z:k\geq x\}$ where $\Z$ is the set of integers.

If $d\in\N$ and $\norm{\cdot}$ is a norm on $\R^d$, then we denote for $x\in\R^d$ and $r>0$ by $B_{r,\norm{\cdot}}(x)$ the open ball around $x$ in $\R^d$ with radius $r$, where the distance is measured in $\norm{\cdot}$. By $\pabs{x}$ we denote the euclidean norm of $x$ and by $\norm{x}_{\ell^\infty}$ the maximum norm.

We endow $\R^d$ with the standard topology and for $A\subset \R^d$ we denote by $\overline{A}$ the closure of $A$ and by $\partial A$ the boundary of $A$. For the convex hull of $A$ we write $\co A$. The diameter of a non-empty set $A\subset\R^d$ is always taken with respect to the euclidean distance, i.e. $\diam A:=\diam_{\pabs{\cdot}}A:=\sup_{x,y\in A}\pabs{x-y}$.
If $A,B\subset\R^d$, then we write $A\ksub B$ if $\overline{A}$ is compact in $B$.

For $d_1,d_2\in\N$ and a matrix $A\in\R^{d_1,d_2}$ the number of nonzero entries of $A$ is counted by $\norm{\cdot}_{\ell^0}$, i.e.\
	\[
		\norm{A}_{\ell^0}:=\pabs*{\{(i,j): A_{i,j}\neq 0\}}.
	\]
If $d_1,d_2,d_3\in\N$ and $A\in\R^{d_1,d_2},B\in\R^{d_1,d_3}$, then we use the common block matrix notation and write for the horizontal concatenation of $A$ and $B$ 
\[
\bmat{c c}{A & B} \in\R^{d_1,d_2+d_3}\quad\text{or}\quad \bmat{c|c}{A & B}\in\R^{d_1,d_2+d_3}
\]
where the second notation is used if a stronger delineation between different blocks is necessary. A similar notation is used for the vertical concatenation of $A\in\R^{d_1,d_2}$ and $B\in\R^{d_3,d_2}$.

Of course, the same notation also applies to (block) vectors.

For a function $f:X\to\R$ we denote by $\supp f$ the support of $f$. If $f:X\to Y$ and $g:Y\to Z$ are two functions, then we write $g\circ f:X\to Z$ for their composition.

We use the usual \emph{multiindex} notation, i.e.\ for $\alpha\in\N_0^d$ we write $\pabs{\alpha}:=\alpha_1+\ldots + \alpha_d$ and $\alpha!:=\alpha_1!\cdot\ldots\cdot \alpha_d!$. Moreover, if $x\in\R^d$, then we have
	\[
		x^\alpha:=\prod_{i=1}^d x_i^{\alpha_i}.
		\]

Let $\Omega\subset\R^d$ be open. For $n\in\N_0 \cup \{\infty\}$, we denote by $C^n(\Omega)$ the set of $n$ times continuously differentiable functions on $\Omega$. The space of \emph{test functions} is 
\[
	\Ctest:=\left\{f\in C^\infty(\Omega)\;\vert\; \supp f\ksub\Omega \right\}.
	\]
 For $f\in C^n(\Omega)$ and $\alpha\in\N_0^d$ with $\pabs{\alpha}\leq n$ we write
\[
	D^\alpha f:=\frac{\partial^{\pabs{\alpha}}f}{\partial x_1^{\alpha_1} \partial x_2^{\alpha_2}\cdots \partial x_d^{\alpha_d}}.
\]

We say that a function $f:\Omega\to\R^m$ for $\Omega\subset\R^d$ is \emph{Lipschitz continuous} if there is a constant $L>0$ such that 
\[
	\pabs{f(x)-f(y)}\leq L \pabs{x-y}
	\]
	for all $x,y\in\Omega$.  If we want to emphasize the constant $L$, then we say that $f$ is \emph{$L$-Lipschitz}. 

	If $X$ is a linear space and $\norm{\cdot}_1$ and $\norm{\cdot}_2$ are two norms on $X$, then we say that $\norm{\cdot}_1$ and $\norm{\cdot}_2$ are \emph{equivalent} if there exist constants $C_1,C_2>0$ such that 
	\[
		C_1\norm{x}_1	\leq \norm{x}_2\leq C_2 \norm{x}_1\quad\text{for all }x\in X.
		\] 
		For two normed linear spaces $X,Y$ we denote by $\Lcal(X,Y)$ the set of bounded linear operators mapping $X$ to $Y$ and for $T\in\Lcal(X,Y)$ the induced operator norm of $T$ is denoted by 
	\[
		\norm{T}_{\Lcal(X,Y)}:=\sup\left\{\norm{Tx}_Y:x\in X,\;\norm{x}_X\leq 1\right\}.
		\]

		Let $a>0$, then we say for two functions $f:(0,a)\to [0,\infty)$ and $g:(0,a)\to[0,\infty)$ that $f(\eps)$ is in $\bigO(g(\eps))$ if there exists $0<\delta<a$ and $C>0$ such that $f(\eps)\leq C g(\eps)$ for all $\eps\in (0,\delta)$.

\section{Neural networks}\label{sec:network}
In this section, we introduce the notion of neural networks used in this paper. As in \cite{yarotsky2017error}, we will consider a general type of feedforward architecture that also allows for connections of neurons in non-neighboring layers. It can be seen, though, that any function realized by such a network can also be realized by a network with a more restrictive feedforward architecture where only neurons from neighboring layers can be connected (Lemma \ref{lemma:neural_nets_dense_to_standard}). As in \cite{petersen2017optimal}, we draw a distinction between the neural network and the function that the network realizes. This gives us the possibility to develop a network calculus in the spirit of~\cite[Chapter~2]{petersen2017optimal}. As in that paper we will introduce the notion of network concatenation and parallelization.

The following definition is similar to \cite[Definition 2.1]{petersen2017optimal}, where the difference is that we also allow connections between non-neighboring layers.
\begin{definition}\label{def:nn}
	Let $d,L\in\N$. A \emph{neural network $\Phi$ with input dimension $d$ and $L$ layers} is a sequence of matrix-vector tuples
	\[
		\Phi=((A_1,b_1),(A_2,b_2),\dots,(A_L,b_L)),
		\]
		where $N_0=d$ and $N_1,\ldots,N_L\in\N$, and where each $A_l$ is an $N_l\times \sum_{k=0}^{l-1} N_k$ matrix, and $b_l\in\R^{N_l}$.

	If $\Phi$ is a neural network as above, and if $\rho:\R\to\R$ is arbitrary, then we define the associated \emph{realization of $\Phi$ with activation function $\rho$} as the map $\act{\Phi}:\R^d\to\R^{N_L}$ such that 
		\[
			\act{\Phi}(x)=x_L,
			\]
			where $x_L$ results from the following scheme:
			\begin{align*}
				&x_0:=x,\\
				&x_l:=\rho\left(A_l\bmat{c|c|c}{x_0^T & \ldots &  x_{l-1}^T}^T+b_l\right), \quad\text{for }l=1,\ldots L-1,\\
				&x_L:=A_L \bmat{c|c|c}{x_0^T & \ldots &  x_{L-1}^T}^T+b_L,
			\end{align*}
			where $\rho$ acts componentwise, i.e., $\rho(y)=[\rho(y^1),\ldots,\rho(y^m)]$ for $y=[y^1,\ldots,y^m]\in\R^m$. We sometimes write $A_l$ in block-matrix form as
			\[
				A_l=\bmat{c|c|c}{
					A_{l,x_0} & \ldots & A_{l,x_{l-1}}
					},
				\]
				where $A_{l,x_k}$ is an $N_l \times N_k$ matrix for $k=0,\ldots,l-1$ and $l=1,\ldots,L$. Then
			\begin{align*}
				&x_l=\rho\left(A_{l,x_0} x_0+\ldots+ A_{l,x_{l-1}} x_{l-1}+b_l\right), \quad\text{for }l=1,\ldots L-1,\\
				&x_L=A_{L,x_0} x_0+\ldots+ A_{L,x_{L-1}} x_{L-1}+b_L.
			\end{align*}

We call $N(\Phi):=d+\sum_{j=1}^L N_j$ the \emph{number of neurons} of the network $\Phi$, $L=L(\Phi)$ the \emph{number of layers}, and finally $M(\Phi):=\sum_{j=1}^L (\norm{A_j}_{\ell^0}+\norm{b_j}_{\ell^0})$ denotes the number of nonzero entries of all $A_l,b_l$ which we call the \emph{number of weights} of $\Phi$. Moreover, we refer to $N_L$ as the \emph{dimension of the output layer} of $\Phi$.
\end{definition}

We will now define the class of neural networks where only connections between neighboring layers are allowed. Networks without skip connections are a special case of the networks of Definition~\ref{def:nn}. Since they are more frequently used in the literature, we coin such networks \emph{standard neural networks}.
\begin{definition}\label{def:nn_standard_nn}
	If $\Phi=((A_1,b_1),(A_2,b_2),\dots,(A_L,b_L))$ is a neural network as above and we have for 
			\[
				A_l=\bmat{c|c|c}{
					A_{l,x_0} & \ldots & A_{l,x_{l-1}}
					},
				\] that $A_{l,x_i}=0$ for $l=1,\ldots,L$ and $i=0,\ldots,l-2$, then
				we call $\Phi$ a \emph{standard neural network}. The computation scheme then reduces to the following:
			\begin{align*}
				&x_0:=x,\\
				&x_l:=\rho\left(A_{l,x_{l-1}}x_{l-1}+b_l\right), \quad\text{for }l=1,\ldots L-1,\\
				&x_L:=A_{L,x_{L-1}}  x_{L-1}+b_L.
			\end{align*}
\end{definition}

In practice, before training a neural network, i.e.\ adjusting the weights of the network, one has to decide which network architecture to use. The following definition will clarify the notion of a network architecture.

\begin{definition}
	Let $d,L\in\N$. A \emph{neural network architecture $\Arch$ with input dimension $d$ and $L$ layers} is a sequence of matrix-vector tuples
	\[
		\Arch=((A_1,b_1),(A_2,b_2),\dots,(A_L,b_L)),
		\]
		where $N_0=d$ and $N_1,\ldots,N_L\in\N$, and where each $A_l$ is an $N_l\times \sum_{k=0}^{l-1} N_k$ matrix, and $b_l$ a vector of length $N_l$ with elements in $\{0,1\}$. So a neural network architecture is simply a neural network with binary weights.
		
We call $N(\Arch):=d+\sum_{j=1}^L N_j$ the \emph{number of neurons} of the architecture $\Arch$, $L=L(\Arch)$ the \emph{number of layers}, and finally $M(\Arch):=\sum_{j=1}^L (\norm{A_j}_{\ell^0}+\norm{b_j}_{\ell^0})$ denotes the number of nonzero entries of all $A_l,b_l$ which we call the \emph{number of weights} of $\Arch$. Moreover, we refer to $N_L$ as the \emph{dimension of the output layer} of $\Arch$.

We say that a \emph{neural network $\Phi=((A'_1,b'_1),(A'_2,b'_2),\dots,(A'_L,b'_L))$ with input dimension $d$ and $L$ layers has architecture $\Arch$} if 
	\begin{enumerate}[(i)]
		\item $N_l(\Phi)=N_l$ for all $l=1,\ldots,L$ and
		\item $[A'_l]_{i,j}\neq 0$ implies $[A_l]_{i,j}\neq 0$ for all $l=1,\ldots,L$ with $i=1,\ldots,N_l$ and $j=1,\ldots,\sum_{k=0}^{l-1}N_k$.
	\end{enumerate}

	In the spirit of Definition~\ref{def:nn_standard_nn} we say that $\Arch$ is a \emph{standard neural network architecture} if all weights connecting neurons from non-neighboring layers are zero.

	By fixing an ordering of the nonzero entries in $A_l$ for $l=1,\ldots,L$ we can uniquely identify a vector $w\in\R^{M(\Arch)}$ with a (standard) neural network that has (standard) architecture $\Arch$. We denote this (standard) neural network by $\Arch(w)$.
\end{definition}
Note that in general an architecture of a neural network does not need to be unique.

Moreover, we define one of the in practice commonly used (see \cite{lecun2015deep}) activation functions. 

\begin{definition}\label{def:nn_ReLU}
	The function
	\[
		\rho:\R\to\R,\quad x\mapsto\max(0,x),
		\]
		is called \emph{ReLU} (Rectified Linear Unit) \emph{activation function}.
\end{definition}
%
%
It can be shown, see~\cite[Proposition 1]{yarotsky2017error}, that in terms of approximation and upper complexity bounds for networks with continuous and piecewise linear activation functions it suffices to focus on the ReLU.

\subsection{Network concatenation}
In this subsection we introduce the notion of concatenation of two networks which allows us to realize compositions of network realizations. In detail, if $\Phi^1$ and $\Phi^2$ are two networks, then we are interested in a network $\Phi^1\bullet\Phi^2$ such that $\act{\Phi^1\bullet\Phi^2}(x)=\act{\Phi^1}\circ\act{\Phi^2}(x)$ for all $x\in\R^d$. We will start by constructing a network $\Phi^1\bullet\Phi^2$ for general activation functions $\rho$. In the case where the activation function is the ReLU we use the fact that it is possible to find a network which realizes the identity function and construct a so called \emph{sparse concatenation} which gives us control over the complexity of the resulting network.

\begin{definitionlemma}
	Let $L_1,L_2\in\N$ and let 
	\[
		\Phi^1=((A^1_1,b^1_1),\dots,(A^1_L,b^1_L)), \quad\Phi^2=((A^2_1,b^2_1),\dots,(A^2_L,b^2_L))
		\]
		be two neural networks such that the input layer of $\Phi^1$ has the same dimension as the output layer of $\Phi^2$. Then, $\Phi^1\bullet\Phi^2$ denotes the following $L_1+L_2-1$ layer network:
	\[
		\Phi^1\bullet\Phi^2:=\left((A^2_1,b^2_1),\dots,(A^2_{L_2-1},b^2_{L_2-1}),(\tilde{A}_1^1,\tilde{b}_1^1),\ldots,(\tilde{A}_{L_1}^1,\tilde{b}_{L_1}^1)\right),
		\]
		where 
		\[
			\tilde{A}_l^1:=\bmat{c|c|c|c}{A_{l,x_0}^1 A_{L_2}^2 & A_{l,x_1}^1 & \dots & A_{l,x_{l-1}}^1}\quad\text{and}\quad\tilde{b}_l^1:=A_{l,x_0}^1 b_{L_2}^2+b_l^1,
			\]
		for $l=1,\ldots,L_1$. We call $\Phi^1\bullet\Phi^2$ the \emph{concatenation of $\Phi^1$ and $\Phi^2$}.

Then, $\act{\Phi^1\bullet\Phi^2}=\act{\Phi^1}\circ\act{\Phi^2}$.
\end{definitionlemma}

\begin{proof}
	This can be verified with a direct computation.
\end{proof}

Next, we adapt \cite[Lemma 2.3]{petersen2017optimal} to the case where skip connections are allowed and define a network which realizes an identity function if the activation function is the ReLU.
\begin{lemma}\label{lemma:neual_Id}
	Let $\rho:\R\to\R$ be the ReLU and $d\in\N$. We define
	\[
		\Phid:=((A_1,b_1),(A_2,b_2))	
		\]
		with
		\[
			A_1:=
			\bmat{c}{
				\id_{\R^d}\\[1em]
				-\id_{\R^d}
			},\quad
			b_1:=0,\quad
			A_2:=\bmat{c|c|c}{0_{\R^{d,d}} & \id_{\R^d} & -\id_{\R^d}},\quad
			b_2:=0.
			\]
			Then $\act{\Phid}=\id_{\R^d}$.
\end{lemma}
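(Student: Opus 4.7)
The plan is to unwind the realization scheme of Definition~\ref{def:nn} on the two-layer network $\Phid$ and verify the claim by direct computation, leveraging the identity $\rho(t) - \rho(-t) = \max(0,t) - \max(0,-t) = t$, which holds componentwise for the ReLU.

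First I would fix an arbitrary input $x \in \R^d$ and set $x_0 := x$. Applying the first layer, I obtain
\[
x_1 = \rho(A_1 x_0 + b_1) = \rho\!\left(\bmat{c}{\id_{\R^d} \\ -\id_{\R^d}} x\right) = \bmat{c}{\rho(x) \\ \rho(-x)} \in \R^{2d},
\]
where $\rho$ is applied componentwise. Note that the block structure of $A_1$ is compatible with Definition~\ref{def:nn}, since at layer $1$ the only preceding activation vector is $x_0 \in \R^d$.

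Next I would apply the second (output) layer. Since $\Phid$ has $L=2$ layers, no activation function is applied, and
\[
R_\rho(\Phid)(x) = x_2 = A_2 \bmat{c|c}{x_0^T & x_1^T}^T + b_2 = \bmat{c|c|c}{0_{\R^{d,d}} & \id_{\R^d} & -\id_{\R^d}} \bmat{c}{x \\ \rho(x) \\ \rho(-x)} = \rho(x) - \rho(-x).
\]
By the ReLU identity $\rho(t) - \rho(-t) = t$ applied componentwise, this equals $x$, so $R_\rho(\Phid) = \id_{\R^d}$.

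There is no real obstacle here: the lemma is a bookkeeping verification, and the only substantive ingredient is that the positive and negative parts $\rho(x)$ and $\rho(-x)$ of a real vector reconstruct $x$ via subtraction. The only thing to be mildly careful about is matching the block-matrix conventions of Definition~\ref{def:nn}, in particular that $A_2$ has a zero block of width $d$ corresponding to the skip connection from $x_0$, and blocks of width $d$ corresponding to the two halves of $x_1 \in \R^{2d}$.
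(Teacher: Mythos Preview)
Your proof is correct and is exactly the direct verification the paper has in mind; in fact the paper does not even spell out a proof of this lemma, treating the identity $\rho(t)-\rho(-t)=t$ and the resulting computation as immediate. Your careful matching of the block structure of $A_2$ to the skip-connection convention of Definition~\ref{def:nn} is the only point requiring attention, and you handled it correctly.
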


Using the identity network $\Phid$ from the previous lemma we can now (as in \cite[Definition~2.5]{petersen2017optimal}) define an alternative network concatenation which allows us to control the complexity of the resulting network for the case of the ReLU activation function.
\begin{definition}
	Let $\rho:\R\to\R$ be the ReLU, let $L_1,L_2\in\N$, and let 
	\[
		\Phi^1=((A^1_1,b^1_1),\dots,(A^1_{L_1},b^1_{L_1})),\quad \Phi^2=((A^2_1,b^2_1),\dots,(A^2_{L_2},b^2_{L_2}))
		\]be two neural networks such that the input dimension of $\Phi^1$ is equal to the output dimension of $\Phi^2$. Let $\Phid$ be as in Lemma \ref{lemma:neual_Id}.
	
	Then the \emph{sparse concatenation of $\Phi^1$ and $\Phi^2$} is defined as
	\[
		\Phi^1\odot\Phi^2:=\Phi^1\bullet\Phid\bullet\Phi^2.
		\]
\end{definition}

\begin{remark}\label{remark:neural_sparse_concat}
		We have
		\[
			\Phi^1\odot\Phi^2=\left((A_1^2,b_1^2),\ldots,(A_{L_2-1}^2,b_{L_2-1}^2),\left(
			\bmat{c}{
				A_{L_2}^2\\[1em]
				-A_{L_2}^2
			},
			\bmat{c}{
				b_{L_2}^2\\[1em]
				-b_{L_2}^2
			}\right),
			(\hat{A}_1^1,b_1^1),\ldots (\hat{A}_{L_1}^1,b_{L_1}^1)
			\right),
			\]
			where 
			\[
				\hat{A}_l^1:=\bmat{c|c|c|c|c|c}{0_{N_l^1,\sum_{k=0}^{L_2-1}N_k^2} & A_{l,x_0}^1 & -A_{l,x_0}^1 & A_{l,x_1}^1 & \ldots & A_{l,x_{l-1}}^1},
				\]
				for $l=1,\ldots,L_1$. Furthermore, it holds that $L(\Phi^1\odot\Phi^2)=L_1+L_2$, and $M(\Phi^1\odot\Phi^2)\leq 2M(\Phi^1)+2M(\Phi^2)$ and $N(\Phi^1\odot\Phi^2)\leq 2N(\Phi^1)+2N(\Phi^2)$.
\end{remark}

\subsection{Network parallelization}
In some cases it is useful to combine several networks into one larger network. In particular, if $\Phi^1$ and $\Phi^2$ are two networks, then we are looking for a network $P(\Phi^1,\Phi^2)$ such that $\act{P(\Phi^1,\Phi^2)}(x)=(\act{\Phi^1}(x),\act{\Phi^2}(x))$ for all $x\in\R^d$.

\begin{definitionlemma}\label{deflemma:parallelization}
	Let $L_1,L_2\in\N$ and let 
	\[
	\Phi^1=((A^1_1,b^1_1),\dots,(A^1_{L_1},b^1_{L_1})), \quad\Phi^2=((A^2_1,b^2_1),\dots,(A^2_{L_2},b^2_{L_2}))
	\]
	be two neural networks with $d$-dimensional input. If $L_1\leq L_2$, then we define
	\[
		P(\Phi^1,\Phi^2):=\left(\left(\tilde A^1_1,\tilde b^1_1\right),\dots,\left(\tilde A^1_{L_2},\tilde b^1_{L_2}\right)\right),
		\]
		where
		\[
			\tilde A_l:=
			\left[
			\begin{array}{c|cc|c|cc}
				A_{l,x_0}^1  & A_{l,x_1}^1 & 0 & \multirow{2}{*}[-.5em]{$\ldots$} & A_{l,x_{l-1}}^1 & 0\\[1em]
				A_{l,x_0}^2 & 0 & A_{l,x_1}^2 &  & 0& A_{l,x_{l-1}}^2
			\end{array}
			\right],\quad			
			\tilde b_l:=
			\bmat{c}{
				 b_l^1\\[1em]
				 b_l^2
			}\quad\text{for }1\leq l< L_1,
			\]
			and
			\[
			\tilde A_l:=
			\bmat{c|cc|c|cc|c|c|c}{
				A_{l,x_0}^2 & 0 & A_{l,x_1}^2 & \ldots & 0 & A_{l,x_{L_1-1}}^2 & A_{l,x_{L_1}}^2 & \ldots & A_{l,x_{l-1}}^2
			},\quad			
			\tilde b_l:=b_l^2
				\]
				for $L_1\leq l< L_2$ and
		\[
			\tilde A_{L_2}:=
			\left[
				\begin{array}{c|cc|c|cc|c|c|c}
					A_{L_1,x_0}^1 & A_{L_1,x_1}^1 & 0 & \multirow{2}{*}[-.5em]{$\ldots$} & A_{L_1,x_{L_1-1}}^1 & 0 & 0 &\multirow{2}{*}[-.5em]{$\ldots$} & 0\\[1em]
					A_{L_2,x_0}^2 & 0 & A_{L_2,x_1}^2 && 0 & A_{L_2,x_{L_1-1}}^2 & A_{L_2,x_{L_1}} && A_{L_2,x_{L_2-1}}\\
				\end{array}
			\right],
			\]
			\[
				\tilde b_{L_2}:=
			\bmat{c}{
				b_{L_1}^1\\[1em]
				 b_{L_2}^2
			}.
			\]

		A similar construction is used for the case $L_1>L_2$. Then $P(\Phi^1,\Phi^2)$ is a neural network with $d$-dimensional input and $\max\{L_1,L_2\}$ layers, which we call the \emph{parallelization of $\Phi^1$ and $\Phi^2$}. We have $M(P(\Phi^1,\Phi^2))=M(\Phi^1)+M(\Phi^2)$, $N(P(\Phi^1,\Phi^2))=N(\Phi^1)+N(\Phi^2)-d$ and $\act{P(\Phi^1,\Phi^2)}(x)=(\act{\Phi^1}(x),\act{\Phi^2}(x))$. 
\end{definitionlemma}

\begin{remark}
The previous Definition~\ref{deflemma:parallelization} can be generalized to a parallelization of an arbitrary number of neural networks. Let $n\in\N$ and, for $i=1,\ldots,n$, let $\Phi^i$ be a neural network with $d$-dimensional input and $L_i$ layers. The network
	\[
		P(\Phi^i:i=1,\ldots,n):=P(\Phi^1,\ldots,\Phi^n):=P(\Phi^1,P(\Phi^2,P(\ldots,P(\Phi^{n-1},\Phi^n)\ldots)))
		\]
		with $d$-dimensional input and $L=\max\{L_1,\ldots,L_n\}$ layers is called the \emph{parallelization of $\Phi^1,\Phi^2,\ldots,\Phi^n$}. We have 
		\[
			M(P(\Phi^1,\ldots,\Phi^n))=\sum_{i=1}^n M(\Phi^i) \quad\text{and}\quad N(P(\Phi^1,\ldots,\Phi^n))=\sum_{i=1}^n N(\Phi^i)-(n-1)d,
			\]
			and $\act{P(\Phi^1,\ldots,\Phi^n)}(x)=(\act{\Phi^1}(x),\ldots,\act{\Phi^n}(x))$. 
\end{remark}

\subsection{Standard neural networks}
In this subsection, we will show that if a function is a realization of a neural network, then it can also be realized by a standard neural network with similar complexity. The proof of this result is heavily based on the idea of the construction of an identity function with ReLU networks (Lemma \ref{lemma:neual_Id}).
\begin{lemma}\label{lemma:neural_nets_dense_to_standard}
	Let $\rho$ be the ReLU. Then there exist absolute constants $C_1,C_2>0$ such that for any neural network $\Phi$ with $d$-dimensional input and $L=L(\Phi)$ layers, $N=N(\Phi)$ neurons and $M=M(\Phi)$ nonzero weights, there is a standard neural network $\Phist$ with $d$-dimensional input, and $L$ layers, at most  $C_1 L N$ neurons and $C_2\cdot(LN+M)$ nonzero weights such that 
	\[
		\act{\Phi}(x)=\act{\Phist}(x)
		\]
		for all $x\in\R^d$.
\end{lemma}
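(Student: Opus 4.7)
The plan is to construct $\Phist$ so that, for each $l=1,\ldots,L-1$, the output of its $l$-th layer contains \emph{all} data needed to form the $(l+1)$-st hidden vector of $\Phi$'s computation scheme. Concretely, I will arrange that this output equals the block vector
\[
  \bigl(\rho(x_0),\,\rho(-x_0),\,x_1,\,x_2,\,\ldots,\,x_l\bigr),
\]
where $x_0,\ldots,x_l$ are the vectors from Definition~\ref{def:nn} applied to $\Phi$. The identity $x_0=\rho(x_0)-\rho(-x_0)$ (cf.\ Lemma~\ref{lemma:neual_Id}) then lets any later layer reconstruct $x_0$ by an affine map, and the remaining blocks $x_1,\ldots,x_l$ are themselves outputs of $\rho$, hence componentwise non-negative, so they satisfy $\rho(x_k)=x_k$ and can be transported forward through a $\rho$-layer by the identity matrix. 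This is the key point that makes a purely layer-to-layer (standard) architecture sufficient.

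With this invariant, constructing layer $l+1$ of $\Phist$ is a routine block-matrix assembly. From the previous layer's state I (i) copy $\rho(x_0),\rho(-x_0),x_1,\ldots,x_{l-1}$ forward via diagonal identity blocks (these stay invariant under $\rho$), and (ii) compute the new block
\[
  x_{l+1}=\rho\!\left(A_{l+1,x_0}\bigl(\rho(x_0)-\rho(-x_0)\bigr)+\sum_{k=1}^{l} A_{l+1,x_k} x_k + b_{l+1}\right).
\]
The first layer is the special case producing $(\rho(x_0),\rho(-x_0),x_1)$ from $x_0$ via a single $\rho$-activated affine map with weight matrix vertically stacking $\id_{\R^d},-\id_{\R^d},A_{1,x_0}$ and bias $(0,0,b_1)$; the final layer is simply the affine map $A_L[x_0^T|\ldots|x_{L-1}^T]^T+b_L$ rewritten in terms of $\rho(x_0)-\rho(-x_0)$ and $x_1,\ldots,x_{L-1}$, with no $\rho$ applied. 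A direct induction then gives $\act{\Phist}=\act{\Phi}$.

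For the complexity bounds I would count as follows. The width of the $l$-th layer of $\Phist$ is at most $2d+\sum_{k=1}^{l}N_k\le 2N(\Phi)$, so summing over $L$ layers yields $N(\Phist)\le 2L\,N$, giving $C_1=2$. For the weight count, the identity copy-forward in layer $l$ contributes at most one nonzero per copied neuron, i.e.\ at most $2N(\Phi)$ nonzeros per layer, hence at most $2LN$ in total. The "fresh" computation of $x_{l+1}$ contributes at most $2\|A_{l+1,x_0}\|_{\ell^0}+\sum_{k=1}^{l}\|A_{l+1,x_k}\|_{\ell^0}+\|b_{l+1}\|_{\ell^0}\le 2\|A_{l+1}\|_{\ell^0}+\|b_{l+1}\|_{\ell^0}$ nonzeros (the factor $2$ accounting for the duplication of $A_{l+1,x_0}$ against $\rho(x_0)$ and $\rho(-x_0)$); summed over $l$ this is bounded by $2M(\Phi)$. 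Thus $M(\Phist)\le 2LN+2M$, giving $C_2=2$.

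The only subtle point is isolating the distinct role of $x_0$, which is the sole vector in the scheme that is not a priori non-negative, and therefore the only one that must be carried as a pair $(\rho(x_0),\rho(-x_0))$; all other intermediate $x_k$ are already fixed points of $\rho$ and propagate for free. Once this observation is in place, the remainder is bookkeeping of block matrices and a straightforward induction on the layer index, so I do not expect any real obstacle beyond keeping the indexing consistent with Definition~\ref{def:nn}.
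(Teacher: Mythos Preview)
Your proposal is correct and essentially identical to the paper's own proof: both carry the state $(\rho(x_0),\rho(-x_0),x_1,\ldots,x_l)$ through the layers, use $x_0=\rho(x_0)-\rho(-x_0)$ to reconstruct the input, exploit $\rho(x_k)=x_k$ for $k\ge 1$ to propagate the hidden vectors via identity blocks, and arrive at the same constants $C_1=C_2=2$.
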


%

\begin{proof}
	If $L=1$, then there is nothing to show. For $L>1$ we start by defining the matrices
	\[
		A^{\text{st}}_{1,x_0}:=\bmat{c}{\id_{\R^{N_0}}\\[1em]
						 -\id_{\R^{N_0}}\\[1em]
						 A_{1,x_0}},\quad
						 b^{\text{st}}_0:=\bmat{c}{0\\[1em]
						 0\\[1em]
						 b_0}
		\]
		and
	\[
		A^{\text{st}}_{l,x_{l-1}}:=	\left[
			\begin{array}{ ccccc }
				\multicolumn{2}{c}{\id_{\R^{2N_0}}} & \multicolumn{3}{c}{0}\\[1em]
				\multicolumn{2}{c}{0} & \multicolumn{3}{c}{\id_{\R^{\sum_{k=1}^{l-1}N_k}}}\\[1em]
				A_{l,x_0} & -A_{l,x_0} & A_{l,x_1} & \ldots & A_{l,x_{l-1}}
			\end{array}
			\right],\quad
				b^{\text{st}}_l:=\left[
					\begin{array}{c}
						0\\[1em]
						0\\[1em]
						b_l
					\end{array}
					\right]
				\]
				for $l=1,\ldots,L-1$ and 
				\[
					A^{\text{st}}_{L,x_{L-1}}:=\bmat{ cc|c|c|c}{
						A_{L,x_0} & -A_{L,x_0} & A_{L,x_1} & \ldots & A_{L,x_{L-1}}
						},\quad b_L^{\text{st}}:=b_L.
					\]
		Now we set \[ 
				A^{\text{st}}_l:=\bmat{c|c|c|c}{
					0 & \ldots & 0 & A^{\text{st}}_{l,x_{l-1}}
					},
					\]
					and \[
						\Phist:=((A^{\text{st}}_1,b^{\text{st}}_1),\dots,(A^{\text{st}}_L,b^{\text{st}}_L)).
						\]
						Then $\Phist$ is a standard neural network with $d$-dimensional input and $L$ layers and one can verify that $\act{\Phi}(x)=\act{\Phist}(x)$ for all $x\in\R^d$. For the number of neurons in the $l$-th layer $N^{\text{st}}_l$ we have $N^{\text{st}}_l=2N_0+\sum_{k=1}^l N_l\leq 2 N$. Hence, $N(\Phi_{\text{st}})\leq 2 L N$. To estimate the number of weights, note that $\norm{A^{\text{st}}_l}_{\ell^0}\leq 2 N + 2\norm{A_l}_{\ell^0}$ and $\norm{b^{\text{st}}_l}_{\ell^0}=\norm{b_l}_{\ell^0}$, from which we conclude that $N(\Phi_{\text{st}})\leq 2 (LN+M)$. 
				
\end{proof}

In the next remark we collect some properties of the class of functions that can be realized by a neural network.
\begin{remark}\label{remark:nn_lipschitz}
	If $\Phi$ is a neural network with $d$-dimensional input and $m$-dimensional output and $\rho$ is the ReLU, then $\act{\Phi}$ is a Lipschitz-continuous, piecewise affine-linear function. This follows easily from Definition \ref{def:nn_standard_nn} and Lemma \ref{lemma:neural_nets_dense_to_standard} which show that $\act{\Phi}$ can be expressed as the composition of Lipschitz-continuous, piecewise affine-linear transformations.

	In \cite[Theorem 2.1]{arora2018understanding} also the converse was shown, i.e.\ every piecewise affine-linear function can be realized by a neural network with the ReLU as activation function.
\end{remark}

\section{Sobolev spaces}\label{sec:sobolev}
Spaces of functions that admit generalized derivatives fulfilling suitable integrability properties are a crucial concept in modern theory of partial differential equations (cf.\ e.g.\ \cite{roubivcek2013nonlinear,evans1998partial,adams1975sobolev}). In order to study properties of PDEs using functional analytic tools, a differential equation is reformulated via a differential operator mapping one function space to another. For a wide range of differential equations the appropriate spaces in this formulation are Sobolev spaces. A historical background of the development of Sobolev spaces in the context of PDEs can be found in \cite[Lecture 1]{tartar2007introduction}. For a detailed treatment of the broad theory of Sobolev spaces we refer the reader to \cite{adams1975sobolev,evans1998partial, brezis2010functional}.

For this entire section, let $d\in\N$ and, if not stated otherwise, $\Omega\subset\R^d$ denote an open subset of $\R^d$. For $p \in [1,\infty]$, we denote by $L^p(\Omega)$ the standard Lebesgue spaces on $\Omega$. The following definition can be found in \cite[Chapter 4]{brezis2010functional}.

\begin{definition}[Sobolev space]\label{def:sobolev_space}
	Let $n\in\N_0$ and $1\leq p\leq\infty$. Then we define
		\[
			\Wkp[n]:=\left\{f\in\Lp: D^\alpha f\in\Lp \text{ for all } \alpha\in\N_0^d\text{ with }\pabs{\alpha}\leq n\right\}.
		\]
			Furthermore, for $f\in\Wkp[n]$ and $1\leq p<\infty$, we define the norm
			\[
				\norm{f}_{\Wkp[n]}:=\left(\sum_{0\leq \pabs{\alpha}\leq n}\norm{D^\alpha f}_{\Lp}^p\right)^{1/p}
				\]
				and 
			\[
				\norm{f}_{\Wkp[n][\infty]}:=\max_{0\leq \pabs{\alpha}\leq n}\norm{D^\alpha f}_{\Linf}.
				\]

\end{definition}

We write $\nabla f :=(D^1 f,\ldots, D^d f)$. Moreover, the space $\Wkp[n]$ equipped with the norm $\norm{\cdot}_{\Wkp[n]}$ is a Banach space. For $f\in\Wkp[n]$ we shall use the notation 
\[
	\norm{f}_{\Wkp[n]}=\norm{f}_{W^{n,p}}=\norm{f(x)}_{\Wkpd[n]{x}}
	\]
	if the domain is clear from the context, or if we want to emphasize the variable $x$ that the function $f$ depends on, respectively. If $n=0$, then the Sobolev space is just a Lebesgue space, i.e.\ $\Wkp[0]=\Lp$.

Many results about function spaces defined on a domain $\Omega$ require $\Omega$ to fulfill certain regularity conditions. Different geometrical conditions and the resulting properties have been intensively studied and can for example be found in \cite{adams1975sobolev}. For our purposes it is enough to focus on the condition introduced in the next definition which can be found in \cite[Appendix C.1]{evans1998partial}.

\begin{definition}[Lipschitz-domain]
	We say that a bounded and open set $\Omega\subset \R^d$ is a \emph{Lipschitz-domain} if for each $x_0\in\partial\Omega$ there exists $r>0$ and a Lipschitz continuous function $g:\R^{d-1}\to\R$ such that 
	\[
		\Omega\cap B_{r,\pabs{\cdot}}(x_0)=\left\{x\in B_{r,\pabs{\cdot}}(x): x_d>g(x_1,\ldots,x_{d-1})\right\},
	\]
	after possibly relabeling and reorienting the coordinate axes. 
\end{definition}
In the sequel, we will only work with convex domains. Moreover, every open, bounded, and convex domain is a Lipschitz domain \cite[Corollary 1.2.2.3]{grisvard1985elliptic}.

\subsection{Fractional Sobolev spaces}\label{subsec:fractional_sobololev}
Fractional Sobolev spaces play an important role in the analysis of partial differential equations. 
In particular, they characterize the regularity of functions from Sobolev spaces defined on a domain $\Omega$ restricted to the boundary $\partial \Omega$ of the domain where the restriction is realized by a so-called trace operator (see e.g.\ \cite{brezis2010functional}). Moreover, a detailed description of various areas of further application is listed in \cite{bucur2016nonlocal}. For a more in-depth analysis of these spaces the interested reader is referred to \cite{lunardi2018interpolation} and \cite{tartar2007introduction}. 

Next, we define fractional-order spaces in terms of an intrinsic norm.
\begin{definition}[Sobolev-Slobodeckij spaces]\label{def:slobodeckij}
	We set for $0<s<1$ and $1\leq p\leq \infty$
		\[
			\Wkp[s]:=\left\{f\in L^p(\Omega): \norm{f}_{\Wkp[s]}<\infty\right\} 
			\]
			with
			\[
				\norm{f}_{\Wkp[s]}:=\left(\norm{f}_{\Lp}^p+\int_\Omega\int_\Omega\left(\frac{\pabs{f(x)-f(y)}}{\pabs{x-y}^{s+d/p}}\right)^p dx dy\right)^{1/p}\quad\text{for }1\leq p<\infty
			\]
			and 
			\[
				\norm{f}_{\Wkp[s][\infty]}:=\max\left\{\norm{f}_{\Lp[\infty]}, \esssup_{x,y\in\Omega}\frac{\pabs{f(x)-f(y)}}{\pabs{x-y}^s}\right\}.
				\]

\end{definition}
The space $\Wkp[s]$ endowed with the norm $\norm{\cdot}_{\Wkp[s]}$ is a Banach space called \emph{Sobolev-Slobodeckij space}.\footnote{For $p=\infty$ the space coincides with the space of bounded $s$-H{\"o}lder continuous functions. Precisely, each equivalence class contains a bounded $s$-H{\"o}lder continuous representative.}

\section{Approximations with deep ReLU neural networks in Sobolev type norms}\label{sec:main}
In this section, we derive upper and lower complexity bounds for approximations of functions from certain Sobolev spaces with deep ReLU networks. Our result is a generalization of~\cite[Theorem~1]{yarotsky2017error} (upper bounds) and~\cite[Theorem~4~a)]{yarotsky2017error} (lower bounds) to the case where the approximation error is measured in a Sobolev-type norm.

From now on we assume that $\rho:\R\to\R$ is the ReLU activation function (cf.\ Definition~\ref{def:nn_ReLU}). We are interested in approximating functions in subsets of the Sobolev space $\Wkp[n][p][\cube^d]$ with realizations of neural networks. For this we define the set:
\[
	\Fndp :=\left\{f\in \Wkp[n][p][\cube^d]:\norm{f}_{\Wkp[n][p][\cube^d]}\leq B\right\}.
	\]

\subsection{Upper complexity bounds}\label{subsec:upper_bounds}
In \cite[Theorem 1]{yarotsky2017error} it is shown that for $B=1$ and an arbitrary function $f\in\Fnd$ a ReLU network can be constructed that realizes an approximation with $L^\infty$-error at most $\eps$ using $\bigO(\log_2(\nicefrac{1}{\eps}))$ layers and $\bigO(\eps^{-d/n}\log_2(\nicefrac{1}{\eps}))$ nonzero weights and neurons.

We will show that the approximation can also be performed with respect to a continuous scale of higher order Sobolev-type norms for arbitrary $1\leq p\leq\infty$ and additionally, that there is a trade-off between the regularity used in the norm in which the approximation error is measured and the regularity used in the bounds. In particular, we will show the following theorem:

\begin{theorem}\label{thm:main}
	Let $d\in \N$, $n\in\N_{\geq 2}$, $1\leq p\leq \infty$, $B>0$, and $0\leq s \leq 1$. Then, there exists a constant $c=c(d,n,p,B,s)>0$ with the following properties:
	
	For any $\epsilon \in \epsin$, there is a neural network architecture $\Arch_\eps=\Arch_\eps(d,n,p,B,s,\eps)$ with $d$-dimensional input and one-dimensional output such that for any $f\in\Fndp$ there is a neural network $\Phi_\eps^f$ that has architecture $\Arch_\eps$ such that
	\[
		\norm{\act{\Phi_\eps^f} - f}_{\Wkp[s][p][\cube^d]}\leq \eps
		\]
		and
	\begin{enumerate}[(i)]
		\item $L(\Arch_\eps)\leq c\cdot\log_2(\eps^{-n/(n-s)})$;
		\item $M(\Arch_\eps)\leq c\cdot\eps^{-d/(n-s)}\cdot\log_2(\eps^{-n/(n-s)})$;
		\item $N(\Arch_\eps)\leq c\cdot\eps^{-d/(n-s)}\cdot\log_2(\eps^{-n/(n-s)})$.
	\end{enumerate}
\end{theorem}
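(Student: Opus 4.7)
My plan is to follow Yarotsky's partition-of-unity plus localized polynomial approximation scheme, but upgrade every estimate from $L^\infty$ to $W^{s,p}$. Fix a grid parameter $N \in \N$ (eventually $N \sim \eps^{-1/(n-s)}$) and consider a tent-like partition of unity $\{\phi_m\}_{m\in\MNd}$ subordinate to a covering of $\cube^d$ by cubes of sidelength $\sim 1/N$ centered at $m/N$. These $\phi_m$ are products of one-dimensional hat functions, so each is exactly realizable by a small ReLU network of bounded size. For each $m$, I would attach the averaged (Sobolev-type) Taylor polynomial $P_m$ of $f$ of degree $n-1$ computed on $\supp\phi_m$, and define the approximant $\tilde f := \sum_m \phi_m P_m$. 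A Bramble--Hilbert argument on each patch yields $\|f-P_m\|_{W^{s,p}(\supp\phi_m)} \lesssim N^{-(n-s)} |f|_{W^{n,p}(\supp\phi_m)}$; summing with bounded overlap and using $\sum_m \phi_m \equiv 1$ gives $\|f-\tilde f\|_{W^{s,p}(\cube^d)} \lesssim N^{-(n-s)} \|f\|_{W^{n,p}}$.

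The next step is to realize $\tilde f$ by a ReLU network. Each coefficient of $P_m$ is a linear functional of $f$, bounded uniformly in the $W^{n,p}$-norm, so I can treat $P_m$ as a polynomial in $x$ with coefficients of size $\lesssim B$, and approximate it using the depth-$\bigO(\log_2(1/\delta))$ "approximate multiplication" networks $\apmult$ built from the ReLU-approximation $\appsq$ of $x \mapsto x^2$ via the polarization $xy = \tfrac14((x+y)^2 - (x-y)^2)$. The crucial upgrade is that $\appsq$, and hence $\apmult$, must now approximate the true squaring/multiplication not only in $L^\infty$ but simultaneously in $W^{1,\infty}$, so that their $W^{s,p}$ error, for any $s\in[0,1]$, is controlled by interpolation between the $L^p$ and $W^{1,p}$ bounds (this is what the preparatory appendices on interpolation and Sobolev spaces are for). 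The parallelization and sparse concatenation calculus from Section~\ref{sec:network} then assembles, for each $m$, a subnetwork of size $\bigO(\log(1/\delta))$ that outputs an approximation to $\phi_m \cdot P_m$, and a final parallelization over the $(N+1)^d$ grid points followed by a summation layer produces the full architecture $\Arch_\eps$.

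To finish, I would propagate the errors: the triangle inequality splits the total $W^{s,p}$ error into the analytic error $\|f-\tilde f\|_{W^{s,p}} \lesssim N^{-(n-s)}$ plus the network error $\|\tilde f - R_\rho(\Phi^f_\eps)\|_{W^{s,p}}$, which on each patch is controlled by the $W^{s,p}$ accuracy $\delta$ of $\apmult$ times the number of patches. Choosing $\delta \sim N^{-(n-s)-d/p}$ and $N \sim \eps^{-1/(n-s)}$ balances the two contributions at $\eps$. The complexity count is then routine: $N^d$ patches each of weight- and neuron-count $\bigO(\log(1/\delta)) = \bigO(\log(\eps^{-n/(n-s)}))$ yield the stated bounds on $M(\Arch_\eps)$ and $N(\Arch_\eps)$, while the depth is the depth of a single multiplication network, $\bigO(\log(\eps^{-n/(n-s)}))$.

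The main obstacle is the $W^{s,p}$ control of the approximate multiplication for fractional $s\in(0,1)$: the Slobodeckij seminorm is nonlocal and does not behave well under pointwise products, so one cannot simply bound it by a product of sup-norms. I expect to handle this by first establishing sharp $L^p$ and $W^{1,p}$ bounds for $\apmult(x,y)-xy$ (using that both the error and its weak gradient are piecewise linear and supported on dyadic levels that can be summed geometrically), and then applying a real-interpolation / Gagliardo--Nirenberg-type inequality between $L^p$ and $W^{1,p}$ to recover the $W^{s,p}$ bound. Making this interpolation compatible with the summation over the $N^d$ local patches, while tracking explicit constants that depend only on $d,n,p,s,B$, is the delicate part of the argument.
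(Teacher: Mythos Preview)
Your proposal is correct and follows essentially the same route as the paper: tent partition of unity, averaged Taylor polynomials with a Bramble--Hilbert bound, ReLU approximate multiplication controlled in $W^{1,\infty}$, assembly via the network calculus, and real interpolation between $L^p$ and $W^{1,p}$ to reach $W^{s,p}$. Two small corrections worth flagging: for $p<\infty$ the local Taylor coefficients are not uniformly $\lesssim B$ but scale like $N^{d/p}\|\tilde f\|_{W^{n-1,p}(\Omega_{m,N})}$, and this extra $N^{d/p}$ is what cancels against the patch volume $N^{-d/p}$ in the $\ell^p$-sum over patches (so the correct multiplication accuracy is $\delta\sim N^{-n}\sim\eps^{n/(n-s)}$, not $N^{-(n-s)-d/p}$); and your ``main obstacle'' is resolved exactly as you anticipate in the last lines---one first proves the global $L^p$ and $W^{1,p}$ error bounds by summing the local $W^{k,\infty}$ estimates with bounded overlap, and only then applies the interpolation inequality once to the aggregated quantity, so the nonlocal Slobodeckij seminorm is never estimated patchwise.
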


Clearly, if $s=0$ and $p=\infty$, then Theorem~\ref{thm:main} coincides with the theorem shown by Yarotsky in~\cite{yarotsky2017error}. Note that if $\Phi$ is a neural network and $\rho$ is the ReLU, then the restriction of $\act{\Phi}$ to $\cube^d$ is bounded and Lipschitz continuous (cf.\ Remark \ref{remark:nn_lipschitz}) and hence in view of \cite[Chapter 1.3]{brenner2007mathematical} an element of the Sobolev space $\Wkp[1][\infty][\cube^d]$. Therefore, the expressions in the previous theorem are well-defined.

\begin{remark*}
	Theorem~\ref{thm:main} holds for all activation functions that are in some sense similar to the ReLU. In particular, it follows directly from~\cite[Proposition~1]{yarotsky2017error} that the statement holds for any continuous piecewise linear activation function $\hat\rho$ with $M$ breakpoints, where $1\leq M<\infty$.
\end{remark*}

As a consequence of Theorem~\ref{thm:main} and Lemma~\ref{lemma:neural_nets_dense_to_standard}, we can easily derive a similar statement for standard neural networks.
\begin{corollary}
	Let $d\in \N$, $n\in\N_{\geq 2}$, $1\leq p\leq \infty$, $B>0$, and $0\leq s \leq 1$. Then, there exists a constant $c=c(d,n,p,B,s)>0$ with the following properties:
	
	For any $\epsilon \in \epsin$, there is a standard neural network architecture $\Arch_{\text{st},\eps}=\Arch_{\text{st},\eps}(d,n,p,B,s,\eps)$ with $d$-dimensional input and one-dimensional output such that for any $f\in\Fndp$ there is a standard neural network $\Phi_{\text{st},\eps}$ that has architecture $\Arch_{\text{st},\eps}$ such that
	\[
		\norm{\act{\Phi_{\text{st},\eps}^f} - f}_{\Wkp[s][p][\cube^d]}\leq \eps
		\]
		and
	\begin{enumerate}[(i)]
		\item $L(\Arch_{\text{st},\eps})\leq c\cdot\log_2(\eps^{-n/(n-s)})$;
		\item $M(\Arch_{\text{st},\eps})\leq c\cdot\eps^{-d/(n-s)}\cdot\log_2^2(\eps^{-n/(n-s)})$;
		\item $N(\Arch_{\text{st},\eps})\leq c\cdot\eps^{-d/(n-s)}\cdot\log_2^2(\eps^{-n/(n-s)})$.
	\end{enumerate}
\end{corollary}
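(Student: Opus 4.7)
The plan is to derive the corollary as a direct consequence of Theorem~\ref{thm:main} combined with the conversion result in Lemma~\ref{lemma:neural_nets_dense_to_standard}. First, for a given $\eps\in\epsin$ I would invoke Theorem~\ref{thm:main} to obtain an architecture $\Arch_\eps$ of depth $L:=L(\Arch_\eps)\leq c\log_2(\eps^{-n/(n-s)})$ with $M(\Arch_\eps),N(\Arch_\eps)\leq c\,\eps^{-d/(n-s)}\log_2(\eps^{-n/(n-s)})$ that realizes $\eps$-approximations (in $\Wkp[s][p][\cube^d]$) for every $f\in\Fndp$ via networks $\Phi_\eps^f$.

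Next, I would apply the construction from the proof of Lemma~\ref{lemma:neural_nets_dense_to_standard} not to a particular realization but to the architecture $\Arch_\eps$ itself, defining $\Arch_{\text{st},\eps}$ to be the standard architecture obtained by that recipe. The key observation is that the lemma's construction is purely structural: the block pattern of the new matrices depends only on which entries of the original matrices are allowed to be nonzero, not on their specific values. Hence, for every $f$, applying the same construction to $\Phi_\eps^f$ yields a standard network $\Phi_{\text{st},\eps}^f$ with architecture $\Arch_{\text{st},\eps}$ and, by the lemma, the realization satisfies $\act{\Phi_{\text{st},\eps}^f}=\act{\Phi_\eps^f}$, so the $\Wkp[s][p][\cube^d]$ error bound transfers verbatim.

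Finally I would collect the complexity estimates. The depth is preserved, giving $L(\Arch_{\text{st},\eps})=L(\Arch_\eps)\leq c\log_2(\eps^{-n/(n-s)})$. For neurons and weights, Lemma~\ref{lemma:neural_nets_dense_to_standard} yields $N(\Arch_{\text{st},\eps})\leq C_1\,L(\Arch_\eps)\,N(\Arch_\eps)$ and $M(\Arch_{\text{st},\eps})\leq C_2\bigl(L(\Arch_\eps)\,N(\Arch_\eps)+M(\Arch_\eps)\bigr)$; plugging in the bounds from Theorem~\ref{thm:main} produces the extra factor $\log_2(\eps^{-n/(n-s)})$ and hence the $\log_2^2$ estimates claimed in (ii) and (iii), after absorbing all constants into a new constant $c=c(d,n,p,B,s)$.

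The only nontrivial point, and the one I would state most carefully, is that Lemma~\ref{lemma:neural_nets_dense_to_standard} can be applied at the architecture level uniformly over $f$; once that is made precise, the rest is a routine application of the lemma's quantitative bounds. No genuine new obstacle arises, since the lemma already encapsulates all the work of turning skip connections into standard feedforward connections through identity-like gadgets.
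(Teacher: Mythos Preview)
Your proposal is correct and matches the paper's approach exactly: the paper simply states that the corollary is a consequence of Theorem~\ref{thm:main} and Lemma~\ref{lemma:neural_nets_dense_to_standard} without giving further details, and your argument fills in precisely those details. Your observation that the construction in Lemma~\ref{lemma:neural_nets_dense_to_standard} is purely structural (and hence applies uniformly at the architecture level) is the right way to justify the existence of a single standard architecture $\Arch_{\text{st},\eps}$ that works for all $f\in\Fndp$.
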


\subsection{Lower complexity bounds}\label{subsec:lower_bounds}
In this subsection, we show that the same regularity-complexity trade-off that can be observed for upper complexity bounds can be shown for lower bounds. In~Theorem~4~a) in~\cite{yarotsky2017error} Yarotsky proves that a network architecture capable of approximating any function $f\in \Fndone$ up to a $L^\infty$-error $\eps$ has at least $c\cdot \eps^{-d/(2n)}$ weights. Here we consider approximations in $W^{1,\infty}$ norm. The following theorem combines the result from~\cite{yarotsky2017error} with our result.
\begin{theorem}\label{thm:lower_bound}
	Let $d\in\N$, $n\in\N_{\geq 2}$, $B>0$ and $k\in\{0,1\}$. Then, there is a constant $c=c(d,n,B,k)>0$ with the following property:

	If $\eps\in\epsin$ and $\Arch_\eps=\Arch_\eps(d,n,B,k,\eps)$ is an architecture such that for any $f\in\Fnd$ there is a neural network $\Phi_\eps^f$ that has architecture $\Arch_\eps$ and 
	\[
		\norm*{\act{\Phi_\eps^f}-f}_{\Wkp[k][\infty][\cube^d]}\leq \eps,
		\]
		then $\Arch_\eps$ has at least $M(\Arch_\eps)\geq c\eps^{\nicefrac{-d}{2(n-k)}}$ weights.
\end{theorem}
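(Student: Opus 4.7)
The plan is to adapt Yarotsky's VC-dimension lower bound from \cite[Theorem~4~a)]{yarotsky2017error}, which handles the $L^\infty$ case, to the $W^{1,\infty}$ setting. First I would fix a $C^\infty$ bump $\phi_0\colon\R^d\to\R$ supported in $(0,1)^d$ together with two points $a_\ast,b_\ast\in(0,1)^d$ satisfying $\phi_0(a_\ast)=1$ and $\phi_0(b_\ast)=0$. For a scale $\delta\in(0,1)$ to be fixed later, I would partition $\cube^d$ into $N:=\lfloor 1/\delta\rfloor^d$ disjoint sub-cubes of side $\delta$ with lower corners $c_i$, and set $\phi_i(x):=\phi_0((x-c_i)/\delta)$. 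A direct rescaling yields $\norm{\phi_i}_{\Wkp[n][\infty][\cube^d]}\leq C\delta^{-n}$, so choosing the amplitude $A:=B\delta^n/C$ makes $f_y:=A\sum_i y_i\phi_i$ lie in $\Fnd$ for every $y\in\{0,1\}^N$, since the $\phi_i$ have pairwise disjoint supports. The probe points $a_i:=c_i+\delta a_\ast$ and $b_i:=c_i+\delta b_\ast$ then satisfy $f_y(a_i)-f_y(b_i)=Ay_i$.

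The second step is to read each bit $y_i$ off from an approximating network. By hypothesis, for each $y$ there exists a weight vector $w_y$ such that $g_y:=\act{\Arch_\eps(w_y)}$ satisfies $\norm{g_y-f_y}_{\Wkp[k][\infty][\cube^d]}\leq\eps$. For $k=0$ the triangle inequality immediately gives $|(g_y(a_i)-g_y(b_i))-Ay_i|\leq 2\eps$, whereas for $k=1$ I would use that the error $g_y-f_y$ has weak gradient of $L^\infty$-norm at most $\eps$ and is therefore $O(\eps)$-Lipschitz on the convex cube, which yields $|(g_y(a_i)-g_y(b_i))-Ay_i|\leq C'\eps\delta$ for a dimensional constant $C'$. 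In both cases, choosing $\delta^{n-k}$ proportional to $\eps$ (with a constant depending only on $d,n,B$) makes this error strictly less than $A/2$, so that the sign of $g_y(a_i)-g_y(b_i)-A/2$ recovers $y_i$ exactly; this forces $N\gtrsim \eps^{-d/(n-k)}$. The Boolean classifier family $\mathcal{H}:=\{h_w\mid w\in\R^{M(\Arch_\eps)}\}$ on $\{1,\dots,N\}$, defined by $h_w(i):=\mathbf{1}\{\act{\Arch_\eps(w)}(a_i)-\act{\Arch_\eps(w)}(b_i)\geq A/2\}$, then satisfies $h_{w_y}=y$ for every $y$, so $\mathcal{H}$ shatters $\{1,\dots,N\}$ and $\vcdim(\mathcal{H})\geq N$.

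The final step is to bound $\vcdim(\mathcal{H})$ from above. Each $h_w(i)$ is the sign of a piecewise polynomial function of $w$ whose number of regions and degree are controlled by $M(\Arch_\eps)$, a routine consequence of the ReLU structure of Definition~\ref{def:nn}, so a classical VC-dimension bound of Goldberg--Jerrum / Bartlett--Maiorov--Meir type yields $\vcdim(\mathcal{H})\leq C\,M(\Arch_\eps)^2$. Combining the two estimates gives $M(\Arch_\eps)\geq c\,\eps^{-d/(2(n-k))}$, as claimed. I expect the main difficulty to be the $k=1$ case: Yarotsky's original argument reads each bit $y_i$ directly from a single point value $g_y(c_i)\approx Ay_i$, but this is no longer possible here because the amplitude $A\sim\delta^n$ is much smaller than $\eps$ in the relevant regime. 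Replacing the pointwise readout by the finite difference $g_y(a_i)-g_y(b_i)$ and converting the $W^{1,\infty}$-bound into a Lipschitz bound on the error introduces an extra factor $\delta$ into the error budget, and this is exactly what shifts the effective exponent from $n$ to $n-k$.
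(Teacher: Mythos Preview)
Your proposal is correct and in fact cleaner than the route the paper takes. Both arguments are VC-dimension arguments culminating in the same Goldberg--Jerrum/Anthony--Bartlett bound $\vcdim\lesssim M(\Arch_\eps)^2$, but the way the bit $y_i$ is read off from the approximating network differs. The paper, for $k=1$, encodes $y_i$ in a \emph{directional derivative} of the bump $f_y$ at a carefully chosen point $\tilde x_m$, and then recovers this derivative from the network by computing a finite difference $\frac{1}{\delta}\bigl(\act{\Arch_\eps(w)}(\tilde x_m-\delta\nu)-\act{\Arch_\eps(w)}(\tilde x_m)\bigr)$ along a segment lying in a single affine piece of the ReLU realization; this requires an auxiliary lemma (Lemma~\ref{lemma:lower_bounds_affine_set}) locating such a piece and introduces an extra real parameter $\delta$ into the hypothesis class, so the VC bound is applied with $M(\Arch_\eps)+1$ parameters. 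Your approach instead keeps the two probe points $a_i,b_i$ fixed in advance and exploits only that the $W^{1,\infty}$-bound on the error makes $g_y-f_y$ Lipschitz with constant $O(\eps)$, giving the crucial extra factor $\delta$ in the readout error directly; this avoids any appeal to the piecewise-affine structure, avoids the extra parameter, and handles $k=0$ and $k=1$ in a single computation. The price is that your argument needs the domain to be convex (or at least quasi-convex) so that the gradient bound implies a global Lipschitz bound, but on $\cube^d$ this is free. Both routes yield the same exponent $-d/(2(n-k))$; yours is more elementary and would also extend unchanged to activation functions that are not piecewise linear.
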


	In case $k=0$, Theorem~\ref{thm:lower_bound} coincides with~\cite[Theorem~4~a)]{yarotsky2017error}.
\begin{remark*}
	Note that the case of standard neural network architectures is included in the theorem above and, thus, the same lower bounds hold true for this case.

	Moreover, the proof can easily be adapted to piecewise polynomial activation functions (not necessarily continuous) with a finite number of breakpoints.
\end{remark*}

\section{Discussion and future work}\label{sec:discussion}
The motivation behind deriving theoretical results for deep neural networks is to improve our understanding of their empirical success in a broad area of applications. In this section, we briefly describe the practical relevance of our result, but mostly focus on aspects where our theoretical framework differs from empirical observations or fails to incorporate computational limitations met in implementations. Furthermore, we will discuss possible extensions of our results.

\begin{itemize}
	\item \emph{Practical relevance.} Our results give upper and lower bounds for the complexity of neural network architectures used in \emph{Sobolev training} (see Section~\ref{subsec:intro_motivation} and~\cite{NIPS2017_7015}). Furthermore, we anticipate our results to lead to complexity bounds in the theoretical foundation of deep learning-based approaches for the numerical solution of PDEs. We leave an in-depth analysis of this topic for future research.
	\item \emph{Curse of dimension.} One of the main reasons for the current interest in deep neural networks is their outstanding performance in high-dimensional problems. As an example, consider the popular ILSVR\footnote{Typically abbreviated with \emph{ILSRC} which stands for \emph{ImageNet Large Scale Visual Recognition Challenge}.} challenge  (see~\cite{russakovsky2015imagenet}), an image recognition task on the ImageNet database (see~\cite{deng2009imagenet}) containing variable-resolution images which are typically downsampled before training to yield a roughly $(240\times240\times 3)$-dimensional input space (cf.\ e.g.\ \cite{krizhevsky2012imagenet,zeiler2014visualizing, simonyan2014very}).
		
Our asymptotic upper complexity bounds for the number of weights and neurons are in $\bigO(\eps^{-d/(n-s)})$, and thus depend strongly on the dimension $d$ of the input space. Moreover, the constants in the upper bounds for the number of layers, weights, and neurons also increase exponentially with increasing $d$. 
Even if there is still a gap between our upper and lower bounds for the weights of a network architecture, Theorem~\ref{thm:lower_bound} shows that one cannot expect to circumvent the curse of dimension in the setting considered in this paper.

Several ways have been proposed so far to tackle this common problem. One idea is to think of the data as residing on or near a manifold $\mathcal{M}$ embedded in $\R^d$ with dimension~$d_{\mathcal{M}}\ll d$ (see~\cite{bengio2013representation}). Considering, for example, image classification problems this idea seems to be rather intuitive since most of the elements in $\R^{240\times 240 \times 3}$ are not perceived as images. A similar approach is to narrow down the approximated function space by incorporating invariances (see \cite{mallat2012group}). If, for example, the approximated function~$f$ maps images to labels, then it makes sense to assume that $f$ is translation and rotation invariant. The additional structure of the function space can then be exploited in the approximation (see~\cite[Section~5]{petersen2017optimal}). In particular, the last approach could also be suitable to tackle the curse of dimension in our result.

	\item \emph{The power of depth.} For $s=0$, the approximations obtained in Theorem~\ref{thm:main} and \cite[Theorem 1]{yarotsky2017error} coincide. In this case, Yarotsky showed in \cite{yarotsky2017error} that the constructed unbounded depth approximations for functions in $\Wkp[n][\infty][\cube^d]$ (with $n>2$) are asymptotically more efficient in the number of weights and neurons than approximations with fixed length $L$ if 
	\[
		\frac{d}{n}<\frac{1}{2(L-1)}.	
		\]
		As a consequence, to be more efficient than a shallow network, i.e.\ a network with depth~$L=2$, one needs $n>2d$ regularity. 
Even if this result does not completely explain the success of deep networks over shallow ones, since $d$ is typically very large, it would be interesting to obtain similar results for higher-order Sobolev norms.
		

		
	\item \emph{Unbounded complexity of weights.} When training a neural network on a computer, the weights have to be stored in memory. In practice, storing weights up to an arbitrary precision or even unbounded weights (in absolute value) is infeasible. In the construction of the neural network $\act{\Phi^f_\eps}$ that approximates a function $f$ up to an $W^{s,\infty}$-error $\eps$ and realizes the upper complexity bound from Theorem~\ref{thm:main} we used weights $W_\eps$ with $\pabs{W_\eps}\to \infty$ as $\eps\downarrow 0$ (see construction of $\Phi_{m,l}$ in the proof of Lemma~\ref{lemma:partition_of_unity}~(\ref{item:pou_network})).
		
In \cite{petersen2017optimal, petersen2017optimal, GrohsPEB2019}, neural networks are restricted to possess quantized weights (see~\cite[Definition~2.9]{petersen2017optimal}) which controls the complexity of the weights depending on the approximation error $\eps$. It would be interesting to extend Theorem~\ref{thm:main} to networks with quantized weights and to analyze how coarse a quantization is possible to maintain the associated upper bounds.
\end{itemize}

\section*{Acknowledgements}
I.G.\ would like to thank Mones Raslan for fruitful discussions on the topic and acknowledges support from the Research Training Group "Differential Equation- and Data-driven Models in Life Sciences and Fluid Dynamics: An Interdisciplinary Research Training Group (DAEDALUS)" (GRK 2433) funded by the German Research Foundation (DFG). G. K. acknowledges partial support by the Bundesministerium f\"ur Bildung und Forschung (BMBF) through the Berliner Zentrum for Machine Learning (BZML), Project AP4, by the
Deutsche Forschungsgemeinschaft (DFG) through grants CRC 1114 "Scaling Cascades in Complex Systems",
Project B07, CRC/TR 109 "Discretization in Geometry and Dynamics', Projects C02 and C03,
RTG DAEDALUS (RTG 2433), Projects P1 and P3, RTG BIOQIC (RTG 2260), Projects P4 and P9,
and SPP 1798 "Compressed Sensing in Information Processing", Coordination Project and Project Massive
MIMO-I/II, by the Berlin Mathematics Research Center MATH+ , Projects EF1-1 and EF1-4, and
by the Einstein Foundation Berlin. P.P.\ is supported by a DFG Research Fellowship ”Shearlet-based energy functionals for anisotropic phase-field methods”.

\appendix

\section{Interpolation spaces}\label{app:interpolation}
In this section, we recall some essential notations and results from interpolation theory in Banach spaces. In particular, we present the $K$-method of real interpolation.

For two Banach spaces $B_0$ and $B_1$ interpolation techniques allow the definition and study of a family of spaces that in some sense bridge the gap between $B_0$ and $B_1$. This rather abstract concept will be applied to Sobolev spaces in Section \ref{subsec:fractional_sobololev} to derive spaces of fractional regularity. Interpolation theory is a valuable tool in harmonic analysis and the study of partial differential equations. A small summary about the history of the development of the theory of interpolation spaces can be found in \cite[Lecture 21]{tartar2007introduction}.
Interested readers are recommended to read the monographs \cite{lunardi2018interpolation,lofstrm1976interpolation,tartar2007introduction} and \cite{triebel1978interpolation} for a detailed treatment in the context of partial differential equations. 

The basic notions of functional analysis are assumed to be known and we refer to \cite{yosida1995functional} for an introduction.

If $B_0,B_1$ are two Banach spaces such that $B_1$ is continuously embedded in $B_0$, then we write $B_1\subset B_0$ and call $(B_0,B_1)$ an \emph{interpolation couple}. One can also consider more general pairs of Banach spaces but this setting is sufficiently general for our purposes.
\begin{definition}
	Let $(B_0,B_1)$ be an interpolation couple. For every $u\in B_1$ and $t>0$ we define
	\[
		K(t,u,B_0,B_1):=\inf_{v\in B_1} \left(\norm{u-v}_{B_0} + t\norm{v}_{B_1}\right).
		\]
\end{definition}
The first term measures how well $u$ can be approximated by elements from the smaller space $B_1$ in $\norm{\cdot}_{B_0}$ and the second term is a penalty term weighted by $t$. 

Now we can define interpolation spaces.
\begin{definition}
	Let $(B_0,B_1)$ be an interpolation couple. Let $0<\theta<1$ and $1\leq p\leq\infty$. We set
	\[
		\norm{u}_{\icouple}:=\begin{dcases}
			\left( \int_0^\infty t^{-\theta p}K(t,u,B_0,B_1)^p \frac{dt}{t}\right)^{1/p},&\text{for}\quad 1\leq p<\infty\\[0.5em]
			\sup_{0<t<\infty} t^{-\theta}K(t,u,B_0,B_1),& \text{for}\quad p=\infty.
		\end{dcases}
	\]
	Moreover, we define the set
	\[
		\icouple:=\ipair:=\left\{u\in B_0:\norm{u}_{\icouple}<\infty\right\}.
	\]
\end{definition}
	The norm $u\mapsto \norm{u}_{\icouple}$ turns $\icouple$ into a Banach space (cf.\ \cite[Proposition~1.5]{lunardi2018interpolation}) which is called a \emph{real interpolation space}. 

The next lemma shows that in the sense of a continuous embedding the spaces~$B_{\theta,p}$ form a family of nested Banach spaces.
	\begin{lemma}\label{lemma:interpolation_relations}
			Let $1\leq p\leq \infty$, then the following holds:
			\begin{enumerate}[(i)]
				\item If $(B_0,B_1)$ is an interpolation couple and $0<\theta_1<\theta_2<1$, then
			\[
					B_1\subset B_{\theta_2,p}\subset B_{\theta_1,p} \subset B_0;
					\]
				\item\label{item:interpolation_identity} if $B$ is a Banach space and $0<\theta<1$, then $\icouple[B][B]=B$.
			\end{enumerate}
	\end{lemma}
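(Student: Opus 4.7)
The plan is to prove both statements by working directly with the $K$-functional and establishing sharp two-sided bounds on $K(t,u,B_0,B_1)$ in terms of $\|u\|_{B_0}$ and $\|u\|_{B_1}$. The two basic estimates I will use throughout are the elementary upper bound $K(t,u) \le \|u\|_{B_0}$, obtained by taking $v = 0$, and a matching lower bound
\[
K(t,u) \ge \|u\|_{B_0}\,\min(1,\,t/C),
\]
where $C \ge 1$ denotes the norm of the continuous embedding $B_1 \hookrightarrow B_0$. The latter follows by combining the reverse triangle inequality $\|u-v\|_{B_0} \ge \max(0,\|u\|_{B_0}-\|v\|_{B_0})$ with $\|v\|_{B_0} \le C\|v\|_{B_1}$ and then minimising the resulting scalar expression over $\|v\|_{B_1} \ge 0$. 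A useful consequence is that $K(t,u) = \|u\|_{B_0}$ for every $t \ge C$.

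I will first handle part (ii). In the case $B_0 = B_1 = B$ the embedding constant is $C = 1$ and the bounds above are sharp: a direct computation gives the explicit formula $K(t,u,B,B) = \min(1,t)\|u\|_{B}$. Substituting into the definition of $\|\cdot\|_{(B,B)_{\theta,p}}$ reduces everything to evaluating $\int_0^\infty t^{-\theta p}\min(1,t)^p\,dt/t$, respectively the supremum $\sup_{t>0} t^{-\theta}\min(1,t)$ for $p=\infty$, both of which are finite constants $c_{\theta,p}$. Thus $\|u\|_{(B,B)_{\theta,p}} = c_{\theta,p}\|u\|_B$ and the two spaces agree as sets with equivalent norms.

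For part (i) I will split the chain into three continuous embeddings and treat them in a convenient order. First, $B_{\theta_1,p} \hookrightarrow B_0$: using the identity $K(t,u) = \|u\|_{B_0}$ on $[C,\infty)$, the tail $\int_C^\infty t^{-\theta_1 p}K(t,u)^p\,dt/t$ (or the supremum for $p=\infty$) already dominates $\|u\|_{B_0}^p$ times an explicit constant, yielding $\|u\|_{B_0} \le C^{\theta_1}(\theta_1 p)^{1/p}\|u\|_{\theta_1,p}$. Second, $B_1 \hookrightarrow B_{\theta_2,p}$: combining the two upper bounds $K(t,u) \le \|u\|_{B_0} \le C\|u\|_{B_1}$ and $K(t,u) \le t\|u\|_{B_1}$ (taking $v = u$) produces $K(t,u) \le \min(C,t)\|u\|_{B_1}$, and plugging this into the defining integral splits it at $t = C$ into two pieces that are finite because $\theta_2, 1-\theta_2 \in (0,1)$. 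Third, for the middle embedding $B_{\theta_2,p} \hookrightarrow B_{\theta_1,p}$, split the integral defining $\|u\|_{\theta_1,p}^p$ at $t = 1$: on $(0,1]$ the inequality $t^{-\theta_1} \le t^{-\theta_2}$ bounds the contribution by $\|u\|_{\theta_2,p}^p$, and on $[1,\infty)$ the universal bound $K(t,u) \le \|u\|_{B_0}$ gives a tail bounded by $\|u\|_{B_0}^p/(\theta_1 p)$, which in turn is controlled by a constant multiple of $\|u\|_{\theta_2,p}^p$ via the embedding $B_{\theta_2,p} \hookrightarrow B_0$ obtained in the first step.

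The main obstacle is the lower bound $K(t,u) \ge \|u\|_{B_0}\min(1,t/C)$, since this is the sole nontrivial ingredient that transfers information from the $K$-functional back to the coarser norm $\|\cdot\|_{B_0}$; once it is in hand, the remainder is routine integration and case analysis. The case $p = \infty$ is handled in parallel throughout, with suprema replacing integrals and each tail integral $\int t^{-\theta p-1}\,dt$ replaced by $\sup_t t^{-\theta}$ evaluated at the relevant endpoint, so no separate argument is required.
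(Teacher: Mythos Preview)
Your proposal is correct. The paper does not actually prove this lemma: it simply cites \cite{lunardi2018interpolation} (page~2 for part~(i) and Proposition~1.4 for part~(ii)). By contrast, you give a fully self-contained argument from the $K$-functional. The key ingredient you identify --- the two-sided bound $\min(1,t/C)\,\|u\|_{B_0} \le K(t,u) \le \|u\|_{B_0}$ for $u \in B_0$, plus the refinement $K(t,u) \le \min(C,t)\,\|u\|_{B_1}$ for $u \in B_1$ --- is exactly what is needed, and your minimisation argument for the lower bound is clean. Your treatment of the middle embedding $B_{\theta_2,p} \subset B_{\theta_1,p}$ via splitting at $t=1$ and bootstrapping through $B_{\theta_2,p} \hookrightarrow B_0$ is the standard route and works without issue; the $p=\infty$ modifications are indeed routine. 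What your approach buys over the paper's citation is transparency: the explicit constants (e.g.\ $c_{\theta,\infty}=1$ in part~(ii), matching the paper's later Corollary~\ref{cor:interpol_norm}) fall out of the computation, whereas the cited reference has to be unpacked to recover them.
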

	\begin{proof}
		For (i) see \cite[p.\ 2]{lunardi2018interpolation} and for (ii) \cite[Proposition 1.4]{lunardi2018interpolation}.	
	\end{proof}

	One of the most important theorems in interpolation theory shows how the norm of an operator defined on interpolation couples relates to the operator norm with respect to the corresponding interpolation spaces. The theorem can be found in \cite[Theorem 1.6]{lunardi2018interpolation}.
\begin{theorem}\label{thm:interpolation_operator}
	Let $(A_0,A_1)$ and $(B_0,B_1)$ be two interpolation couples. If $T\in\Lcal(A_0,B_0)\cap\Lcal(A_1,B_1)$, then $T\in\Lcal(A_{\theta,p},B_{\theta,p})$ for every $0<\theta<1$ and $1\leq p\leq\infty$. Furthermore,
\begin{equation}
	\norm{T}_{\Lcal(A_{\theta,p}, B_{\theta,p})}\leq\norm{T}_{\Lcal(A_0,B_0)}^{1-\theta}\cdot \norm{T}_{\Lcal(A_1,B_1)}^\theta.
\end{equation}
\end{theorem}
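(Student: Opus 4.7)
The plan is to prove this classical interpolation inequality by transferring a scaling relation from $T$ acting on one pair to the $K$-functional of the other. Write $M_0 := \norm{T}_{\Lcal(A_0,B_0)}$ and $M_1 := \norm{T}_{\Lcal(A_1,B_1)}$, and assume first that $M_0, M_1 > 0$; the degenerate cases $M_0 = 0$ or $M_1 = 0$ (in which $T$ vanishes on a dense subspace and the inequality is interpreted with the convention $0^\alpha = 0$) can be dealt with separately at the end by a short continuity argument using $B_1 \subset B_0$.

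The crucial step is the following observation. Fix $u \in A_0$ and any decomposition $u = (u - v) + v$ with $v \in A_1$. Since $T \in \Lcal(A_0, B_0) \cap \Lcal(A_1, B_1)$, the identity $Tu = T(u-v) + Tv$ is a valid decomposition of $Tu$ in the pair $(B_0, B_1)$, and applying the two operator bounds componentwise gives, for every $t > 0$,
\[
\norm{T(u-v)}_{B_0} + t \norm{Tv}_{B_1} \;\leq\; M_0 \norm{u-v}_{A_0} + t M_1 \norm{v}_{A_1} \;=\; M_0 \bigl( \norm{u-v}_{A_0} + (t M_1 / M_0) \norm{v}_{A_1} \bigr).
\]
Taking the infimum over $v \in A_1$ on both sides produces the scaling identity
\[
K(t, Tu, B_0, B_1) \;\leq\; M_0 \cdot K\!\left(t\, M_1/M_0,\, u,\, A_0, A_1\right) \qquad \text{for all } t > 0,
\]
which is the heart of the proof; everything that follows is routine.

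To conclude for $1 \leq p < \infty$, insert this inequality into the definition of $\norm{Tu}_{\icouple[B_0][B_1]}$ and perform the substitution $s := t M_1/M_0$, noting that the measure $dt/t$ is invariant under dilations while $t^{-\theta p}$ picks up a factor $(M_0/M_1)^{\theta p}$. Collecting the prefactors yields
\[
\norm{Tu}_{\icouple[B_0][B_1]}^p \;\leq\; M_0^p \cdot (M_0/M_1)^{-\theta p} \int_0^\infty s^{-\theta p} K(s, u, A_0, A_1)^p \,\frac{ds}{s} \;=\; M_0^{(1-\theta)p} M_1^{\theta p} \norm{u}_{\icouple[A_0][A_1]}^p,
\]
and taking $p$-th roots gives the claimed bound. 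The case $p = \infty$ proceeds identically with the integral replaced by $\sup_{0 < t < \infty}$, the same substitution making the supremum invariant.

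I do not anticipate a genuine obstacle: the whole argument is driven by the scaling identity, and the only mild care needed concerns the degenerate cases $M_i = 0$ and checking that the substitution is legitimate when $M_1/M_0$ is neither $0$ nor $\infty$. Both are handled in one line each, so the proof is essentially two ingredients: the linearity-plus-operator-norm step giving the $K$-functional rescaling, and the homogeneity of the measure $dt/t$ under dilations.
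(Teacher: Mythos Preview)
The paper does not supply its own proof of this theorem: it is stated with a reference to \cite[Theorem 1.6]{lunardi2018interpolation} and nothing further. Your argument is correct and is precisely the standard proof one finds in that reference---the $K$-functional rescaling $K(t,Tu,B_0,B_1)\leq M_0\,K(tM_1/M_0,u,A_0,A_1)$ followed by the dilation-invariance of $dt/t$---so there is nothing to compare.
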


As a corollary from the previous theorem the following useful result can be obtained (see \cite[Corollary 1.7]{lunardi2018interpolation}).
\begin{corollary}\label{cor:interpol_norm}
	Let $(B_0,B_1)$ be an interpolation couple. Moreover, let $0<\theta<1$ and $1\leq p\leq\infty$, then there is a constant $c=c(\theta,p)>0$ such that for all $u\in B_1$ we have  
	\[
		\norm{u}_{B_{\theta,p}}\leq c \norm{u}_{B_0}^{1-\theta}\norm{u}_{B_1}^{\theta}.
		\]
		For the case $p=\infty$ we have $c(\theta,\infty)=1$.
\end{corollary}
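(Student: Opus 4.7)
The plan is to reduce the claim to Theorem~\ref{thm:interpolation_operator} by applying it to a suitable rank-one operator. For a fixed $u \in B_1$, I would define $T\colon\R\to B_0$ by $T(\lambda) := \lambda u$; since $B_1 \subset B_0$, this map also takes values in $B_1$, and its operator norms are immediate:
\[
\norm{T}_{\Lcal(\R,B_0)} = \norm{u}_{B_0}, \qquad \norm{T}_{\Lcal(\R,B_1)} = \norm{u}_{B_1}.
\]
Applying Theorem~\ref{thm:interpolation_operator} with the trivial interpolation couple $(\R,\R)$ then produces
\[
\norm{T}_{\Lcal(\icouple[\R][\R],\, \ipair)} \le \norm{u}_{B_0}^{1-\theta}\, \norm{u}_{B_1}^{\theta}.
\]
Evaluating the resulting bound at $\lambda = 1$ yields
\[
\norm{u}_{\ipair} \le \norm{u}_{B_0}^{1-\theta}\, \norm{u}_{B_1}^{\theta} \cdot \norm{1}_{\icouple[\R][\R]}.
\]
By Lemma~\ref{lemma:interpolation_relations}~(\ref{item:interpolation_identity}), the space $\icouple[\R][\R]$ coincides with $\R$, so $\norm{1}_{\icouple[\R][\R]}$ is a finite positive scalar that depends only on $\theta$ and $p$. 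Taking this as the constant $c(\theta,p)$ establishes the desired inequality.

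To verify the sharp value $c(\theta,\infty) = 1$, I would compute the $K$-functional of $(\R,\R)$ by hand. Choosing $\mu = 0$ when $t \ge 1$ and $\mu = \lambda$ when $t < 1$ gives
\[
K(t,\lambda,\R,\R) = \inf_{\mu\in\R}\bigl(|\lambda - \mu| + t|\mu|\bigr) = \min(1,t)\,|\lambda|,
\]
and therefore, for $p = \infty$, $\norm{1}_{\icouple[\R][\R]} = \sup_{t>0} t^{-\theta}\min(1,t) = 1$, with the supremum attained at $t = 1$. (For $1 \le p < \infty$ the analogous integral splits at $t=1$ into two convergent pieces and produces an explicit but unneeded constant.)

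The argument is essentially bookkeeping once the right rank-one operator is identified, and I do not anticipate a serious obstacle: the interpolation theorem cleanly packages the scaling invariance that underlies the inequality. The only mildly delicate point is the explicit computation of the $K$-functional of $(\R,\R)$, which is needed solely to pin down the sharp constant $1$ in the endpoint $p = \infty$.
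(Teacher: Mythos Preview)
Your proof is correct. The paper does not actually supply its own argument here; it only states that the result is a corollary of Theorem~\ref{thm:interpolation_operator} and cites \cite[Corollary~1.7]{lunardi2018interpolation}. Your rank-one-operator trick is a clean way to make the word ``corollary'' literal, and your computation of $K(t,1,\R,\R)=\min(1,t)$ and of $\norm{1}_{\icouple[\R][\R][\theta][\infty]}=1$ is accurate.

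For comparison, the standard direct proof (as in Lunardi) bypasses Theorem~\ref{thm:interpolation_operator} entirely: one simply bounds the $K$-functional by the two trivial decompositions $v=0$ and $v=u$, obtaining $K(t,u,B_0,B_1)\le \min\bigl(\norm{u}_{B_0},\,t\norm{u}_{B_1}\bigr)$, and then computes the $\sup$ or integral over $t$. Your route and the direct one are really the same computation in disguise (your $K$-functional on $(\R,\R)$ is exactly this $\min$ after rescaling by $t_0=\norm{u}_{B_0}/\norm{u}_{B_1}$), but your packaging has the pedagogical virtue of invoking Theorem~\ref{thm:interpolation_operator} as a genuine black box, which is what the paper's phrasing invites.
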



\section{Sobolev spaces}\label{app:sobolev}
In this section, we give a brief introduction to Sobolev spaces and present related concepts which are important for the proof of Theorem~\ref{thm:main}. In detail, we show estimates for compositions and products of functions from certain Sobolev spaces in Subsection~\ref{subsec:composition} and~\ref{subsec:product}, respectively. In Subsection \ref{subsec:taylor}, we introduce a generalization of Taylor polynomials which is one of the key tools in the proof of Theorem~\ref{thm:main}. In the last subsection, we apply Banach space interpolation theory to (integer) Sobolev spaces to define fractional Sobolev spaces. Again, let for this entire section, $d\in\N$ and, if not stated otherwise, $\Omega\subset\R^d$ denote an open subset of $\R^d$. 

As a generalization of Definition~\ref{def:sobolev_space}, we now introduce Sobolev spaces of vector-valued functions (see \cite[Chapter 1.2.3]{roubivcek2013nonlinear}).
\begin{definition}
	We set for $n\in\N_0$, $m\in\N$ and $1\leq p\leq \infty$
	\[
		\Wkpm[n]:=\left\{(f_1,\ldots,f_m): f_i\in\Wkp[n]\right\}
		\]
		with 
		\[
			\norm{f}_{\Wkpm[n]}:=\left(\sum_{i=1}^m \norm{f_i}_{\Wkp[n]}^p\right)^{1/p}\quad\text{for }1\leq p<\infty
			\]
		and
		\[
			\norm{f}_{\Wkpm[n][\infty][\Omega][m]}:=\max_{i=1,\ldots,m}\norm{f_i}_{\Wkp[n][\infty]}.
			\]
\end{definition}
These spaces are again Banach spaces. To simplify the exposition, we introduce notation for a family of semi-norms.

\begin{definition}[Sobolev semi-norm]\label{def:sob_seminorm}
	For $n,k\in\N_0$ with $k\leq n$, $m\in\N$ and $1\leq p\leq\infty$ we define for $f\in\Wkpm[n]$ the \emph{Sobolev semi-norm}
			\[
				\pabs{f}_{\Wkpm[k]}:=\Bigg(\sum_{i=1,\ldots,m,\,\pabs{\alpha}=k}\norm{D^\alpha f_i}_{\Lp}^p\Bigg)^{1/p}\quad\text{for }1\leq p<\infty
				\]
				and
			\[
				\pabs{f}_{\Wkpm[k][\infty][\Omega][m]}:=\max_{\substack{i=1,\ldots,m,\\\pabs{\alpha}=k}}\norm{D^\alpha f_i}_{\Linf}.
				\]
\end{definition}
For $m=1$ we only write $\pabs{\cdot}_{\Wkp[k]}$. Note that $\pabs{\cdot}_{\Wkp[0]}$ coincides with $\norm{\cdot}_{\Lp}$.



Rademacher's Theorem states that a Lipschitz continuous function $f:\Omega\to\R$ is differentiable (in the classical sense) almost everywhere in $\Omega$ (see e.g.\ \cite[Theorem 5.8.6]{evans1998partial}). The following theorem yields an alternative characterization for the space $W^{1,\infty}$ using Lipschitz continuous functions and connects the notion of the classical and the weak derivative. 
\begin{theorem}\label{theorem:Lipschitz}
	Let $\Omega\subset \R^d$ be open and convex. If $f\in\Winf$, then $f$ is $\norm{\normabsspace\pabs{\nabla f(x)}\normabsspace}_{\Lpd[\infty]}$-Lipschitz. Conversely, if $f$ is bounded and $L$-Lipschitz on $\Omega$, then $f \in \Winf$, the weak gradient of $f$ agrees almost everywhere with the classical gradient, and 
	\[
		\norm{\normabsspace\pabs{\nabla f(x)}\normabsspace}_{\Lpd[\infty]} \leq L.
		\]
\end{theorem}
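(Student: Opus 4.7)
The plan is to prove the two implications separately. For the forward direction, assume $f \in \Winf$. I would fix a standard mollifier $\eta_\epsilon$ and work on the sets $\Omega_\epsilon := \{x \in \Omega : B_{\epsilon,\pabs{\cdot}}(x) \subset \Omega\}$, setting $f_\epsilon := f * \eta_\epsilon$ on $\Omega_\epsilon$. A short computation using convexity of $\Omega$ shows that $\Omega_\epsilon$ is itself convex, and $f_\epsilon$ is smooth with $\nabla f_\epsilon = (\nabla f) * \eta_\epsilon$, so Young's inequality yields
\[
\norm{\,\pabs{\nabla f_\epsilon}\,}_{L^\infty(\Omega_\epsilon)} \leq \norm{\,\pabs{\nabla f}\,}_{\Linf}.
\]
The classical mean value theorem along the segment joining any $x, y \in \Omega_\epsilon$ (which lies in $\Omega_\epsilon$ by convexity) then gives $\pabs{f_\epsilon(x) - f_\epsilon(y)} \leq \norm{\,\pabs{\nabla f}\,}_{\Linf}\pabs{x-y}$. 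Sending $\epsilon \downarrow 0$, $f_\epsilon \to f$ pointwise a.e.\ in $\Omega$, so the Lipschitz estimate holds on a set of full measure; the restriction of $f$ to this dense subset is uniformly continuous, hence extends uniquely to a $\norm{\,\pabs{\nabla f}\,}_{\Linf}$-Lipschitz function on $\Omega$, which serves as the representative of $f$.

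For the reverse direction, assume $f$ is bounded and $L$-Lipschitz on $\Omega$. Extending $f$ to a Lipschitz function on $\R^d$ with the same constant (by Kirszbraun's theorem or a componentwise inf-convolution) and invoking Rademacher's theorem, the classical gradient $\nabla_c f$ exists a.e.\ in $\Omega$ and satisfies $\pabs{\nabla_c f} \leq L$. I then want to show that $\nabla_c f$ serves as the weak gradient of $f$. For each coordinate direction $e_i$, the difference quotients $D_h^i f(x) := (f(x + h e_i) - f(x))/h$ are uniformly bounded by $L$ and converge pointwise a.e.\ to $\partial_i^c f$ at every point of classical differentiability. For any $\varphi \in \Ctest$ and any $h$ with $\pabs{h}$ less than $\operatorname{dist}(\operatorname{supp}\varphi, \partial\Omega)$, a translation change of variables yields the identity
\[
\int_\Omega D_h^i f(x)\,\varphi(x)\,dx = -\int_\Omega f(x)\,\tfrac{\varphi(x) - \varphi(x - h e_i)}{h}\,dx.
\]
As $h \to 0$, the left-hand side converges to $\int_\Omega \partial_i^c f \cdot \varphi$ by dominated convergence, while the right-hand side converges to $-\int_\Omega f \cdot \partial_i \varphi$ because the backward difference quotient of the smooth $\varphi$ converges uniformly to $\partial_i \varphi$. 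Hence $\partial_i^c f$ is the weak partial derivative of $f$, and the a.e.\ bound $\pabs{\nabla_c f} \leq L$ translates to $f \in \Winf$ with $\norm{\,\pabs{\nabla f}\,}_{\Linf} \leq L$, giving both the norm estimate and the identification of weak and classical gradients.

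The main technical obstacle I anticipate is the bookkeeping in the forward direction: since $f \in \Winf$ is only an equivalence class, one has to carefully pass from pointwise Lipschitz estimates for the mollifications (valid on the exhausting family $\Omega_\epsilon$) to a single Lipschitz representative defined on all of $\Omega$. Convexity of $\Omega_\epsilon$ together with its monotone exhaustion of $\Omega$ is exactly what is needed to avoid complications from the boundary, but care is required because $\Omega$ is not assumed bounded and so one cannot appeal to compactness shortcuts. The remaining pieces — Young's inequality, the mean value theorem on convex sets, Rademacher's theorem, Kirszbraun extension, and the difference-quotient integration-by-parts identity — are entirely standard.
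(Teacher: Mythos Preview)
Your argument is correct in both directions and follows the standard route (mollification plus the mean value theorem on convex sets for the forward implication; Rademacher together with the difference-quotient identity for the converse). The paper itself does not give a proof of this theorem at all: it simply refers the reader to \cite[Theorem~4.1 and Remark~4.2]{heinonen2005lectures}. Two minor remarks: in the converse direction the Kirszbraun/McShane extension is unnecessary, since Rademacher's theorem already applies to Lipschitz functions on an open set and the integration-by-parts identity only uses test functions compactly supported in $\Omega$; and the bound $\pabs{\nabla f_\epsilon(x)}\le \norm{\,\pabs{\nabla f}\,}_{\Linf}$ is really just the vector-valued triangle inequality for the convolution integral rather than Young's inequality proper, though the conclusion is of course the same.
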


A proof can be found in \cite[Theorem 4.1]{heinonen2005lectures} together with \cite[Remark 4.2]{heinonen2005lectures}.

To use the bounds for the gradient obtained in the previous theorem in the Sobolev semi-norm setting from Definition \ref{def:sob_seminorm}, the following observation turns out to be useful. If $f\in \Winf$, then 
	\begin{equation}\label{eq:gradient_equivalence}
		\pabs{f}_{\Winf}\leq\norm{\normabsspace\pabs{\nabla f}\normabsspace}_{\Linf} \leq \sqrt d \pabs{f}_{\Winf}.
	\end{equation}

Moreover, it is not hard to see that if $\Omega\subset \R^d$ open and bounded, then $\Wkp[n][\infty]\subset \Wkp[n]$ for $1\leq p<\infty$, and $n\in\N_0$.

\subsection{Composition estimate}\label{subsec:composition}
In this subsection, we derive an estimate for the semi-norm $\pabs{\cdot}_{W^{1,\infty}}$ of the composition $g \circ f$ of two functions $f\in \Wkpm[1][\infty][\Omega][m]$ and $g\in \Wkp[1][\infty][\R^m]$. Since in general this is not a well-defined notion we need to clarify what $g \circ f$ denotes in this case.
To demonstrate the problem let $f$ and $g$ be as above and recall that $f$ and $g$ are equivalence classes of functions. Let now $\hat f$ be a representative for the equivalence class $f$ and $\hat g_1, \hat g_2$ be two representatives for $g$. Then $\hat g_1=\hat g_2$ almost everywhere but in general we do not have $\hat g_1\circ \hat f =\hat g_2 \circ \hat f$ almost everywhere. To circumvent this problem we denote by $g\circ f$ the equivalence class of $\hat g \circ \hat f$ where $\hat g$ and $\hat f$ are Lipschitz continuous representatives of $g$ and $f$, respectively.

The next lemma recalls a well known fact about the composition of Lipschitz functions and is not hard to see.
\begin{lemma}\label{lemma:lipschitz_comp}
	Let $d,m\in\N$ and $\Omega_1\subset \R^d$, $\Omega_2\subset\R^m$. Let $f:\Omega_1\to\Omega_2$ be a $L_1$-Lipschitz function and $g:\Omega_2\to\R$ be a $L_2$-Lipschitz function for some $L_1,L_2>0$. Then the composition $g\circ f:\Omega_1\to\R$ is a $(L_1\cdot L_2)$-Lipschitz function.
\end{lemma}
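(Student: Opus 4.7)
The plan is to carry out the standard one-line chain of inequalities that the result suggests. For arbitrary $x,y\in\Omega_1$, I would first apply the $L_2$-Lipschitz property of $g$ to the points $f(x),f(y)\in\Omega_2$, obtaining
\[
\pabs{g(f(x))-g(f(y))}\leq L_2\pabs{f(x)-f(y)}.
\]
Then I would invoke the $L_1$-Lipschitz property of $f$ on the right-hand side, which yields $\pabs{f(x)-f(y)}\leq L_1\pabs{x-y}$, and combining the two estimates gives
\[
\pabs{(g\circ f)(x)-(g\circ f)(y)}\leq L_1 L_2\pabs{x-y},
\]
as claimed. The norms here are the Euclidean norms on $\R^d$ and $\R^m$, consistent with the convention $\pabs{\cdot}$ fixed in Section~\ref{sec:prelim}.

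There is really no obstacle: both Lipschitz inequalities are available by hypothesis on the whole of $\Omega_1$ and $\Omega_2$ respectively, so no extension or restriction argument is needed, and no measurability subtleties arise because everything is pointwise for continuous functions. The only thing worth noting is that the composition $g\circ f$ is a genuine pointwise composition in this statement (both $f$ and $g$ are Lipschitz, hence continuous, representatives), so no ambiguity in the definition of $g\circ f$ remains and the conclusion is immediate.
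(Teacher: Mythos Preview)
Your proof is correct and is exactly the standard direct argument the paper has in mind; the paper itself does not give a proof and simply remarks that the fact ``is not hard to see.''
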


The following corollary establishes a chain rule estimate for $W^{1,\infty}$.
\begin{corollary}\label{cor:composition_norm}
	Let $d,m\in\N$ and $\Omega_1\subset \R^d$, $\Omega_2\subset\R^m$ both be open, bounded, and convex. Then, there is a constant $C=C(d,m)>0$ with the following property:
	
	If $f\in \Wkpm[1][\infty][\Omega_1][m]$ and $g\in \Wkp[1][\infty][\Omega_2]$ are Lipschitz continuous representatives such that $\ran f\subset \Omega_2$, then for the composition $g\circ f$ it holds that $g\circ f\in \Wkp[1][\infty][\Omega_1]$ and we have 
	\[
		\pabs{g\circ f}_{\Wkp[1][\infty][\Omega_1]}\leq C \pabs{g}_{\Wkp[1][\infty][\Omega_2]}\pabs{f}_{\Wkpm[1][\infty][\Omega_1][m]}
		\]
		and
		\[
			\norm{g\circ f}_{\Wkp[1][\infty][\Omega_1]} \leq C \max\left\{\norm{g}_{\Lp[\infty][\Omega_2]},\pabs{g}_{\Wkp[1][\infty][\Omega_2]}\pabs{f}_{\Wkpm[1][\infty][\Omega_1][m]}\right\}.
			\]
\end{corollary}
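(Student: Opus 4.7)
The plan is to reduce the corollary to an application of Theorem~\ref{theorem:Lipschitz} (in both directions) combined with the elementary composition rule for Lipschitz functions given in Lemma~\ref{lemma:lipschitz_comp}. Since the statement of the corollary is about specific Lipschitz representatives, we can work pointwise without having to separately justify that the weak chain rule applies.

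First, I would extract Lipschitz constants for $f$ and $g$ from their Sobolev semi-norms. Applying Theorem~\ref{theorem:Lipschitz} componentwise to $f=(f_1,\ldots,f_m)$ on the open, bounded, convex set $\Omega_1$ gives that each $f_i$ is $\| \lvert\nabla f_i\rvert \|_{L^\infty(\Omega_1)}$-Lipschitz, and \eqref{eq:gradient_equivalence} bounds this by $\sqrt{d}\,|f_i|_{W^{1,\infty}(\Omega_1)} \leq \sqrt{d}\,|f|_{W^{1,\infty}(\Omega_1;\R^m)}$. Combining the components (losing at most a factor $\sqrt{m}$) yields a Lipschitz constant $L_1 \leq C_1(d,m)\,|f|_{W^{1,\infty}(\Omega_1;\R^m)}$ for $f:\Omega_1\to\R^m$ with respect to the Euclidean distance. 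The same argument on $\Omega_2$ for the scalar function $g$ produces $L_2 \leq \sqrt{m}\,|g|_{W^{1,\infty}(\Omega_2)}$.

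Next, I would invoke Lemma~\ref{lemma:lipschitz_comp}: since $\ran f\subset \Omega_2$, the composition $g\circ f:\Omega_1\to\R$ is $(L_1 L_2)$-Lipschitz, hence in particular
\[
\lip(g\circ f) \leq C_2(d,m)\,|g|_{W^{1,\infty}(\Omega_2)}\,|f|_{W^{1,\infty}(\Omega_1;\R^m)}.
\]
Moreover, $g\circ f$ is bounded on $\Omega_1$ because $\|g\circ f\|_{L^\infty(\Omega_1)}\leq \|g\|_{L^\infty(\Omega_2)}<\infty$. Now I apply the converse direction of Theorem~\ref{theorem:Lipschitz} on the open convex $\Omega_1$ to conclude $g\circ f\in W^{1,\infty}(\Omega_1)$ with $\|\,\lvert\nabla(g\circ f)\rvert\,\|_{L^\infty(\Omega_1)}\leq \lip(g\circ f)$, and one final use of \eqref{eq:gradient_equivalence} relates this to the semi-norm $|g\circ f|_{W^{1,\infty}(\Omega_1)}$. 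This yields the semi-norm bound with a constant $C=C(d,m)$.

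Finally, for the full-norm bound I would simply combine the semi-norm estimate with the pointwise inequality $\|g\circ f\|_{L^\infty(\Omega_1)}\leq\|g\|_{L^\infty(\Omega_2)}$ via $\|\cdot\|_{W^{1,\infty}}=\max\{\|\cdot\|_{L^\infty},|\cdot|_{W^{1,\infty}}\}$. There is no genuine obstacle here: the only non-trivial ingredient, namely that being Lipschitz is equivalent to lying in $W^{1,\infty}$ on a convex domain with matching constants, is already supplied by Theorem~\ref{theorem:Lipschitz}, and the rest is bookkeeping of dimensional constants arising from converting between the Sobolev semi-norm and the $L^\infty$ norm of the Euclidean length of the gradient.
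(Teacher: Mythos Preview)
Your proposal is correct and follows essentially the same argument as the paper: both proofs use Theorem~\ref{theorem:Lipschitz} to extract Lipschitz constants for $f$ and $g$ from their $W^{1,\infty}$ semi-norms, apply Lemma~\ref{lemma:lipschitz_comp} to bound the Lipschitz constant of $g\circ f$, and then invoke the converse direction of Theorem~\ref{theorem:Lipschitz} together with \eqref{eq:gradient_equivalence} to pass back to the Sobolev semi-norm. The only cosmetic difference is that the paper defers all the $\sqrt{d}$, $\sqrt{m}$ conversions to a single chain of inequalities at the end, whereas you absorb them into constants $C_1(d,m)$ and $C_2(d,m)$ along the way.
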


\begin{proof}
	We set for $j=1,\ldots,m$
	\[
		L_j:=\norm{\normabsspace\pabs{\nabla f_j}\normabsspace}_{\Lp[\infty][\Omega_1]}\quad\text{and}\quad L_f:=(L_1,\ldots,L_m).
		\]
		It then follows from Theorem \ref{theorem:Lipschitz} that $f_j$ is $L_j$-Lipschitz and hence, $f$ is $\pabs{L_f}$-Lipschitz. In the same manner we define 
		\[
			L_g:=\norm{\normabsspace\pabs{\nabla g}\normabsspace}_{\Lp[\infty][\Omega_2]}
			\]
			and have that $g$ is $L_g$-Lipschitz. With Lemma \ref{lemma:lipschitz_comp} we conclude that also the composition $g\circ f$ is $(\pabs{L_f}\cdot L_g)$-Lipschitz. Since $g$ is bounded and hence also $g\circ f$ applying Theorem \ref{theorem:Lipschitz} again yields that $g\circ f\in \Wkp[1][\infty][\Omega_1]$ and furthermore 
			\begin{equation}\label{eq:sob_gradient_chain}
				\norm{\normabsspace\pabs{\nabla (g\circ f)}\normabsspace}_{\Lp[\infty][\Omega_1]}\leq \pabs{L_f}\cdot L_g.
			\end{equation}
		We get
	\begin{align*}
		\pabs{g\circ f}_{\Wkp[1][\infty][\Omega_1]}&\leq\norm{\normabsspace\pabs{\nabla (g\circ f)}\normabsspace}_{\Lp[\infty][\Omega_1]}\\
		&\leq\pabs{L_f}\cdot L_g\\
		&\leq \sqrt m \norm{L_f}_{\ell^\infty}\cdot \sqrt m \pabs{g}_{\Wkp[1][\infty][\Omega_2]}\\
		&\leq \sqrt d  m \pabs{f}_{\Wkpm[1][\infty][\Omega_1][m]}\cdot \pabs{g}_{\Wkp[1][\infty][\Omega_2]},
	\end{align*}
	where we used Equation \eqref{eq:gradient_equivalence} in the first step and the previous estimate \eqref{eq:sob_gradient_chain} in the second. The third step is due to estimating the $\ell^2$ norm with the $\ell^\infty$ norm on $\R^m$, and applying \eqref{eq:gradient_equivalence} to the definition of $L_g$. The last step uses Equation \eqref{eq:gradient_equivalence} again. This shows the first estimate. The second estimate is now an easy consequence of the first.
\end{proof}


\subsection{Product estimate}\label{subsec:product}
In the following lemma, we show an estimate for the semi-norm of a product of weakly differentiable functions.
\begin{lemma}\label{lemma:product_rule_bound_p}
	Let $f\in \Wkp[1][\infty][\Omega]$ and $g\in \Wkp[1][p][\Omega]$ with $1\leq p\leq \infty$, then $fg\in\Wkp[1][p][\Omega]$ and there exists a constant $C=C(d,p)>0$ such that
	\[
		\pabs{fg}_{\Wkp[1][p]}\leq C \left(\pabs{f}_{\Wkp[1][\infty]} \norm{g}_{\Lp} + \norm{f}_{\Linf} \pabs{g}_{\Wkp[1][p]}\right).
		\]
		For $p=\infty$, we have $C=1$.
\end{lemma}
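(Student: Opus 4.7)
The plan is to reduce the claim to the classical Leibniz rule for weak derivatives together with straightforward norm estimates on products. First I would observe that $fg \in L^p(\Omega)$ is immediate from $\|fg\|_{L^p}\leq \|f\|_{L^\infty}\|g\|_{L^p}$, so the only real content of the lemma concerns the weak partial derivatives.

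Next, I would invoke the Leibniz rule for weak derivatives in the mixed-regularity setting: for $f\in W^{1,\infty}(\Omega)$ and $g\in W^{1,p}(\Omega)$ with $1\leq p\leq\infty$, one has
\[
\partial_i(fg) \;=\; (\partial_i f)\,g + f\,(\partial_i g), \qquad i=1,\ldots,d,
\]
in the sense of distributions. This is a classical fact, and in the present setting it can be verified by mollifying $f$ to obtain $f_\epsilon\in C^\infty\cap W^{1,\infty}$ with $\|\nabla f_\epsilon\|_{L^\infty}\leq \|\nabla f\|_{L^\infty}$ and $f_\epsilon\to f$ uniformly on compact subsets (using Theorem~\ref{theorem:Lipschitz}), applying the classical product rule to $f_\epsilon g$, and passing to the limit in the distributional identity against an arbitrary test function $\varphi\in C_c^\infty(\Omega)$. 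Once this identity is established, the right-hand side is in $L^p(\Omega)$ by the estimates below, so $fg\in W^{1,p}(\Omega)$.

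For the quantitative bound, I would estimate each summand via
\[
\|(\partial_i f)\,g\|_{L^p} \leq \|\partial_i f\|_{L^\infty}\|g\|_{L^p} \leq |f|_{W^{1,\infty}}\|g\|_{L^p}, \qquad \|f\,(\partial_i g)\|_{L^p}\leq \|f\|_{L^\infty}\|\partial_i g\|_{L^p}.
\]
In the case $p=\infty$, the triangle inequality together with $|fg|_{W^{1,\infty}}=\max_i\|\partial_i(fg)\|_{L^\infty}$ immediately yields the claim with constant $C=1$. For $1\leq p<\infty$, I would combine the two estimates above with the elementary inequality $(a+b)^p\leq 2^{p-1}(a^p+b^p)$, so that
\[
|fg|_{W^{1,p}}^p \;=\; \sum_{i=1}^d \|\partial_i(fg)\|_{L^p}^p \;\leq\; 2^{p-1}\!\left(d\,|f|_{W^{1,\infty}}^p\|g\|_{L^p}^p + \|f\|_{L^\infty}^p|g|_{W^{1,p}}^p\right),
\]
and then extract the $p$-th root using the subadditivity $(x+y)^{1/p}\leq x^{1/p}+y^{1/p}$. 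This produces exactly the stated inequality with $C(d,p)=2^{(p-1)/p}\,d^{1/p}$.

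The only nontrivial step is justifying the Leibniz rule in this mixed-regularity setting, since neither factor need be smooth. Everything after that is an application of Hölder-type bounds and the triangle inequality on $L^p$, and the dependence of $C$ on $d$ and $p$ (with $C=1$ when $p=\infty$) is explicit from the computation above.
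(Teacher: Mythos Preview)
Your proposal is correct and follows essentially the same approach as the paper: invoke the Leibniz rule for weak derivatives, then bound each summand via $\|(\partial_i f)g\|_{L^p}\leq|f|_{W^{1,\infty}}\|g\|_{L^p}$ and $\|f(\partial_i g)\|_{L^p}\leq\|f\|_{L^\infty}\|\partial_i g\|_{L^p}$. The only cosmetic differences are that the paper cites \cite[Chapter~7.3]{gilbarg1998elliptic} for the product rule rather than sketching a mollification argument, and it combines the $d$ terms via the inequality $\big(\sum_i a_i^p\big)^{1/p}\leq\sum_i a_i$ instead of your use of $(a+b)^p\leq 2^{p-1}(a^p+b^p)$; both routes produce a constant $C=C(d,p)$ with $C=1$ when $p=\infty$.
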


\begin{proof}
	Clearly, $fg, fD^i g+gD^i f\in L^1_{\text{loc}}(\Omega)$ so that the product formula in \cite[Chapter 7.3]{gilbarg1998elliptic} yields that for the weak derivatives of $fg$ it holds 
	\[
		D^i(fg)= (D^i f)g+ f (D^i g)
		\]
		for $i=1,2,\ldots,d$. Thus, we have
		\begin{align*}
			\pabs{fg}_{\Wkp[1]}&=\left( \sum_{i=1}^d \norm*{(D^i f)g+ f (D^i g)}_{\Lp}^p\right)^{1/p}\\
			&\leq \sum_{i=1}^d \norm*{(D^i f)g+ f (D^i g)}_{\Lp}\\
			&\leq \sum_{i=1}^d \norm*{(D^i f)g}_{\Lp}+ \norm{f (D^i g)}_{\Lp}.
		\end{align*}
		Note that we have 
		\[
			\norm*{(D^i f)g}_{\Lp}\leq \norm{D^i f}_{\Lp[\infty]} \norm{g}_{\Lp}\leq \pabs{ f}_{\Wkp[1][\infty]} \norm{g}_{\Lp}
			\]
			and
			\[
				\norm{f (D^i g)}_{\Lp}\leq \norm{f}_{\Lp[\infty]}\norm{D^i g}_{\Lp}.
				\]
				Finally, we get
		\begin{align*}
			\pabs{fg}_{\Wkp[1]}&\leq d\pabs{ f}_{\Wkp[1][\infty]} \norm{g}_{\Lp}+\norm{f}_{\Lp[\infty]}\sum_{i=1}^d \norm{D^i g}_{\Lp}\\
			&\leq C \left(\pabs{f}_{\Wkp[1][\infty]} \norm{g}_{\Lp} + \norm{f}_{\Linf} \pabs{g}_{\Wkp[1][p]}\right),
		\end{align*}
		where $C=C(d,p)>0$ is a constant. Furthermore, it is clear that also $fg\in\Lp$.
\end{proof}

\subsection{Averaged Taylor polynomial}\label{subsec:taylor}
In this subsection, we develop a polynomial approximation in the spirit of Taylor polynomials but appropriate for Sobolev spaces. A polynomial approximation $P_f$ of a function $f\in\Fndp$ is the first step towards an approximation of $f$ realized by a neural network in the proof of Theorem~\ref{thm:main}.

A reference for this entire subsection is \cite[Chapter 4.1]{brenner2007mathematical}.


\begin{definition}[averaged Taylor polynomial]\label{def:taylor}
	Let $n\in\N$, $1\leq p\leq \infty$ and $f\in\Wkp[n-1]$, and let $x_0\in\Omega$, $r>0$ such that for the ball $B:=B_{r,\pabs{\cdot}}(x_0)$ it holds that $B \ksub \Omega$. The corresponding \emph{Taylor polynomial of order $n$ of $f$ averaged over $B$} is defined for $x\in\Omega$ as 
	\begin{equation}\label{eq:sob_taylor}
		Q^n f(x):=\int_B T^n_y f(x)\phi(y)dy,
	\end{equation}
		where
	\begin{equation}\label{eq:sob_taulor_sub}
		T^n_y f(x) := \sum_{\pabs{\alpha}\leq n-1}\frac{1}{\alpha!}D^\alpha f(y)(x-y)^\alpha
	\end{equation}
	and $\phi$ is an arbitrary cut-off function supported in $\overline{B}$, i.e.\ 
	\[
		\phi\in C_c^\infty(\R^d) \quad\text{with}\quad\phi(x)\geq 0\text{ for all }x\in\R^d, \quad\supp \phi = \overline{B}\quad \text{and} \quad\int_{\R^n}\phi(x)dx=1.
		\]
\end{definition}

A cut-off function as used in the previous definition always exists. A possible choice is
		\begin{equation*}
			\psi(x)=\begin{cases}
				e^{-\left(1-(\pabs{x-x_0}/r)^2\right)^{-1}}, & \text{if }\pabs{x-x_0}<r\\
					0,&\text{else}
			\end{cases}
		\end{equation*}
		normalized by $\int_{\R^d}\psi(x)dx$. Next, we derive some properties of the averaged Taylor polynomial.

\begin{remark}\label{remark:sob_taylor_lin}
		From the linearity of the weak derivative we can easily conclude that the averaged Taylor polynomial is linear in $f$.
\end{remark}

Recall that the averaged Taylor polynomial is defined via an integral and some cut-off function (cf.~\eqref{eq:sob_taylor}) that perform an averaging of a polynomial expression (cf.~\eqref{eq:sob_taulor_sub}). Additionally, the following lemma shows that the averaged Taylor polynomial of order $n$ is indeed a polynomial of degree less than $n$ in $x$.
\begin{lemma}\label{prop:taylor_is_polynom}
	Let $n\in\N$, $1\leq p\leq \infty$ and $f\in\Wkp[n-1]$, and let $x_0\in\Omega$, $r>0,R\geq 1$ such that for the ball $B:=B_{r,\pabs{\cdot}}(x_0)$ it holds that $B \ksub \Omega$ and $B\subset B_{R,\norm{\cdot}_{\ell^\infty}}(0)$. Then the Taylor polynomial of order $n$ of $f$ averaged over $B$ can be written as
	\[
		Q^n f(x)=\sum_{\pabs{\alpha}\leq n-1} c_{\alpha}x^\alpha
		\]
		for $x\in\Omega$. 
		
		Moreover, there exists a constant $c=c(n,d,R)>0$ such that the coefficients $c_\alpha$ are bounded with $\pabs{c_\alpha}\leq c r^{-d/p}\norm{f}_{\Wkp[n-1][p][\Omega]}$ for all $\alpha$ with $\pabs{\alpha}\leq n-1$.
\end{lemma}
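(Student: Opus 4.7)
The plan is to expand the polynomial $(x-y)^\alpha$ appearing in $T^n_y f(x)$ via the multivariate binomial theorem, interchange the resulting finite sum with the integration against $\phi$ defining $Q^n f$, and then read off the coefficient of each monomial $x^\beta$. Estimating these coefficients by H\"older's inequality will then yield the claimed bound.

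Concretely, first I would write
\[
(x-y)^\alpha \;=\; \sum_{\beta\leq\alpha}\binom{\alpha}{\beta}(-1)^{|\alpha-\beta|}\, x^\beta\, y^{\alpha-\beta},
\]
substitute into the definition of $T^n_y f(x)$, and swap summation with integration in \eqref{eq:sob_taylor}. Reindexing by $\beta$ with $|\beta|\leq n-1$ gives
\[
Q^n f(x) \;=\; \sum_{|\beta|\leq n-1} c_\beta\, x^\beta, \qquad c_\beta \;=\; \sum_{\substack{\beta\leq\alpha\\ |\alpha|\leq n-1}} \frac{(-1)^{|\alpha-\beta|}}{\alpha!}\binom{\alpha}{\beta}\int_B D^\alpha f(y)\, y^{\alpha-\beta}\phi(y)\, dy,
\]
which establishes the polynomial form; the integrals are well-defined because each $D^\alpha f \in L^p(\Omega)$ and $\phi \in C_c^\infty(\overline B)$.

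Second, I would bound each coefficient as follows. Since $B \subset B_{R,\|\cdot\|_{\ell^\infty}}(0)$ and $R\geq 1$, we have $|y^{\alpha-\beta}|\leq R^{|\alpha-\beta|}\leq R^{n-1}$ for every $y\in B$. Taking $\phi$ to be the canonical rescaled bump $\phi(\cdot) = r^{-d}\,\widetilde\phi((\cdot-x_0)/r)$ of a fixed unit-ball bump $\widetilde\phi$, a straightforward change of variables yields $\|\phi\|_{L^{p'}(B)}\leq C(d)\, r^{-d/p}$, where $p'$ is the conjugate exponent (with the convention $\|\phi\|_{L^1(B)}=1$ when $p=\infty$). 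H\"older's inequality together with $\|D^\alpha f\|_{L^p(B)}\leq \|f\|_{W^{n-1,p}(\Omega)}$ then bounds each integral by $C(d)\, R^{n-1}\, r^{-d/p}\|f\|_{W^{n-1,p}(\Omega)}$. Since only finitely many multiindices $\alpha$ with $|\alpha|\leq n-1$ contribute, all combinatorial factors collapse into a single constant $c=c(n,d,R)>0$, yielding $|c_\beta|\leq c\, r^{-d/p}\,\|f\|_{W^{n-1,p}(\Omega)}$ for every $|\beta|\leq n-1$.

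I do not expect a genuine obstacle: the argument is essentially the multivariate binomial theorem combined with Fubini and H\"older. The only point that requires care is the scaling behaviour of $\|\phi\|_{L^{p'}(B)}$ in $r$, which is precisely what produces the $r^{-d/p}$ factor in the final estimate and explains the otherwise surprising $r$-dependence in the stated bound.
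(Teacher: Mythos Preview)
Your proposal is correct and follows essentially the same route as the paper's proof: expand $(x-y)^\alpha$ multinomially, collect the resulting coefficients of $x^\beta$ as integrals over $B$, and bound those integrals via H\"older together with the scaling $\|\phi\|_{L^{p'}(B)}\lesssim r^{-d/p}$. The only cosmetic differences are that the paper writes the expansion with generic coefficients $a_{(\gamma,\beta)}$ bounded by $\binom{\gamma+\beta}{\gamma}$ (whereas you invoke the binomial theorem directly), and it obtains the $L^{p'}$ bound on $\phi$ via the interpolation estimate $\|\phi\|_{L^{p'}}\leq\|\phi\|_{L^1}^{1/p'}\|\phi\|_{L^\infty}^{1/p}$ combined with $\|\phi\|_{L^\infty}\leq c\,r^{-d}$, rather than by the change of variables you suggest.
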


\begin{proof}
	The first part of the proof of this lemma follows closely the chain of arguments in \cite[Equations~(4.1.5) - (4.1.8)]{brenner2007mathematical}. We write for $\alpha\in\N_0^d$ 
	\[
		(x-y)^\alpha=\prod_{i=1}^d (x_i-y_i)^{\alpha_i} = \sum_{\substack{\gamma,\beta \in \N_0^d,\vspace{0.2em}\\\gamma+\beta=\alpha}} a_{(\gamma,\beta)} x^\gamma y^\beta,
		\]
		where $a_{(\gamma,\beta)}\in\R$ are suitable constants with 
		\begin{equation}\label{eq:sob_multinomial}
			\pabs*{a_{(\gamma,\beta)}}\leq \left(\begin{array}{c}
				\gamma+\beta\\
				\gamma
			\end{array}
			\right)=\frac{(\gamma+\beta)!}{\gamma\,!\;\beta\,!}
		\end{equation}
		in multi-index notation. Then, combining Equation \eqref{eq:sob_taylor} and \eqref{eq:sob_taulor_sub} yields
	\begin{align*}
		Q^n f(x) &= \sum_{\pabs{\alpha}\leq n-1}\sum_{\gamma +\beta=\alpha} \frac{1}{\alpha!}a_{(\gamma,\beta)}x^\gamma\int_B D^\alpha f(y)y^\beta\phi(y)dy\\
		&=\sum_{\pabs{\gamma}\leq n-1}x^\gamma \underbrace{\sum_{\pabs{\gamma+\beta}\leq n-1}\frac{1}{(\gamma+\beta)!}a_{(\gamma,\beta)}\int_B D^{\gamma+\beta} f(y)y^\beta\phi(y)dy}_{=:c_\gamma}.\numberthis\label{eq:c_gamma}
	\end{align*}
	For the second part, note that 
	\begin{align*}
		\pabs*{\int_B D^{\gamma+\beta} f(y)y^\beta\phi(y)dy}&\leq\int_B \pabs*{D^{\gamma+\beta} f(y)}\pabs*{y^\beta}\pabs{\phi(y)}dy\\
		&\leq R^{\pabs{\beta}} \norm{f}_{\Wkp[n-1][p][B]} \norm{\phi}_{\Lp[q][B]}\numberthis\label{eq:sob_taylor_coeff},
	\end{align*}
	where we used $B\subset B_{R,\norm{\cdot}_{\ell^\infty}}(0)$ and the H{\"o}lder's inequality with $1/q=1-1/p$. Next, since $\phi\in L^1(B)\cap L^\infty(B)$ and $\norm{\phi}_{L^1}=1$ we have (see \cite[Chapter~X.4 Exercise~4]{amann2008analysis})
\[
	\norm{\phi}_{L^q}\leq \norm{\phi}_{L^1}^{1/q}\norm{\phi}_{L^\infty}^{1-1/q}=\norm{\phi}_{L^\infty}^{1/p}.
\]
	Combining the last estimate with Equation~\eqref{eq:sob_taylor_coeff} yields
	\begin{align*}
		\pabs*{\int_B D^{\gamma+\beta} f(y)y^\beta\phi(y)dy}&\leq R^{n-1} \norm{f}_{\Wkp[n-1][p]}\norm{\phi}_{\Lp[\infty][B]}^{1/p}\\
		&\leq c R^{n-1} \norm{f}_{\Wkp[n-1][p]}r^{-d/p}\numberthis\label{eq:sob_int},
	\end{align*}
	 where the second step follows from $\norm{\phi}_{L^\infty}\leq c r^{-d}$ for some constant $c=c(d)>0$ (see~\cite[Section~4.1]{brenner2007mathematical}).
	To estimate the absolute value of the coefficients $c_\gamma$ (defined in Equation~\ref{eq:c_gamma}), we have
	\begin{align*}
		\pabs{c_\gamma}&\leq \sum_{\pabs{\gamma+\beta}\leq n-1}\frac{1}{(\gamma+\beta)!}\pabs*{a_{(\gamma,\beta)}}\pabs*{\int_B D^{\gamma+\beta} f(y)y^\beta\phi(y)dy}\\
		&\leq  c R^{n-1} \norm{f}_{\Wkp[n-1][p][\Omega]} r^{-d/p}\sum_{\pabs{\gamma+\beta}\leq n-1}\frac{1}{\gamma!\beta!}=c'\norm{f}_{\Wkp[n-1][p][\Omega]} r^{-d/p}.
	\end{align*}
	Here, the second step used Equation \eqref{eq:sob_int} together with Equation \eqref{eq:sob_multinomial}, and $c'=c'(n,d,R)>0$ is a constant.
\end{proof}

The next step is to derive approximation properties of the averaged Taylor polynomial. To this end, recall that for the (standard) Taylor expansion of some function $f$ defined on a domain $\Omega$ in $x_0$ to yield an approximation at some point $x_0 + h$ the whole path $x_0 +th$ for $0\leq t\leq 1$ has to be contained in $\Omega$ (see \cite[Theorem~6.8.10]{marsden1974elementary}). In case of the averaged Taylor polynomial the expansion point $x_0$ is replaced by a ball $B$ and we require that the path between each $x_0\in B$ and each $x\in\Omega$ is contained in $\Omega$. This geometrical condition is made precise in the following definition.
\begin{definition}
	Let $\Omega,B\subset \R^d$. Then $\Omega$ is called \emph{star-shaped with respect to $B$} if,
	\[
		\overline{\co} \left(\{x\}\cup B\right)\subset\Omega\quad\text{for all }x\in\Omega.
		\]
\end{definition}

The next definition introduces a geometric notion which becomes important when given a family of subdivisions $\mathcal{T}^h$, $0<h\leq 1$ of a domain $\Omega$ which becomes finer for smaller $h$. One typically needs to control the nondegeneracy of $(\mathcal{T}^h)_h$ which can be done e.g.\ with a uniformly bounded chunkiness parameter.
\begin{definition}
	Let $\Omega\subset\R^d$ be bounded. We define the set
	\[
		\mathcal{R}:=\left\{r>0\,: \begin{array}{l}
			\text{there exists }x_0\in\Omega \text{ such that }\Omega\text{ is}\\[0.3em]
				\text{star-shaped with respect to }B_{r,\pabs{\cdot}}(x_0)
			\end{array}
			\right\}. 
		\]
		If $\mathcal{R}\neq\emptyset$, then we define
	\[
		\starmax:=\sup\mathcal{R}\quad\text{and call}\quad\gamma:=\frac{\diam(\Omega)}{\starmax}
		\]
	the \emph{chunkiness parameter} of $\Omega$.
\end{definition}

To emphasize the dependence on the set $\Omega$, we will occasionally write $\starmax(\Omega)$ and $\gamma(\Omega)$.

Finally, the next lemma shows approximation properties of the averaged Taylor polynomial.
\begin{lemma}[Bramble-Hilbert]\label{lemma:bramble_hilbert}
	Let $\Omega\subset \R^d$ be open and bounded, $x_0\in\Omega$ and $r>0$ such that $\Omega$ is star-shaped with respect to $B:=B_{r,\pabs{\cdot}}(x_0)$, and $r>(1/2)\starmax$. Moreover, let $n\in\N$, $1\leq p\leq\infty$ and denote by $\gamma$ the chunkiness parameter of $\Omega$. Then there exists a constant $C=C(n,d,\gamma)>0$ such that for all $f\in\Wkp[n]$
	\begin{equation*}
		\pabs{f-Q^n f}_{\Wkp[k]} \leq C h^{n-k}\pabs{f}_{\Wkp[n]} \quad\text{for } k=0,1,\ldots,n,
	\end{equation*}
	where $Q^n f$ denotes the Taylor polynomial of order $n$ of $f$ averaged over $B$ and $h=\diam(\Omega)$.
\end{lemma}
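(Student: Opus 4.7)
The plan is to reduce the general statement to the $k=0$ base case via a commutation identity, and then prove the base case by deriving an integral kernel representation of the Taylor remainder.

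\textbf{Reduction step.} First I would establish the commutation identity $D^\alpha Q^n f = Q^{n-\pabs{\alpha}}(D^\alpha f)$ for every multi-index $\alpha$ with $\pabs{\alpha}\leq n-1$. This follows from differentiating under the integral in~\eqref{eq:sob_taylor}, using $D^\alpha_x (x-y)^\beta = \beta!/(\beta-\alpha)!\,(x-y)^{\beta-\alpha}$ and reindexing the sum over $\beta$; no integration by parts is needed since the derivative falls only on the $x$-variable. For $\pabs{\alpha}=n$ the situation is trivial: by Lemma~\ref{prop:taylor_is_polynom} the averaged Taylor polynomial $Q^n f$ has degree $\leq n-1$, hence $D^\alpha Q^n f\equiv 0$ and the claim reads $\pabs{f}_{\Wkp[n]}\leq C \pabs{f}_{\Wkp[n]}$. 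For $k<n$ the commutation identity gives
\[
\sum_{\pabs{\alpha}=k}\norm{D^\alpha(f-Q^n f)}_{\Lp}^p \;=\; \sum_{\pabs{\alpha}=k}\norm{D^\alpha f-Q^{n-k}(D^\alpha f)}_{\Lp}^p,
\]
so it suffices to prove the $L^p$-estimate $\norm{g-Q^m g}_{\Lp}\leq C h^m \pabs{g}_{\Wkp[m]}$ for $g\in\Wkp[m]$, applied with $m:=n-k$ and $g:=D^\alpha f$.

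\textbf{Base case via the integral remainder.} By density (Meyers--Serrin, together with continuity of $Q^m$ on $\Wkp[m-1]$ via Lemma~\ref{prop:taylor_is_polynom}) it suffices to consider $g\in C^\infty(\overline{\Omega})$. Since $\Omega$ is star-shaped with respect to $B$, the entire segment $y+t(x-y)$ for $t\in[0,1]$ lies in $\Omega$ whenever $y\in B$, $x\in\Omega$, so Taylor's theorem with integral remainder gives
\[
g(x)-T^m_y g(x) \;=\; m\sum_{\pabs{\alpha}=m}\frac{(x-y)^\alpha}{\alpha!}\int_0^1 (1-t)^{m-1} D^\alpha g(y+t(x-y))\,dt.
\]
Multiplying by $\phi(y)$, integrating over $B$, swapping the order of integration, and substituting $z=y+t(x-y)$ (with Jacobian $t^{-d}$ for fixed $x,t$) produces the kernel representation
\[
g(x)-Q^m g(x) \;=\; \sum_{\pabs{\alpha}=m}\int_{\Omega} K_\alpha(x,z)\, D^\alpha g(z)\,dz,
\]
with $\pabs{K_\alpha(x,z)}\leq C_{m,d}\norm{\phi}_{\Lp[\infty]}\,\pabs{x-z}^{m-d}$; the factor $t^{-d}$ from the Jacobian is absorbed by $\pabs{x-y}^m t^m$ and the weight $(1-t)^{m-1}$, which is integrable precisely because $m\geq 1$. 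Using $\norm{\phi}_{\Lp[\infty]}\leq c\,r^{-d}$ and $\pabs{x-z}\leq h$, Minkowski's integral inequality (a direct sup estimate for $p=\infty$) yields $\norm{g-Q^m g}_{\Lp}\leq C(m,d)\, r^{-d} h^m \cdot V \cdot \pabs{g}_{\Wkp[m]}$, where $V$ is a convolution-type volume factor bounded by $c_d h^d$.

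\textbf{Main obstacle.} The delicate point is tracking the geometric constants so that only the chunkiness parameter $\gamma$ enters. Combining $r>\tfrac{1}{2}\starmax$ with the definition $\gamma=h/\starmax$ gives $r^{-1}\leq 2\gamma/h$, so the factor $r^{-d}$ is absorbed into $h^{-d}\gamma^d$; together with the volume factor $V\leq c_d h^d$ the powers of $h$ collapse to $h^m$ and the prefactor depends only on $n,d,\gamma$. Repackaging $D^\alpha g \rightsquigarrow D^{\alpha+\beta}f$ for all $\pabs{\beta}=k$ and summing as in the reduction step yields the claimed estimate $\pabs{f-Q^n f}_{\Wkp[k]}\leq C h^{n-k}\pabs{f}_{\Wkp[n]}$.
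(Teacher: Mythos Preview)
The paper does not prove this lemma; it simply cites \cite[Lemma~4.3.8]{brenner2007mathematical}. Your outline is precisely the Brenner--Scott argument (commutation identity $D^\alpha Q^n f=Q^{n-|\alpha|}(D^\alpha f)$ to reduce to $k=0$, then the integral Taylor remainder plus a Riesz-potential kernel estimate), so the strategy is correct and matches the reference the paper defers to.

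However, the kernel computation as you describe it is not right, and the errors happen to cancel rather than actually justify the bound. With the substitution $z=y+t(x-y)=(1-t)y+tx$ (for fixed $x,t$) the Jacobian is $(1-t)^{-d}$, not $t^{-d}$; combined with $(x-y)^\alpha=(x-z)^\alpha(1-t)^{-m}$ and the weight $(1-t)^{m-1}$, the inner $t$-integral carries the factor $(1-t)^{-1-d}$, which is \emph{not} integrable near $t=1$. Nothing of the form ``$|x-y|^m t^m$'' appears, and the integrability of $(1-t)^{m-1}$ plays no role here. What actually controls the singularity is the support of $\phi$: since $y=(z-tx)/(1-t)\in B\subset\Omega$ forces $|x-z|=(1-t)|x-y|\leq (1-t)h$, the $t$-integral is restricted to $1-t\geq |x-z|/h$, and one obtains
\[
|K_\alpha(x,z)|\;\leq\; C_{m,d}\,\|\phi\|_{L^\infty}\,h^{d}\,|x-z|^{m-d},
\]
with an explicit factor $h^d$ that is missing from your stated kernel bound. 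Your ``convolution-type volume factor $V\leq c_d h^d$'' then has no honest origin: applying the Schur test to your claimed kernel $C\,r^{-d}|x-z|^{m-d}$ would give $\|g-Q^m g\|_{L^p}\lesssim r^{-d}h^m|g|_{W^{m,p}}$, and after $r^{-1}\leq 2\gamma/h$ this becomes $\gamma^d h^{m-d}$, the wrong power. With the correct kernel bound, the Schur test yields $r^{-d}h^{d}\cdot c_d h^{m}$, and the geometric substitution $r^{-d}\leq(2\gamma)^d h^{-d}$ then collapses the powers to $C(n,d,\gamma)h^{m}$ as required. So the fix is: correct the Jacobian, identify the support cutoff as the source of the Riesz kernel, and drop the spurious $V$.
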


A proof can be found in \cite[Lemma 4.3.8]{brenner2007mathematical}.

\subsection{Fractional Sobolev spaces}
In this subsection, we derive a generalization of Sobolev spaces characterized by fractional-order derivatives using two different approaches. First, we interpolate integer-valued Sobolev spaces, and secondly, we define an intrinsic norm. Both approaches are then shown to be equivalent under some regularity condition on the domain $\Omega$.

We start by defining fractional-order Sobolev spaces $W^{s,p}$ for $0<s<1$ via Banach space interpolation (see~Section~\ref{app:interpolation}).
\begin{definition}\label{def:fractional_interpol}
	Let $0<s<1$ and $1\leq p\leq \infty$, then we set
	\[
		\tWkp[s][p]:=\icouple[\Lp][\Wkp[1]][s].
		\]
\end{definition}

The next theorem shows that, given suitable regularity conditions of the domain $\Omega$, Definition~\ref{def:fractional_interpol} and Definition~\ref{def:slobodeckij} yield the same spaces with equivalent norms. The theorem can be found in~\cite[Theorem 14.2.3]{brenner2007mathematical} for $1\leq p<\infty$. The case $p=\infty$ is not included in~\cite[Theorem 14.2.3]{brenner2007mathematical} but can be shown with the same technique.
\begin{theorem}\label{thm:interpol_eqiv_hoelder}
	Let $\Omega\subset\R^d$ be a Lipschitz-domain. Then we have for $0<s<1$ and $1\leq p\leq \infty$ that
	\[
	\Wkp[s][p]=\wtilde W^{s,p}(\Omega)
	\]
		with equivalence of the respective norms.
\end{theorem}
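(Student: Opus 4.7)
My plan is to prove the two continuous embeddings separately, reducing the problem from a Lipschitz domain to all of $\R^d$ where translation invariance makes the computations clean. Since the statement and the $1\le p<\infty$ case are covered in Brenner–Scott, I focus the proposal on the $p=\infty$ case, which follows the same scheme.

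\textbf{Step 1 (Reduction to $\Omega=\R^d$).} Because $\Omega$ is a Lipschitz domain, I invoke Stein's extension theorem to obtain a single bounded linear operator $E$ that is simultaneously continuous $L^p(\Omega)\to L^p(\R^d)$ and $W^{1,p}(\Omega)\to W^{1,p}(\R^d)$ for every $1\le p\le\infty$, and whose restriction to $\Omega$ is the identity. Applying Theorem~\ref{thm:interpolation_operator} with the two interpolation couples $(L^p(\Omega),W^{1,p}(\Omega))$ and $(L^p(\R^d),W^{1,p}(\R^d))$, the operator $E$ induces a continuous map on the corresponding real interpolation spaces. Combined with the trivial boundedness of restriction on both $L^\infty$ and $W^{1,\infty}$, and its easy boundedness on the intrinsic Slobodeckij norm (the integral in Definition~\ref{def:slobodeckij} is monotone in $\Omega$, and the $\operatorname{ess\,sup}$ likewise), this lets me transfer both inequalities from $\R^d$ back to $\Omega$.

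\textbf{Step 2 ($W^{s,\infty}(\R^d)\hookrightarrow \widetilde W^{s,\infty}(\R^d)$).} Fix a standard compactly supported mollifier $\rho$ with $\int\rho=1$, and set $\rho_t(x):=t^{-d}\rho(x/t)$ and $u_t:=\rho_t\ast u$. Using $s$-Hölder continuity of (a representative of) $u$ I get
\[
\|u-u_t\|_{L^\infty(\R^d)}\le [u]_{s,\infty}\int|y|^s\rho_t(y)\,dy\le C\,t^s[u]_{s,\infty},
\]
while $\int\nabla\rho_t=0$ together with the change of variables $y=tz$ yields
\[
\|\nabla u_t\|_{L^\infty(\R^d)}\le[u]_{s,\infty}\int|\nabla\rho_t(y)|\,|y|^s\,dy\le C\,t^{s-1}[u]_{s,\infty}.
\]
Using $u=u_t+(u-u_t)$ as a decomposition in the $K$-functional, I obtain $K(t,u)\le C t^s\|u\|_{W^{s,\infty}}$ for $0<t\le 1$, and the trivial bound $K(t,u)\le\|u\|_{L^\infty}$ handles $t\ge 1$. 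Together these give $\sup_{t>0}t^{-s}K(t,u)\le C\|u\|_{W^{s,\infty}}$.

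\textbf{Step 3 ($\widetilde W^{s,\infty}(\R^d)\hookrightarrow W^{s,\infty}(\R^d)$).} For $x\neq y$ set $h:=|x-y|$ and pick a near-optimal decomposition $u=v_h+w_h$ with $\|v_h\|_{L^\infty}+h\,\|w_h\|_{W^{1,\infty}}\le 2K(h,u)$. Then
\[
|u(x)-u(y)|\le 2\|v_h\|_{L^\infty}+h\,\|\nabla w_h\|_{L^\infty}\le 4K(h,u),
\]
so $|u(x)-u(y)|/|x-y|^s\le 4\,h^{-s}K(h,u)\le 4\|u\|_{(L^\infty,W^{1,\infty})_{s,\infty}}$. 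Setting $t=1$ in the $K$-functional also gives $\|u\|_{L^\infty}\le 2K(1,u)\le 2\|u\|_{(L^\infty,W^{1,\infty})_{s,\infty}}$. Combining, I recover the intrinsic Slobodeckij norm.

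\textbf{Anticipated obstacle.} The only nontrivial ingredient is the existence of an extension operator valid simultaneously for $p=\infty$ and $k=0,1$ on Lipschitz domains; I rely on Stein's construction, which enjoys this property. The remaining computations are standard mollification and $K$-functional manipulations, parallel to the $p<\infty$ treatment in Brenner–Scott, with the $L^p$-averages replaced by essential suprema and the integral in $t$ replaced by a supremum.
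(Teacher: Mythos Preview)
Your approach matches what the paper has in mind: it cites Brenner--Scott for $1\le p<\infty$ and states that $p=\infty$ follows ``with the same technique'', which is exactly the extension/mollification/$K$-functional scheme you carry out. Steps~2 and~3 are clean and correct.

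There is one small omission in Step~1. To transfer the embedding $W^{s,\infty}\hookrightarrow\widetilde W^{s,\infty}$ from $\R^d$ back to $\Omega$ you need an extension that is bounded from $W^{s,\infty}(\Omega)$ to $W^{s,\infty}(\R^d)$, i.e.\ bounded on the intrinsic H\"older space, not only on $L^\infty$ and $W^{1,\infty}$; the interpolation theorem (Theorem~\ref{thm:interpolation_operator}) only yields boundedness of $E$ on $\widetilde W^{s,\infty}$, which is the wrong side for this half of the argument. Concretely: starting from $u\in W^{s,\infty}(\Omega)$, your mollification estimate on $\R^d$ needs $[Eu]_{s,\infty,\R^d}$ controlled by $[u]_{s,\infty,\Omega}$. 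This is easy to supply---Stein's operator is in fact bounded on $C^{0,s}$, or more elementarily any bounded $s$-H\"older function on $\Omega$ extends to one on $\R^d$ via McShane's formula $\tilde u(x)=\inf_{y\in\Omega}\bigl(u(y)+[u]_{s,\infty}\,|x-y|^{s}\bigr)$---but it should be stated explicitly, since your ``Anticipated obstacle'' only flags the $k=0,1$ cases.
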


This motivates the following remark.
\begin{remark*}
	We make use of the norm equivalence by only using the $W^{s,p}$ norm in the following section and transferring results obtained by interpolation theory for the $\wtilde W^{s,p}$ norm to the $W^{s,p}$ norm without mentioning it again.
\end{remark*}

\section{Upper bounds for approximations}\label{app:upper_bounds}
The strategy of the proof of Theorem~\ref{thm:main} follows the idea of the proof of Theorem~1 in~\cite{yarotsky2017error}: A polynomial approximation $P_f$ of a function $f\in\Fndp$ is approximated by a function realized by a neural network~$\Phi_{P_f}$. As in~\cite{yarotsky2017error}, we start by constructing a neural network that approximates the square function on the interval $\cube$ up to an approximation error at most $\eps$. In our result the error is measured in the $W^{1,\infty}$ norm (Proposition~\ref{prop:appro_square}). This result is then used to obtain an approximation of a multiplication operator (Proposition~\ref{prop:approximate_multiplication}). Next, we derive a partition of unity (Lemma~\ref{lemma:partition_of_unity}) as a product of univariate functions that can be realized by neural networks. Using this construction we then build a localized Taylor approximation for a function $f\in\Fndp$ in the $L^p$ and $W^{1,p}$ norm. An interpolation argument shows that same construction can be used for an approximation in the $W^{s,p}$ norm, where $0\leq s\leq 1$ (Lemma~\ref{lemma:polynomial_approximation}). In the next step, we lay the foundation for approximating products of the partition of unity and monomials, i.e.\ localized monomials, with ReLU networks (Lemma~\ref{lemma:network_multiplikation}) in the $W^{k,\infty}$ norm for $k=0,1$. Via an embedding this result this result is then used to approximate the localized polynomials in $W^{k,p}$ with ReLU networks (Lemma~\ref{lemma:network_polynomial_approximation}).

All Sobolev spaces encountered in the following proofs are defined on open, bounded, and convex domains which are, in particular, Lipschitz-domains (see \cite[Corollary 1.2.2.3]{grisvard1985elliptic}).

The results presented in the following two propositions can be found in a similar way in~\cite[Proposition~3.1]{schwab2018deep}. However, since our results contain some minor extensions we decided to give the proof here for the sake of completeness. In detail, Proposition~\ref{prop:approximate_multiplication}~(ii) and~(iii) are not shown in~\cite[Proposition~3.1]{schwab2018deep}.

In \cite[Proposition 2]{yarotsky2017error}, a network is constructed that approximates the square function $x\to x^2$ on the interval $\cube$ in the $L^\infty$ norm. Interestingly, the same construction can be used when measuring the approximation error in the $W^{1,\infty}$ norm. In particular, the depth and the number of weights and neurons of the network do not grow asymptotically faster to satisfy the approximation accuracy with respect to this stronger norm. 
\begin{proposition}\label{prop:appro_square}
	There exist constants $c_1, c_2, c_3,c_4>0$, such that for all $\epsilon\in\epsin$ there is a neural network $\appsq_\eps$ with at most $c_1\cdot \log_2(\nicefrac{1}{\eps})$ nonzero weights, at most $c_2\cdot \log_2(\nicefrac{1}{\eps})$ layers, at most $c_3\cdot\log_2(\nicefrac{1}{\eps})$ neurons, and with one-dimensional input and output such that 
	\begin{equation}\label{eq:appsq_bound}
		\norm{R_\rho(\appsq_\eps)(x)-x^2}_{\Wkpd[1][\infty][\cube]{x}}\leq\eps
	\end{equation}
	and $\act{\appsq_\eps}(0)=0$. Furthermore, it holds that
	\begin{equation}\label{eq:bound_derivative}
		\pabs{R_\rho(\appsq_\eps)}_{\Wkp[1][\infty][\cube]}\leq c_4.
	\end{equation}
\end{proposition}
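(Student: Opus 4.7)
The plan is to follow Yarotsky's sawtooth construction from \cite[Proposition~2]{yarotsky2017error} and observe that the very same network realizes the stronger $W^{1,\infty}$ bound at no asymptotic cost. Let $g:\R\to\R$ be the hat function $g(x)=2\rho(x)-4\rho(x-\nicefrac12)+2\rho(x-1)$, so that $g(x)=\min\{2x,2-2x\}$ on $\cube$ and $g(0)=0$. Let $g_s$ denote the $s$-fold composition $g\circ\cdots\circ g$. The key classical identity is
\begin{equation*}
  x^2 \;=\; x-\sum_{s=1}^{\infty}\frac{g_s(x)}{4^s}\qquad\text{for all }x\in\cube,
\end{equation*}
and truncation at level $m$ gives the piecewise affine function
\begin{equation*}
  f_m(x)\;:=\;x-\sum_{s=1}^{m}\frac{g_s(x)}{4^s},
\end{equation*}
which, as Yarotsky observes, is exactly the continuous piecewise linear interpolant of $x\mapsto x^2$ at the dyadic nodes $k/2^m$, $k=0,1,\dots,2^m$.

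The next step is to derive the two pointwise error estimates. For the $L^\infty$ error one has the standard bound $\|f_m-x^2\|_{L^\infty(\cube)}\le 2^{-2m-2}$ arising from the maximum gap of a chord to the parabola on an interval of length $2^{-m}$. For the derivative I would argue directly: on each interval $[k/2^m,(k+1)/2^m]$ the piecewise linear interpolant $f_m$ has constant slope $(2k+1)/2^m$, while $(x^2)'=2x$ runs from $2k/2^m$ to $2(k+1)/2^m$ on the same interval, so
\begin{equation*}
  \bigl\|f_m'-2x\bigr\|_{L^\infty(\cube)} \;\le\; 2^{-m}.
\end{equation*}
Combining the two estimates yields $\|f_m-x^2\|_{W^{1,\infty}(\cube)}\le 2^{-m}$ for $m\ge 1$, so choosing $m:=\lceil\log_2(1/\eps)\rceil$ delivers the required accuracy~\eqref{eq:appsq_bound}. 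For~\eqref{eq:bound_derivative} the same chord-slope computation gives $|f_m'|\le 2-2^{-m}\le 2$ almost everywhere, hence $|R_\rho(\appsq_\eps)|_{W^{1,\infty}(\cube)}\le 2=:c_4$. Finally $f_m(0)=0$ because $g_s(0)=0$ for every $s$.

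It remains to realize $f_m$ as a ReLU network with the stated complexity. I would first build a fixed-size two-layer network $\Phi_g$ with $\act{\Phi_g}=g$ on $\R$ (three ReLU units in the hidden layer, one linear output), and then obtain $g_s$ by iterated sparse concatenation $\Phi_g\odot\cdots\odot\Phi_g$ ($s$ factors). By Remark~\ref{remark:neural_sparse_concat} every sparse concatenation multiplies the size by at most a constant, so each $g_s$ is realized by a network of depth $O(s)$ and $O(s)$ weights and neurons. The output $f_m(x)=x-\sum_{s=1}^m 4^{-s}g_s(x)$ is an affine combination of the intermediate outputs $x,g_1(x),\dots,g_m(x)$; to assemble it I would route each $g_s$ as a skip connection into the final linear layer (which is permitted by Definition~\ref{def:nn}), or equivalently parallelize via Definition/Lemma~\ref{deflemma:parallelization} and sum. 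Either way the resulting network $\appsq_\eps$ has $O(m)=O(\log_2(1/\eps))$ layers, weights, and neurons, giving constants $c_1,c_2,c_3$.

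The main technical point is really the derivative estimate: although the $L^\infty$ error decays like $4^{-m}$, the derivative error decays only like $2^{-m}$, which is what dictates the choice $m\sim\log_2(1/\eps)$. The pleasant surprise, and the reason no new construction is needed beyond Yarotsky's, is that the approximant $f_m$ is the nodal interpolant of $x^2$, so its derivative is automatically the slope of the secant on each subinterval and the error in $(f_m)'$ is bounded by the oscillation of $2x$ on a subinterval of length $2^{-m}$. Routing the summation through skip connections (instead of repeatedly re-encoding the intermediate $g_s$) is what keeps the total weight count linear in $m$.
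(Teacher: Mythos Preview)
Your proof is correct and follows essentially the same approach as the paper: both invoke Yarotsky's sawtooth construction, identify the resulting $f_m$ as the piecewise linear interpolant of $x^2$ at the dyadic nodes $k/2^m$, compute the derivative error on each subinterval as $\bigl|(2k+1)/2^m-2x\bigr|\le 2^{-m}$, bound the slope by $2$, and take $m=\lceil\log_2(1/\eps)\rceil$. The only difference is cosmetic: the paper simply cites \cite{yarotsky2017error} for the $O(m)$ complexity of the network, whereas you spell out the skip-connection assembly explicitly.
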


\begin{proof}
	In the proof of \cite[Proposition 2]{yarotsky2017error} it is shown that there exist constants $c_1$, $c_2$, $c_3>0$, such that for each $m\in\N$ there is a neural network $\Phi_m$ with at most $c_1\cdot m$ nonzero weights, at most $c_2\cdot m$ layers, at most $c_3\cdot m$ neurons the realization of which is a piecewise linear interpolation of $x\mapsto x^2$ on $\intervalo{0}{1}$. In detail, it is shown there, that the network $\Phi_m$ satisfies for all $k\in\{0,\ldots,2^m-1\}$ and all $x\in\interval{\frac{k}{2^m}}{\frac{k+1}{2^m}}$
\begin{equation}\label{eq:Phi_m}
	R_\rho(\Phi_m)(x) =\left(\frac{\left(k+1\right)^2}{2^m} - \frac{k^2}{2^m}\right)\left(x-\frac{k}{2^m}\right) + \left(\frac{k}{2^m}\right)^2.
\end{equation}
		Thus, $R_\rho(\Phi_m)$ is a piecewise linear interpolant of $f$ with $2^m+1$ uniformly distributed breakpoints $\frac{k}{2^m}, k=0,\ldots,2^m$. In particular, $\act{\Phi_m}(0)=0$. Furthermore, it is shown in the proof of~\cite[Proposition~2]{yarotsky2017error} that \begin{equation}\label{eq:square_bound}
			\norm{R_\rho(\Phi_m)(x)-x^2}_{\Lpd[\infty][\cube]}\leq 2^{-2-2m}.
	\end{equation}
	We will now show that the approximation error of the derivative can be bounded in a similar way. In particular, we show the estimate
	\begin{equation}\label{eq:square_derivative_bound}
		\pabs{R_\rho(\Phi_m)-x^2}_{\Wkpd[1][\infty][\cube]{x}}\leq 2^{-m}.
	\end{equation}
	From Equation~\eqref{eq:Phi_m} we get for all $k=0,\ldots,2^m-1$
\begin{align*}
	|R_\rho(\Phi_m)(x) - x^2|_{\Wkpd[1][\infty][\interv]{x}}&= \left\lVert\frac{\left(k+1\right)^2}{2^m} - \frac{k^2}{2^m} - 2x\right\rVert_{\Lpd[\infty][\interv]}\\
	&=\left\lVert \frac{2k+1}{2^m} - 2x\right\rVert_{\Lpd[\infty][\interv]}\\
	&=\max\left\{\left\vert\frac{2k+1}{2^m}-2\frac{k}{2^m}\right\vert, \left\vert\frac{2k+1}{2^m}-2\frac{k+1}{2^m}\right\vert\right\} =2^{-m}.
\end{align*}
	Combining Equation \eqref{eq:square_bound} and \eqref{eq:square_derivative_bound} yields
\[
	\|R_\rho(\Phi_m)(x) - x^2\|_{\Wkpd[1][\infty][\cube]{x}}\leq\max\left\{2^{-2m-2},2^{-m}\right\}=2^{-m}.
\]

Clearly, the weak derivative of $\Phi_m$ is a piecewise constant function, which assumes its maximum on the last piece. Hence,
	\begin{equation}\label{eq:bound_derivative_apmult}
		\pabs{R_\rho(\Phi_m)}_{\Wkp[1][\infty][\cube]}\leq\frac{(2^m)^2-(2^m-1)^2}{2^m}=2-\frac{1}{2^m}\leq 2.
	\end{equation}

	Let now $\eps\in\epsin$ and choose $m=\lceil\log_2(\nicefrac{1}{\eps})\rceil$. Now, $\appsq_\eps:=\Phi_m$ satisfies the approximation bound in Equation \eqref{eq:appsq_bound} and $\act{\appsq_\eps}(0)=0$. The estimate \eqref{eq:bound_derivative} holds because of Equation \eqref{eq:bound_derivative_apmult}. The number of weights can be bounded by 
	\[
		M(\appsq_\eps)\leq c_1\cdot m\leq c_1\cdot(\log_2(\nicefrac{1}{\eps})+1)\leq 2\cdot c_1\cdot \log_2(\nicefrac{1}{\eps}).
		\]
	In the same way, the number of neurons and layers can be bounded. This concludes the proof.

\end{proof}

As in \cite{yarotsky2017error}, we will now use Proposition \ref{prop:appro_square} and the polarization identity
\begin{equation}\label{eq:polarization}
	xy=\frac{1}{2}((x+y)^2-x^2-y^2)\qquad\text{for }x,y\in\R,	
\end{equation}
to define an approximate multiplication where the approximation error is again (and in contrast to \cite{yarotsky2017error}) measured in the $W^{1,\infty}$ norm.

\begin{proposition}\label{prop:approximate_multiplication}
	For any $M\geq1$, there exist constants $c,c_1>0$ and $c_2= c_2(M)>0$ such that for any $\eps \in \epsin$, there is a neural network $\apmult$ with two-dimensional input and one-dimensional output that satisfies the following properties:
	\begin{enumerate}[(i)]
		\item \label{item:network_approximation}$\norm{R_\rho(\apmult)(x,y) - xy}_{\WinfMd}\leq \epsilon$;
		\item \label{item:network_zero}if $x=0$ or $y=0$, then $R_\rho(\apmult) \left(x,y\right) = 0$;
		\item \label{item:network_derivative}$\pabs{R_\rho(\apmult_\eps)}_{\WinfM}\leq c M$;
		\item \label{item:network_complexity_apmult}the depth and the number of weights and neurons in $\apmult$ is at most $c_1 \log_2(1/\epsilon) + c_2$.
	\end{enumerate}
\end{proposition}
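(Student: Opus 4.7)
The plan is to imitate Yarotsky's use of the polarization identity $xy = \tfrac{1}{2}((x+y)^2 - x^2 - y^2)$, with two modifications: first, the three squares must be taken of arguments in $[-2M, 2M]$ rather than $[0,1]$, and second, the error has to be controlled in the $W^{1,\infty}$ norm rather than in $L^\infty$. The necessary building block is Proposition~\ref{prop:appro_square}, which provides, for any $\delta\in\epsin$, a network $\appsqdelta$ with $R_\rho(\appsqdelta)(0)=0$, $\|R_\rho(\appsqdelta)(u) - u^2\|_{W^{1,\infty}((0,1))}\leq \delta$, and $|R_\rho(\appsqdelta)|_{W^{1,\infty}((0,1))}\leq c_4$, of logarithmic size in $1/\delta$.

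I would first build a subnetwork $\Psi_\delta$ approximating $z\mapsto z^2$ on $(-2M, 2M)$. Since $|z| = \rho(z) + \rho(-z)$, the map $z\mapsto |z|/(2M)$ lies in $[0,1]$ and can be realized by a two-layer ReLU network whose affine part contains the division. Setting
\[
G_\delta(z) := (2M)^2\, R_\rho(\appsqdelta)\bigl(|z|/(2M)\bigr),
\]
a short chain-rule computation (or Corollary~\ref{cor:composition_norm}) shows
\[
\|G_\delta - z^2\|_{W^{1,\infty}((-2M,2M))} \leq (2M)^2 \delta,
\]
because composing with the $1$-Lipschitz map $z\mapsto |z|/(2M)$ and multiplying by $(2M)^2$ scales the $L^\infty$ error by $(2M)^2$ and the derivative error by $2M$. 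Crucially $G_\delta(0) = 0$ since $R_\rho(\appsqdelta)(0) = 0$. The full network $\apmult$ is then obtained by parallelizing three copies of the corresponding subnetwork that take as inputs $x+y$, $x$, and $y$ (the inputs $x+y$, $x$, $y$ are all produced by an affine map on the input), and combining the three outputs via a final affine layer with weights $\tfrac{1}{2}, -\tfrac{1}{2}, -\tfrac{1}{2}$.

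To verify (\ref{item:network_approximation}), observe that $\apmult$ realizes $H(x,y) = \tfrac{1}{2}(G_\delta(x+y) - G_\delta(x) - G_\delta(y))$, and the polarization identity yields
\[
\|H - xy\|_{W^{1,\infty}((-M,M)^2)} \leq C M^2 \delta
\]
for an absolute $C>0$, using the above bound on each of the three terms together with the observation that the partial derivatives satisfy, e.g., $\partial_x H - y = \tfrac{1}{2}((G_\delta'(x+y) - 2(x+y)) - (G_\delta'(x) - 2x))$. Choosing $\delta = \eps/(C M^2)$ gives the desired bound. Property (\ref{item:network_zero}) follows because $x=0$ forces $|x+y|=|y|$ and $|x|=0$, so $G_\delta(x+y) = G_\delta(y)$ and $G_\delta(x) = 0$, whence $H(0,y) = 0$ (and symmetrically). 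Property (\ref{item:network_derivative}) follows from the triangle inequality $|H|_{W^{1,\infty}} \leq |xy|_{W^{1,\infty}} + |H - xy|_{W^{1,\infty}} \leq M + \eps \leq 2M$, using $\eps \leq \tfrac{1}{2}$ and $M\geq 1$.

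For (\ref{item:network_complexity_apmult}), the three parallel subnetworks each consist of a fixed number of initial and scaling layers on top of $\appsqdelta$, so by Proposition~\ref{prop:appro_square} and Definition/Lemma~\ref{deflemma:parallelization} their total depth, neuron count and weight count are each bounded by $c_1 \log_2(1/\delta) + c_2'$ for absolute $c_1$ and $c_2' = c_2'(M)$; substituting $\log_2(1/\delta) = \log_2(1/\eps) + \log_2(CM^2)$ absorbs the $M$-dependence into an additive constant $c_2(M)$. The only mildly subtle point in the whole argument is the $W^{1,\infty}$ error tracking through the composition with $|z|/(2M)$, but since this outer map is piecewise affine with Lipschitz constant $1/(2M)$ and the problematic kink at $z=0$ is a measure-zero set, it gives a clean constant-factor blowup at the level of weak derivatives rather than any genuine obstruction.
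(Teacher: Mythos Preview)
Your proposal is correct and follows essentially the same route as the paper: both build $\apmult$ from three scaled copies of $\appsqdelta$ applied to $|x+y|/(2M)$, $|x|/(2M)$, $|y|/(2M)$, combine them via the polarization identity, and choose $\delta$ proportional to $\eps/M^2$. The only noticeable difference is in (\ref{item:network_derivative}): the paper bounds $\pabs{\act{\apmult}}_{W^{1,\infty}}$ directly by chaining Corollary~\ref{cor:composition_norm} with the a priori bound $\pabs{R_\rho(\appsqdelta)}_{W^{1,\infty}}\leq c_4$ from Proposition~\ref{prop:appro_square}, whereas you obtain it more cheaply from the triangle inequality $\pabs{H}_{W^{1,\infty}}\leq \pabs{xy}_{W^{1,\infty}}+\pabs{H-xy}_{W^{1,\infty}}\leq M+\eps$, which is a nice shortcut.
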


\begin{proof}
	Set $\delta:=\eps/(6M^2C)$, where $C$ is the constant from Corollary \ref{cor:composition_norm} for $n=2$ and $m=1$, and let $\appsqdelta$ be the approximate squaring network from Proposition \ref{prop:appro_square} such that  
	\begin{equation}\label{eq:delta_bound}
		\norm{\act{\appsqdelta} - x^2}_{\Wkpd[1][\infty][\cube]{x}}<\delta.
	\end{equation}
	As in the proof of \cite[Proposition~3]{yarotsky2017error}, we use the fact that $\pabs{x}=\rho(x)+\rho(-x)$ to see that a network $\apmult$ can be constructed with two-dimensional input and one-dimensional output that satisfies
\[
	\act{\apmult} (x,y)= 2M^2\left(\act{\appsqdelta}\left(\frac{\pabs{x+y}}{2M}\right)-\act{\appsqdelta}\left(\frac{\pabs{x}}{2M}\right)-\act{\appsqdelta}\left(\frac{\pabs{y}}{2M}\right)\right).
\]
	As a consequence of Proposition \ref{prop:appro_square}, there exists a constant $c_0>0$ such that $\apmult$ has at most $c_0 \log_2(\nicefrac{1}{\eps})+c_0 \log_2(6M^2)+3\leq c_0 \log_2(\nicefrac{1}{\eps}) + c_1$ layers, $3 c_0 \log_2(\nicefrac{1}{\eps})+3c_0 \log_2(6M^2)+9\leq c'\log_2(\nicefrac{1}{\eps})+c_2$ neurons and $3 c_0 \log_2(\nicefrac{1}{\eps})+3c_0 \log_2(6M^2)+17\leq c''\log_2(\nicefrac{1}{\eps})+c_3$ nonzero weights. Here $c',c''>0$ and $c_1=c_1(M),c_2=c_2(M),c_3=c_3(M)>0$ are suitable constants and (\ref{item:network_complexity_apmult}) is hence satisfied.

	Since $\act{\appsqdelta}(0) = 0$, (\ref{item:network_zero}) easily follows.

	Using the polarization identity \eqref{eq:polarization}, we can write
\begin{align*}
xy &= 4M^2\frac{x}{2M}\frac{y}{2M}\\
&= 4M^2\frac{1}{2}\left(\left(\frac{x}{2M}+\frac{y}{2M}\right)^2 - \left(\frac{x}{2M}\right)^2-\left(\frac{y}{2M}\right)^2\right)\\
&= 2M^2\left(\left(\frac{\pabs{x+y}}{2M}\right)^2 - \left(\frac{\pabs{x}}{2M}\right)^2-\left(\frac{\pabs{y}}{2M}\right)^2\right).\numberthis \label{eq:polarization_xy}
\end{align*}
	To keep the following calculations simple, we introduce some notation. In particular, we define
	\[
		u_{xy}:\intervalo{-M}{M}^2\to\cube,\quad (x,y)\mapsto \frac{\pabs{x+y}}{2M},
		\]
	\[
		u_x:\intervalo{-M}{M}^2\to\cube,\quad (x,y)\mapsto \frac{\pabs{x}}{2M},
		\]
		and
	\[
		u_y:\intervalo{-M}{M}^2\to\cube,\quad (x,y)\mapsto \frac{\pabs{y}}{2M}.
		\]
		Setting $f:\cube\to\R$, $x\mapsto x^2$ and using \eqref{eq:polarization_xy} we get
		\begin{equation}
			xy =2M^2\Big(f\circ u_{xy}(x,y) - f\circ u_x(x,y) -f\circ u_y(x,y)\Big).
		\end{equation}
		Now, we can estimate
\begin{align*}
	&\norm{\act{\apmult}(x,y) - xy}_{\WinfMd}\\
	&= \bigg\Vert 2M^2\big(\act{\appsqdelta}\circ u_{xy}-\act{\appsqdelta}\circ u_x-\act{\appsqdelta}\circ u_y\big)\\
	&\alplus-2M^2\big(f\circ u_{xy} - f\circ u_x-f\circ u_y\big)\bigg\Vert_{\WinfM}\\
			&\leq2M^2\Big(\norm{\left(\act{\appsqdelta}-f\right)\circ u_{xy}}_{\WinfM}+\norm{(\act{\appsqdelta}-f)\circ u_x}_{\WinfM}\\
		&\alplus+\norm{(\act{\appsqdelta}-f)\circ u_y}_{\WinfM}\Big).\numberthis\label{eq:chainrule}
\end{align*}
	Note that for the inner functions in the compositions in Equation \eqref{eq:chainrule}, it holds that
	\begin{equation}\label{eq:uxy_derivative_bound}
		\pabs{u_{xy}}_{\WinfM}=\pabs{u_x}_{\WinfM}=\pabs{u_y}_{\WinfM}=\nicefrac{1}{(2M)}.
	\end{equation}
		Hence, to finally prove (\ref{item:network_approximation}), we apply Corollary \ref{cor:composition_norm} to \eqref{eq:chainrule} and get 
	\begin{align*}
	&\norm{\act{\apmult}(x,y) - xy}_{\WinfMd}\\
		&\leq2M^2 3C\max\left\{\norm{\act{\appsqdelta}-f}_{\Lp[\infty][\cube]},\pabs{\act{\appsqdelta}-f}_{\Wkp[1][\infty][\cube]}\cdot \nicefrac{1}{(2M)}\right\}\\
	&\leq 6M^2C \delta=\eps,
\end{align*}
where we used \eqref{eq:delta_bound} in the second step.

	To show (\ref{item:network_derivative}) we use the chain rule estimate from Corollary \ref{cor:composition_norm} and get
	\begin{align*}
		&\pabs{\act{\apmult}}_{\WinfM}\\
		&\alplus\leq 2 M^2 C\sum_{z\in\{x,y,xy\}}\pabs{\act{\appsqdelta}}_{\Wkp[1][\infty][\cube]}\pabs{u_{z}}_{\WinfM}\\
		&\alplus\leq 2 M^2 C3 C'\frac{1}{2M}\alplus=3CC'M,
	\end{align*}
	where we used Equation~\eqref{eq:uxy_derivative_bound} in the third step and $C'$ is the absolute constant from Equation~\eqref{eq:bound_derivative} in~Proposition~\ref{prop:appro_square}.
\end{proof}

Next, we construct a partition of unity (in the same way as in \cite[Theorem 1]{yarotsky2017error}) that can be defined as a product of piecewise linear functions, such that each factor of the product can be realized by a neural network. We will use this product structure in Lemma~\ref{lemma:network_polynomial_approximation} together with a generalized version of the approximate multiplication from Proposition~\ref{prop:approximate_multiplication} to approximate localized polynomials with ReLU networks.

\begin{lemma}\label{lemma:partition_of_unity}
	For any $d,N\in \N$ there exists a collection of functions 
	\[
		\Psi=\left\{\phi_m:m\in\MNd\right\}
	\]
	with $\phi_m:\R^d\to\R$ for all $m\in\MNd$ with the following properties:
	\begin{enumerate}[(i)]
		\item \label{item:pou_01} $0\leq \phi_m(x)\leq 1$ for every $\phi_m \in\Psi$ and every $x\in \R^d$; 
		\item  \label{item:pou_sum} $\sum_{\phi_m\in\Psi}\phi_m(x)=1$ for every $x\in[0,1]^d$;
		\item \label{item:pou_supp} $\supp \phi_m\subset B_{\nicefrac{1}{N},\norm{\cdot}_{\ell^\infty}}(\nicefrac{m}{N})$ for every $\phi_m \in \Psi$;
		\item \label{item:pou_derivative}  there exists a constant $c\geq 1$ such that $\norm{\phi_m}_{\Wkp[k][\infty][\R^d]}\leq (c\cdot N)^k$ for $k\in\{0,1\}$;
		\item \label{item:pou_network} there exist absolute constants $C,c\geq 1$ such that for each $\phi_m\in\Psi$ there is a neural network $\Phi_m$ with $d$-dimensional input and $d$-dimensional output, with at most three layers, $C d$ nonzero weights and neurons, that satisfies
			\[
	\prod_{l=1}^d [\act{\Phi_m}]_l=\phi_m \quad
				\]
			and $\norm{[\act{\Phi_m}]_l}_{\Wkp[k][\infty][\cube^d]}\leq (cN)^k$ for all $l=1,\ldots,d$ and $k\in\{0,1\}$.
	\end{enumerate}
\end{lemma}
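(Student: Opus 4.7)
My plan is a tensor-product partition of unity built from a single univariate hat function. Using the identity $\rho(y)+\rho(-y)=|y|$, define
\[
\psi(y):=\rho\bigl(1-\rho(y)-\rho(-y)\bigr)=\max\{0,\,1-|y|\},
\]
and for each multi-index $m=(m_1,\ldots,m_d)\in\{0,\ldots,N\}^d$ set
\[
\phi_{m_l}^{(1)}(x_l):=\psi(Nx_l-m_l),\qquad \phi_m(x):=\prod_{l=1}^d\phi_{m_l}^{(1)}(x_l).
\]
This will be the collection $\Psi$. The univariate factor $\phi_{m_l}^{(1)}$ is supported where $|x_l-m_l/N|<1/N$, is bounded by $1$, and is $N$-Lipschitz.

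\textbf{Properties (i)--(iv).} Since each factor lies in $[0,1]$, so does the product, giving (i). For (ii) I use the familiar identity $\sum_{k\in\Z}\psi(y-k)=1$ valid for every $y\in\R$ (at each point at most two consecutive integers contribute, and their hat values sum to $1$); restricted to $y=Nx_l\in[0,N]$ this yields $\sum_{m_l=0}^N\phi_{m_l}^{(1)}(x_l)=1$ for $x_l\in\cube$, and tensoring over $l=1,\ldots,d$ gives the full partition of unity on $\cube^d$. Property (iii) follows because a point where $\phi_m\neq 0$ must satisfy $|Nx_l-m_l|<1$ for every $l$. For (iv), $\psi$ is $1$-Lipschitz so $\phi_{m_l}^{(1)}$ is $N$-Lipschitz; the product rule together with $\|\phi_{m_j}^{(1)}\|_{L^\infty}\leq 1$ yields $\|D^\alpha \phi_m\|_{L^\infty}\leq N^{|\alpha|}$ for $|\alpha|\leq 1$, so $\|\phi_m\|_{W^{k,\infty}}\leq (cN)^k$ with a universal constant $c\geq 1$ (Theorem~\ref{theorem:Lipschitz} identifies Lipschitz bounds with the $W^{1,\infty}$ semi-norm up to dimensional constants).

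\textbf{Network realization.} The explicit formula $\phi_{m_l}^{(1)}(x_l)=\rho\bigl(1-\rho(Nx_l-m_l)-\rho(m_l-Nx_l)\bigr)$ is realized by a $3$-layer ReLU network $\Phi_{m,l}$ with two hidden neurons in the first layer, one in the second, and a linear output, using $O(1)$ weights and neurons. I let $\Phi_{m,l}$ have $d$-dimensional input by selecting only the coordinate $x_l$ in its first affine map. Parallelizing the $d$ subnetworks $\Phi_{m,1},\ldots,\Phi_{m,d}$ via Definition/Lemma~\ref{deflemma:parallelization} produces a network $\Phi_m$ with $d$-dimensional input, $d$-dimensional output, $3$ layers, and $O(d)$ nonzero weights and neurons. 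By construction $[\act{\Phi_m}]_l=\phi_{m_l}^{(1)}(x_l)$, so $\prod_{l=1}^d[\act{\Phi_m}]_l=\phi_m$, while each component depends on a single coordinate and inherits the univariate estimate $\|\phi_{m_l}^{(1)}\|_{W^{k,\infty}(\cube)}\leq (cN)^k$ when viewed as a function on $\cube^d$. The construction presents no real obstacle; the only care needed is to keep the realization at $3$ layers (rather than accidentally inflating the depth when composing with an identity network for parallelization) and to confirm that the parallelization inherits the correct polynomial weight count in $d$.
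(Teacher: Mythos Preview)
Your approach is the same tensor-product strategy as the paper's: build $\phi_m$ as a product of $d$ univariate bumps and realize each factor by a small ReLU network, then parallelize. The paper, however, uses a \emph{trapezoidal} bump
\[
\psi(t)=\begin{cases}1,&|t|<1,\\2-|t|,&1\le|t|\le 2,\\0,&|t|>2,\end{cases}
\]
and sets $\phi_m(x)=\prod_l\psi\bigl(3N(x_l-m_l/N)\bigr)$, rather than the standard hat $\psi(t)=\max\{0,1-|t|\}$ scaled by $N$.

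This difference matters for property~(iii). In the paper's notation $B_{1/N,\norm{\cdot}_{\ell^\infty}}(m/N)$ is the \emph{open} ball, and (iii) asks for $\supp\phi_m$ (a closed set) to sit inside it. With your hat function one has $\{\phi_m\neq 0\}=\{x:\|x-m/N\|_{\ell^\infty}<1/N\}$, hence $\supp\phi_m$ is the \emph{closed} ball of radius $1/N$, which is not contained in the open ball; so (iii) as stated fails. The trapezoidal $\psi$ with the factor $3N$ gives support radius $2/(3N)<1/N$, and the plateau on $|t|<1$ is exactly what makes $\sum_{m_l}\psi(3N(x_l-m_l/N))\equiv 1$ on $[0,1]$ still hold despite the shrunken support. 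With a pure hat there is no room to shrink the support without breaking the partition in~(ii). Either switch to the trapezoidal bump (the network realization is still two affine layers around one ReLU layer, as in the paper), or argue separately that the closed-ball version of~(iii) suffices for the downstream lemmas; as written, the construction does not prove the stated lemma.

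Your network realization and layer count are otherwise fine: each factor is a $3$-layer, $O(1)$-weight ReLU network in $d$ inputs, and parallelization via Definition/Lemma~\ref{deflemma:parallelization} keeps $L=\max_l L_l=3$ with $O(d)$ weights and neurons, matching the paper's bound.
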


\begin{proof}
	As in \cite{yarotsky2017error}, we define the functions
	\[
		\psi:\R\to\R,\qquad\psi(x):=\begin{cases}1, & \pabs{x}<1,\\0, &2<\pabs{x},\\2-\pabs{x}, &1\leq\pabs{x}\leq 2,\end{cases}
		\]
		and $\phi_m:\R^d\to\R$ as a product of scaled and shifted versions of $\psi$. Concretely, we set
	\begin{equation}\label{eq:partition_unity}
		\phi_m(x):=\prod_{l=1}^d \psi\left(3N\left(x_l-\frac{m_l}{N}\right)\right),
	\end{equation}
	for $m=(m_1,\ldots,m_d)\in\MNd$. Then, (\ref{item:pou_01}),(\ref{item:pou_sum}) and (\ref{item:pou_supp}) follow easily from the definition.
	
	To show (\ref{item:pou_derivative}), note that $\norm{\phi_m}_{L^\infty}\leq 1$ follows already from (\ref{item:pou_01}). It now suffices to show the claim for the $W^{1,\infty}$ semi-norm. For this, let $l\in\{1,\ldots,d\}$ and $x\in\R^d$, then
	\begin{align*}
		\pabs*{\ddx{x_l}\phi_m(x)}&=\pabs*{\prod_{i=1,i\neq l}^d\psi\left(3N\left(x_i-\frac{m_l}{N}\right)\right)}\pabs*{\psi'\left(3N\left(x_l-\frac{m_l}{N}\right)\right)3N}\\
		&\leq 3N.
	\end{align*}
	It follows that $\pabs{\phi_m}_{W^{1,\infty}}\leq 3N$.

	To show (\ref{item:pou_network}), we start by constructing a network $\Phi_{\psi}$ that realizes the function $\psi$. For this we set
	\[
		A_1:=\bmat{c}{
			 1\\[1em]
			 1\\[1em]
			 1\\[1em]
			 1
		}, \quad
		b_1:=\bmat{c}{
			 2\\[1em]
			 1\\[1em]
			 -1\\[1em]
			 -2
		} \quad\text{and}\quad 
		A_2:=\bmat{c|c c c c}{
			 0&1&-1&-1&1
		},\quad
		b_2:=0,
		\]
		and $\Phi_{\psi}:=((A_1,b_1),(A_2,b_2))$. Then $\Phi_{\psi}$ is a two-layer network with one-dimensional input and one-dimensional output, with $12$ nonzero weights and $6$ neurons such that
		\[
			\act{\Phi_{\psi}}(x)=\psi(x)\quad\text{for all}\quad x\in\R.
			\]
			We denote by $\Phi_{m,l}$ the one-layer network with $d$-dimensional input and one-dimensional output, with $2$ nonzero weights and $d+1$ neurons such that $\act{\Phi_{m,l}}=3N(x_l-m_l/N)$ for all $x\in\R^d$ and $m\in\MNd$, $l=1,\ldots,d$. Finally, we define
			\[
			\Phi_m:=P(\Phi_{\psi}\odot\Phi_{m,l}:l=1,\ldots,d),
				\]
				which is a three-layer network with $d$-dimensional input and $d$-dimensional output, with at most $d\cdot2\cdot(12+2)=d\cdot 28$ nonzero weights and at most $9d$ neurons, and
				\[
				[\act{\Phi_m}]_l(x)=\act{\Phi_{\psi}\odot\Phi_{m,l}}(x)=\psi\left(3N\left(x_l-\frac{m_l}{N}\right)\right)
				\]
				for $l=1,\ldots,d$ and $x\in\R^d$. Clearly, it follows that $\prod_{l=1}^d [\act{\Phi_m}]_l(x)=\phi_m(x)$ for all $x\in\R^d$. The last part of (\ref{item:pou_network}) can be shown similarly as (\ref{item:pou_derivative}).
\end{proof}

The following lemma uses the partition of unity from Lemma~\ref{lemma:partition_of_unity} and the Bramble-Hilbert Lemma~\ref{lemma:bramble_hilbert} in a classical way to derive an approximation with localized polynomials in the $L^p$ norm and the $W^{1,p}$ norm. Using an interpolation argument, we can generalize this result to the case where the approximation is performed with respect to the $W^{s,\infty}$ norm, where $0\leq s\leq 1$.

\begin{lemma}\label{lemma:polynomial_approximation}
	Let $d,N\in \N$, $n\in\N_{\geq 2}$, $1\leq p\leq \infty$ and $\Psi=\Psi(d,N)=\left\{\phi_m:m\in\MNd\right\}$ be the partition of unity from Lemma \ref{lemma:partition_of_unity}. Then there is a constant $C=C(d,n,p)>0$ such that for any $f\in\Wkp[n][p][\cube^d]$, there exist polynomials $p_{f,m}(x)=\sum_{|\alpha|\leq n-1}c_{f,m,\alpha}x^\alpha$ for $m\in\MNd$ with the following properties:
	
Let $0\leq\fraccoef\leq 1$ and set $f_N:=\sum_{m\in\MNd}\phi_m p_{f,m}$, then the operator $T_\fraccoef:\Wkp[n][p][\cube^d]\to\Wkp[s][p][\cube^d]$ with $T_\fraccoef f=f-f_N$ is linear and bounded with
		\[
			\norm{T_\fraccoef f}_{\Wkp[s][p][\cube^d]}\leq C\left(\frac{1}{N}\right)^{n-\fraccoef}\norm{f}_{\Wkp[n][p][\cube^d]}.
			\]
	Furthermore, there is a constant $c=c(d,n)>0$ such that for any $f\in\Wkp[n][p][\cube^d]$ the coefficients of the polynomials $p_{f,m}$ satisfy 
	\[
		\pabs{c_{f,m,\alpha}}\leq c\norm{\tilde f}_{\Wkp[n][p][\Omega_{m,N}]}N^{d/p}
		\]
		for all $\alpha\in\N^d_0$ with $\pabs{\alpha}\leq n-1$ and $m\in\{0,\ldots,N\}^d$, where $\Omega_{m,N}:=B_{\frac{1}{N},\norm{\cdot}_{\ell^\infty}}\left(\frac{m}{N}\right)$ and $\tilde f\in\Wkp[n][p][\R^d]$ is an extension of $f$.
\end{lemma}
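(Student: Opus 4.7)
The plan is to take $p_{f,m}$ to be the averaged Taylor polynomial (Definition \ref{def:taylor}) of a Sobolev extension $\tilde f$ of $f$, averaged over a Euclidean ball inscribed in a cube centered at $m/N$; to establish endpoint estimates in $\Lp[p][\cube^d]$ and $\Wkp[1][p][\cube^d]$ via Bramble--Hilbert combined with a partition-of-unity gluing argument; and to interpolate in the target space to cover all $0\leq s\leq 1$. Since $\cube^d$ is a Lipschitz domain, there is a bounded linear extension operator that produces $\tilde f\in\Wkp[n][p][\R^d]$ with $\norm{\tilde f}_{\Wkp[n][p][\R^d]}\leq C\norm{f}_{\Wkp[n][p][\cube^d]}$. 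For each $m\in\MNd$, let $B_m:=B_{1/N,\pabs{\cdot}}(m/N)$ be the Euclidean ball of radius $1/N$ inscribed in $\Omega_{m,N}$. Then $\Omega_{m,N}$ is open, bounded, convex, star-shaped with respect to $B_m$, has diameter $2\sqrt d /N$, and chunkiness parameter at most $2\sqrt d$ independently of $m,N$. Define $p_{f,m}$ as the averaged Taylor polynomial $Q^n\tilde f$ of order $n$ over $B_m$. Since $\Omega_{m,N}\subset B_{2,\norm{\cdot}_{\ell^\infty}}(0)$, Lemma \ref{prop:taylor_is_polynom} with $R=2$ produces the expansion $p_{f,m}(x)=\sum_{\pabs{\alpha}\leq n-1}c_{f,m,\alpha}x^\alpha$ whose coefficients satisfy
\[
\pabs{c_{f,m,\alpha}}\leq c\,N^{d/p}\norm{\tilde f}_{\Wkp[n-1][p][B_m]}\leq c\,N^{d/p}\norm{\tilde f}_{\Wkp[n][p][\Omega_{m,N}]},
\]
which gives the coefficient bound claimed in the lemma.

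\textbf{Endpoint bounds.} The Bramble--Hilbert Lemma \ref{lemma:bramble_hilbert} applied to $\tilde f$ on $\Omega_{m,N}$ yields, for $k=0,1$,
\[
\pabs{\tilde f - p_{f,m}}_{\Wkp[k][p][\Omega_{m,N}]}\leq C\,N^{-(n-k)}\pabs{\tilde f}_{\Wkp[n][p][\Omega_{m,N}]}
\]
with $C$ independent of $m,N$. On $\cube^d$ we have $f=\tilde f$ and $\sum_m\phi_m\equiv 1$, so $T_sf = f-f_N = \sum_m \phi_m(\tilde f - p_{f,m})$. Combining the bounds $\norm{\phi_m}_{\Linf[\R^d]}\leq 1$ and $\pabs{\phi_m}_{W^{1,\infty}(\R^d)}\leq cN$ from Lemma \ref{lemma:partition_of_unity} with the product estimate Lemma \ref{lemma:product_rule_bound_p}, and exploiting the bounded-overlap property that at most $2^d$ of the cubes $\Omega_{m,N}$ meet any given point, summation of the local bounds gives
\[
\norm{T_0 f}_{\Lp[p][\cube^d]}\leq C\,N^{-n}\norm{f}_{\Wkp[n][p][\cube^d]}\quad\text{and}\quad\norm{T_1 f}_{\Wkp[1][p][\cube^d]}\leq C\,N^{-(n-1)}\norm{f}_{\Wkp[n][p][\cube^d]}.
\]
Linearity of $T_0$ and $T_1$ follows from linearity of the extension operator and of the averaged Taylor polynomial (Remark \ref{remark:sob_taylor_lin}).

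\textbf{Interpolation and main obstacle.} Applying Theorem \ref{thm:interpolation_operator} to the operator $T\colon f\mapsto f-f_N$ viewed simultaneously as an element of $\Lcal(\Wkp[n][p][\cube^d],\Lp[p][\cube^d])$ and $\Lcal(\Wkp[n][p][\cube^d],\Wkp[1][p][\cube^d])$, with interpolation couples $(\Wkp[n][p][\cube^d],\Wkp[n][p][\cube^d])$ (whose interpolation space equals $\Wkp[n][p][\cube^d]$ by Lemma \ref{lemma:interpolation_relations}(ii)) and $(\Lp[p][\cube^d],\Wkp[1][p][\cube^d])$ (whose interpolation space equals $\Wkp[s][p][\cube^d]$ by Theorem \ref{thm:interpol_eqiv_hoelder}), we obtain
\[
\norm{T_s}_{\Lcal(\Wkp[n][p][\cube^d],\,\Wkp[s][p][\cube^d])}\leq C\,\norm{T_0}^{1-s}\norm{T_1}^{s}\leq C\,N^{-n(1-s)}N^{-(n-1)s}=C\,N^{-(n-s)},
\]
which is the desired estimate; the boundary cases $s=0,1$ coincide with the endpoint bounds. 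The most delicate step is the partition-of-unity gluing: the $W^{1,p}$ error must absorb a factor of $\pabs{\phi_m}_{W^{1,\infty}}\lesssim N$ from differentiating the partition, and this is precisely what the extra $N^{-1}$ gap between the Bramble--Hilbert $L^p$ rate $N^{-n}$ and $W^{1,p}$ rate $N^{-(n-1)}$ provides; moreover the summation over $\sim N^d$ cells must be controlled via bounded overlap rather than a naive triangle inequality. The extension and the final Banach-space interpolation are comparatively routine.
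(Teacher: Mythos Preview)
Your proof is correct and follows essentially the same route as the paper: extend $f$ to $\tilde f$, take $p_{f,m}$ to be the averaged Taylor polynomial on a ball inside $\Omega_{m,N}$, derive local Bramble--Hilbert estimates, glue via the partition of unity using the product rule and bounded overlap to get the $L^p$ and $W^{1,p}$ endpoint bounds, and then interpolate with the trivial couple $(W^{n,p},W^{n,p})$ on the source side.

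One minor technical correction: with your choice $B_m=B_{1/N,\pabs{\cdot}}(m/N)$ the closure $\overline{B_m}$ touches $\partial\Omega_{m,N}$ at the face centers, so $B_m$ is not compactly contained in $\Omega_{m,N}$ (as Definition~\ref{def:taylor} requires) and $\Omega_{m,N}$ is not star-shaped with respect to $B_m$ in the sense used here (the \emph{closed} convex hull $\overline{\co}(\{x\}\cup B_m)$ must lie in the open set $\Omega_{m,N}$). The paper takes radius $3/(4N)$ instead, which still satisfies $r>\tfrac12\, r^\star_{\max}(\Omega_{m,N})=\tfrac{1}{2N}$ and repairs both points without altering any of your estimates.
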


\begin{proof}
	The idea of the proof is similar to the first part of the proof of \cite[Theorem 1]{yarotsky2017error}. We use approximation properties of averaged Taylor polynomials (see Bramble-Hilbert Lemma~\ref{lemma:bramble_hilbert}) to derive local estimates and then combine them using a partition of unity to obtain a global estimate. In order to use this strategy also near the boundary, we make use of an extension operator.

For this, let $E:\Wkp[n][p][\cube^d]\to\Wkp[n][p][\R^d]$ be the extension operator from \cite[Theorem VI.3.1.5]{stein2016singular} and set $\tilde f:=Ef$. Note that for arbitrary $\Omega\subset\R^d$ and $1\leq k\leq n$ it holds
	\begin{equation}\label{eq:extension_bound}
		\pabs[\big]{\tilde f}_{\Wkp[k][p][\Omega]} \leq \norm[\big]{\tilde f}_{\Wkp[n][p][\R^d]} \leq C_E \norm{f}_{\Wkp[n][p][\cube^d]},
	\end{equation}
	where $C_E=C_E(n,p,d)$ is the norm of the extension operator. 

	\textbf{Step 1 (Averaged Taylor polynomials)}: For each $m\in \MNd$ we set 
	\[
		\Omega_{m,N}:=B_{\frac{1}{N},\norm{\cdot}_{\ell^\infty}}\Big(\frac{m}{N}\Big) \quad \text{and}\quad B_{m,N} := B_{\frac{3}{4N},\pabs{\cdot}}\Big(\frac{m}{N}\Big),
	\]and denote by $p_m=p_{f,m}$ the Taylor polynomial of order $n$ of $\tilde f$ averaged over $B_{m,N}$ (cf.\ Definition~\ref{def:taylor}). It follows from Proposition~\ref{prop:taylor_is_polynom} (for $\Omega=\Omega_{m,N}$, $B=B_{m,N}$ and $R=2$) that we can write $p_m=\sum_{|\alpha|\leq n-1}c_{m,\alpha}x^\alpha$ and that there is a constant $c'=c'(n,d)>0$ such that
	\[
		\pabs{c_{m,\alpha}}\leq c'\norm[\big]{\tilde f}_{\Wkp[n][p][\Omega_{m,N}]}\left(\frac{3}{4N}\right)^{-d/p} \leq c''\norm{\tilde f}_{\Wkp[n][p][\Omega_{m,N}]}N^{d/p}
	\]
		for $m\in\MNd$, where $c''=c''(n,d,p)>0$ is a suitable constant. It now suffices to show (i) and~(ii).
	
	\textbf{Step 2 (Local estimates in $\norm{\cdot}_{W^{k,p}}, k\in\{0,1\}$)}: To check that the conditions of the Bramble-Hilbert Lemma \ref{lemma:bramble_hilbert} are fulfilled, note that $B_{m,N}\comp\Omega_{m,N}$. Furthermore, $B_{m,N}$ is a ball in $\Omega_{m,N}$ such that $\Omega_{m,N}$ is star-shaped with respect to $B_{m,N}$. We have $\diam_{\pabs{\cdot}}(\Omega_{m,N})=(2\sqrt d)/N$ and $\starmax(\Omega_{m,N})=1/N$ and, thus,
	\[
		r_{\pabs{\cdot}}\left(B_{m,N}\right)=\frac{3}{4N}>\frac{1}{2}\cdot\frac{1}{N}=\frac{1}{2}\cdot\starmax(\Omega_{m,N}).
	\]Finally, we have for the chunkiness parameter of $\Omega_{m,N}$ 
	\begin{equation}\label{eq:chunkiness}
		\gamma(\Omega_{m,N})=\diam(\Omega_{m,N})\cdot\frac{1}{\starmax(\Omega_{m,N})}=\frac{2\sqrt d}{N}\cdot N=2\sqrt d.
	\end{equation}
	Applying the Bramble-Hilbert Lemma \ref{lemma:bramble_hilbert} yields for each $m\in\MNd$ the local estimate
	\begin{align*}
		\norm[\big]{\tilde f-p_m}_{\Lp[p][\Omega_{ m,N}]}&\leq C_1\left(\frac{2\sqrt d}{N}\right)^n \pabs[\big]{\tilde f}_{\Wkp[n][p][\Omega_{m,N}]}\leq C_2 \left(\frac{1}{N}\right)^n\norm{\tilde f}_{\Wkp[n][p][\Omega_{m,N}]}.\numberthis\label{eq:local_Linf_bound}
	\end{align*}
	Here, $C_1=C_1(n,d)>0$ is the constant from Lemma \ref{lemma:bramble_hilbert} which only depends on $n$ and $d$, since the chunkiness parameter of $\Omega_{m,N}$ is a constant depending only on $d$ (see \eqref{eq:chunkiness}) and~$C_2 = C_2(n,d)>0$.
	In the same way, we get
	\begin{equation}\label{eq:local_Winf_bound}
		\pabs[\big]{\tilde f-p_m}_{\Wkp[1][p][\Omega_{ m,N}]}\leq C_3\left(\frac{1}{N}\right)^{n-1}\norm{\tilde f}_{\Wkp[n][p][\Omega_{m,N}]},
	\end{equation}
	where $C_3=C_3(n,d)>0$ is a suitable constant. 

	The first step towards a global estimate is now to combine Equation \eqref{eq:local_Linf_bound} and \eqref{eq:local_Winf_bound} with the cut-off functions from the partition of unity. We have
	\begin{align*}
		\norm[\big]{ \phi_m(\tilde f-p_m)}_{\Lp[p][\Omega_{m,N}]}&\leq \norm{\phi_m}_{\Lp[\infty][\Omega_{m,N}]}\cdot\norm[\big]{ \tilde f-p_m}_{\Lp[p][\Omega_{m,N}]}\\
		\expl{Lemma \ref{lemma:partition_of_unity} (\ref{item:pou_derivative}), Equation \eqref{eq:local_Linf_bound}}&\leq C_2\left(\frac{1}{N}\right)^n\norm{\tilde f}_{\Wkp[n][p][\Omega_{m,N}]}\numberthis\label{eq:part_unity_Linf_bound}.
	\end{align*}
	Furthermore, using the product inequality for weak derivatives from Lemma~\ref{lemma:product_rule_bound_p} we get that there is a constant $C'=C'(d,p)>0$ such that
	\begin{align}
		\pabs[\big]{\phi_m(\tilde f-p_m)}_{\Wkp[1][p][\Omega_{m,N}]}&\leq C'\pabs{\phi_m}_{\Wkp[1][\infty][\Omega_{m,N}]}\cdot\norm[\big]{\tilde f-p_m}_{\Lp[p][\Omega_{m,N}]}\nonumber\\
		&\alplus+C'\norm{\phi_m}_{\Lp[\infty][\Omega_{m,N}]}\cdot\pabs[\big]{\tilde f-p_m}_{\Wkp[1][p][\Omega_{m,N}]}\nonumber\\
		&\leq C'\cdot\tilde cN\cdot C_2 \left(\frac{1}{N}\right)^n\norm{\tilde f}_{\Wkp[n][p][\Omega_{m,N}]} + C'\cdot C_3 \left(\frac{1}{N}\right)^{n-1}\norm{\tilde f}_{\Wkp[n][p][\Omega_{m,N}]}\nonumber\\
		&=C_4 \left(\frac{1}{N}\right)^{n-1}\norm{\tilde f}_{\Wkp[n][p][\Omega_{m,N}]}, \label{eq:part_unity_secondbound}
	\end{align}
	where the first part of the second inequality follows from Lemma \ref{lemma:partition_of_unity} (\ref{item:pou_derivative}) with $k=1$ and an absolute constant $\tilde c\geq 1$, together with Equation \eqref{eq:local_Linf_bound} and the second part from Lemma~\ref{lemma:partition_of_unity}~(\ref{item:pou_derivative}) with $k=0$, together with Equation \eqref{eq:local_Winf_bound}. Here, $C_4 = C_4(n,d,p)>0$.
	Now it easily follows from \eqref{eq:part_unity_Linf_bound} and \eqref{eq:part_unity_secondbound} that
	\begin{equation}\label{eq:part_unity_Winf_bound}
		\norm[\big]{\phi_m(\tilde f -p_m)}_{\Wkp[1][p][\Omega_{m,N}]}\leq C_5\left(\frac{1}{N}\right)^{n-1}\norm{\tilde f}_{\Wkp[n][p][\Omega_{m,N}]},
	\end{equation}
	for some constant $C_5=C_5(n,d,p)>0$.

	\textbf{Step 3 (Global estimate in $\norm{\cdot}_{W^{k,p}}, k\in\{0,1\}$)}: To derive the global estimate, we start by noting that with property (\ref{item:pou_sum}) from Lemma \ref{lemma:partition_of_unity} we have
		\begin{equation}\label{eq:sumup}
			\tilde f(x)=\sum_{ m\in\MNd}\phi_m(x) \tilde f(x), \quad \text{for a.e.\ }x\in\cube^d.
		\end{equation} 
		Using that $\tilde f $ is an extension of $f$ on $\cube^d$ we can write for $k\in\{0,1\}$
	\begin{align*}
		\norm*{f -\sum_{m\in\MNd} \phi_m p_m}_{\Wkp[k][p][\cube^d]}^p&=\norm*{\tilde f -\sum_{m\in\MNd} \phi_m p_m}_{\Wkp[k][p][\cube^d]}^p\\
		\expl{Equation \eqref{eq:sumup}}&=\norm*{\sum_{m\in\MNd} \phi_m(\tilde f-p_m)}_{\Wkp[k][p][\cube^d]}^p\\
		&\leq\sum_{\wtilde m\in\MNd} \norm*{\sum_{m\in\MNd} \phi_m(\tilde f-p_m)}_{\Wkp[k][p][\Omega_{\wtilde m,N}]}^p,\numberthis\label{eq:global_max}
	\end{align*}
	where the last step follows from $\cube^d\subset\bigcup_{\wtilde m\in\MNd}\Omega_{\wtilde m,N}$. Now we obtain for each $\wtilde m\in\MNd$ 
	\begin{align*}
		\norm*{\sum_{m\in\MNd} \phi_m(\tilde f-p_m)}_{\Wkp[k][p][\Omega_{\wtilde m,N}]}&\leq \sum_{\substack{m\in\MNd,\vspace{0.2em}\vspace{0.2em}\\ \norm{m-\wtilde m}_{\ell^\infty} \leq 1}} \norm{\phi_m(\tilde f-p_m)}_{\Wkp[k][p][\Omega_{\wtilde m,N}]}\\
		&\leq \sum_{\substack{m\in\MNd,\vspace{0.2em}\vspace{0.2em}\\ \norm{m-\wtilde m}_{\ell^\infty} \leq 1}} \norm{\phi_m(\tilde f-p_m)}_{\Wkp[k][p][\Omega_{m,N}]}\\
		&\leq C_6 \left(\frac{1}{N}\right)^{n-k}\sum_{\substack{m\in\MNd,\vspace{0.2em}\\\norm[\tiny]{m-\wtilde m}_{\ell^\infty} \leq 1}} \norm{\tilde f}_{\Wkp[n][p][\Omega_{m,N}]}\numberthis\label{eq:max_neighbors}
	\end{align*}
	where the triangle inequality together with the support property~(\ref{item:pou_supp}) from~Lemma~\ref{lemma:partition_of_unity} is used in the first step. The second step follows again from~Lemma~\ref{lemma:partition_of_unity}~(\ref{item:pou_supp}) and the third step follows from \eqref{eq:part_unity_Linf_bound} for $k=0$ and from \eqref{eq:part_unity_Winf_bound} for $k=1$. Here $C_6=C_6(n,d,p)>0$ can be chosen independent of $k$ (e.g.\ $C_6:=\max\{C_2,C_5\}$). 

	Finally, the boundedness claim in (i) and (ii) follows from using the definition of $f_N$ and combining Equation \eqref{eq:global_max} with Equation \eqref{eq:max_neighbors}:
	\begin{align*}
		\norm{f-f_N}_{\Wkp[k][p][\cube^d]}^p&\leq\sum_{\wtilde m\in\MNd}C_6^p \left(\frac{1}{N}\right)^{(n-k)p} \left(\sum_{\substack{m\in\MNd,\vspace{0.2em}\\\norm[\tiny]{m-\wtilde m}_{\ell^\infty} \leq 1}} \norm{\tilde f}_{\Wkp[n][p][\Omega_{m,N}]}\right)^p\\
		\expl{\text{H{\"o}lder's inequality};\;$q:=1-1/p$}&\leq C_6^p \left(\frac{1}{N}\right)^{(n-k)p} \sum_{\wtilde m\in\MNd}\sum_{\substack{m\in\MNd,\vspace{0.2em}\\\norm[\tiny]{m-\wtilde m}_{\ell^\infty} \leq 1}} \norm{\tilde f}_{\Wkp[n][p][\Omega_{m,N}]}^p (3^d)^{p/q}\\
		&\leq C_6^p 3^{dp/q} \left(\frac{1}{N}\right)^{(n-k)p} 3^d \sum_{\wtilde m\in\MNd} \norm{\tilde f}_{\Wkp[n][p][\Omega_{\wtilde m,N}]}^p \\
		&\leq C_6^p 3^{dp/q} \left(\frac{1}{N}\right)^{(n-k)p} 3^d 2^d \norm{\tilde f}_{\Wkp[n][p][\bigcup_{\wtilde m\in\MNd}\Omega_{\wtilde m,N}]}^p,
	\end{align*}
	where the last two steps follow from the definition of $\Omega_{\wtilde m,N}$. Thus, we have
	\[
		\norm{f-f_N}_{\Wkp[k][p][\cube^d]}\leq C_7\left(\frac{1}{N}\right)^{n-k}\norm{\tilde f}_{\Wkp[n][p][\R^d]}\leq C_8 \left(\frac{1}{N}\right)^{n-k}\norm{f}_{\Wkp[n][p][\cube^d]}
		\]
	for $k\in\{0,1\}$, where Equation~\eqref{eq:extension_bound} was used in the first and second step. Here $C_7=C_7(n,d,p)>0$ and $C_8=C_8(n,d,p)$ are constants. The linearity of $T_k$, $k\in\{0,1\}$ is a consequence of the linearity of the averaged Taylor polynomial (cf.\ Remark \ref{remark:sob_taylor_lin}).

\textbf{Step 4 (Interpolation)}: For $0<s<1$ we use a Banach space interpolation argument. Set 
	\[
		A_0=A_1=\Wkp[n][p][\cube^d]
		\]
		together with
		\[
			B_0=\Lp[p][\cube^d]\qquad\text{and}\qquad B_2=\Wkp[1][p][\cube^d].
			\]
	Then, we can apply Theorem~\ref{thm:interpolation_operator} to the operator $T$ with $Tf:=f-f_N$ and get for a constant $C=C(n,d,p)>0$ that
	\begin{align*}
		\norm{T}_{\Lcal(W^{n,p}, W^{s,p})}&\leq \norm{T}_{\Lcal(W^{n,p}, L^p)}^{1-s} \norm{T}_{\Lcal(W^{n,p}, W^{1,p})}^s\\
		&\leq C\cdot\left(\frac{1}{N}\right)^{n(1-s)}\left(\frac{1}{N}\right)^{(n-1)s}=C\cdot\left(\frac{1}{N}\right)^{n-s},
	\end{align*}
	where we used Lemma \ref{lemma:interpolation_relations} (\ref{item:interpolation_identity}) to see that $(A_0,A_1)_{s,p}=\Wkp[n][p][\cube^d]$.
\end{proof}

The following technical lemma lays the foundation for approximating localized monomials with neural networks. Using the notation and statement (\ref{item:pou_network}) from Lemma~\ref{lemma:partition_of_unity}, a localized monomial $\phi_m x^\alpha$ can be expressed as the product of the output components of a network~$\Phi_{(m,\alpha)}$ with a suitable output dimension $n$, i.e.\ 
\[
	\phi_m(x) x^\alpha=\prod_{l=1}^n \big[\actbig{\Phi_{(m,\alpha)}}\big]_l(x).
	\]
	Given a network $\Phi$ with $n$-dimensional output, we construct a network $\Psi_{\Phi}$ that approximates the product of the output components of $\Phi$.


\begin{lemma}\label{lemma:network_multiplikation}
Let $d,m,K\in\N$ and $N\geq 1$ be arbitrary. Then there is a constant $C=C(m)>0$ such that the following holds:

	For any $\eps\in \epsin$, and any neural network $\Phi$ with $d$-dimensional input and $n$-dimensional output where $n\leq m$, and with number of layers, neurons and weights all bounded by $K$, such that 
	\[
		\norm{[\act{\Phi}]_l}_{\Wkp[k][\infty][\cube^d]}\leq N^k \quad \text{for}\quad k\in\{0,1\}\text{ and }l=1,\ldots,n
		\]
		there exists a neural network $\Psi_{\eps,\Phi}$ with $d$-dimensional input and one-dimensional output, and with number of layers, neurons and weights all bounded by $K C\log_2(\nicefrac{1}{\eps})$, such that
	\begin{equation}\label{eq:approximation_s}
		\norm*{\act{\Psi_{\eps,\Phi}}-\prod_{l=1}^n [\act{\Phi}]_l}_{\Wkp[k][\infty][\cube^d]} \leq c  N^k \eps
	\end{equation}
	for $k\in\{0,1\}$ and some constant $c=c(d,m,k)$. Moreover, 
	\begin{equation}\label{eq:zero_multiplication}
		\act{\Psi_{\eps,\Phi}}(x)=0\quad \text{if}\quad \prod_{l=1}^n [\act{\Phi}]_l(x)=0
	\end{equation}
	for $x\in\cube^d$.
\end{lemma}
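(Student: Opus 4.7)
The plan is to build $\Psi_{\eps,\Phi}$ by iteratively composing the approximate multiplication $\apmult_\delta$ from Proposition~\ref{prop:approximate_multiplication} with the outputs of $\Phi$, where the precision $\delta>0$ will be chosen as $\delta \asymp \eps / p(m)$ for a suitable polynomial $p$ depending only on $m$. Write $f_l := [\act{\Phi}]_l$ for $l=1,\dots,n$; by hypothesis $\norm{f_l}_{L^\infty} \leq N^0 = 1$, so each $f_l$ maps into $[-1,1]$, and we can invoke Proposition~\ref{prop:approximate_multiplication} with $M=1$ throughout. For $n=1$ simply set $\Psi_{\eps,\Phi} := \Phi$. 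For $n\geq 2$ define recursively the partial product networks $\Psi^{(1)} := \Phi_{1}$ (the projection of $\Phi$ onto its first output) and $\Psi^{(j+1)} := \apmult_\delta \odot P(\Psi^{(j)}, \Phi_{j+1})$, where $\Phi_{j+1}$ picks out the $(j+1)$-th output of $\Phi$ through a linear read-out of $\act{\Phi}$. Finally let $\Psi_{\eps,\Phi} := \Psi^{(n)}$.

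The heart of the proof is a simultaneous induction on $j$ controlling
\[
  \eps_j := \norm{\tilde g_j - g_j}_{L^\infty(\cube^d)}, \qquad E_j := \pabs{\tilde g_j - g_j}_{W^{1,\infty}(\cube^d)},
\]
where $g_j := \prod_{l=1}^j f_l$ and $\tilde g_j := \act{\Psi^{(j)}}$. Using the product rule, $\pabs{g_j}_{W^{1,\infty}} \leq j N$ and $\norm{g_j}_{L^\infty} \leq 1$. From Proposition~\ref{prop:approximate_multiplication} we have $\norm{\act{\apmult_\delta}(x,y) - xy}_{W^{1,\infty}((-1,1)^2)} \leq \delta$, which in particular yields $|\partial_x \act{\apmult_\delta}(x,y) - y| \leq \delta$ and $|\partial_y \act{\apmult_\delta}(x,y) - x| \leq \delta$. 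Splitting
\[
 \tilde g_{j+1} - g_{j+1} = \bigl(\act{\apmult_\delta}(\tilde g_j, f_{j+1}) - \tilde g_j f_{j+1}\bigr) + (\tilde g_j - g_j) f_{j+1}
\]
and differentiating, I expect to obtain recursions of the form $\eps_{j+1} \leq \eps_j + \delta$ and $E_{j+1} \leq (1+\delta) E_j + N \eps_j + \delta(jN + E_j + N)$. Iterating over $j \leq n \leq m$ gives $\eps_n \lesssim m\delta$ and $E_n \lesssim m^3 \delta N$, so choosing $\delta := \eps/C_m$ with $C_m = C_m(m)$ large enough produces the claimed bound $\norm{\act{\Psi_{\eps,\Phi}} - g_n}_{W^{k,\infty}} \leq c N^k \eps$ for $k\in\{0,1\}$ simultaneously.

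The zero-preservation statement (\ref{eq:zero_multiplication}) is immediate: since $\prod_l f_l(x) = 0$ forces some $f_{l_0}(x) = 0$, item (\ref{item:network_zero}) of Proposition~\ref{prop:approximate_multiplication} guarantees that the step $\tilde g_{l_0+1}(x) = \act{\apmult_\delta}(\tilde g_{l_0}(x), 0) = 0$ (or the analogous statement at step $l_0$ if $l_0=1$), and the same property then propagates the zero through every subsequent multiplication.

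For the complexity count: each copy of $\apmult_\delta$ has depth, weights, and neurons bounded by $c_1 \log_2(1/\delta) + c_2(1) = \bigO(\log_2(1/\eps))$ with constants depending only on $m$. There are $n-1 \leq m-1$ such blocks, each sparse-concatenated with parallelizations that cost at most $K$ (the complexity of $\Phi$) additional weights, neurons, and layers, using Remark~\ref{remark:neural_sparse_concat} and the parallelization bounds. Summing everything yields the claimed $K\,C\log_2(1/\eps)$ bound, with $C = C(m)$. The main technical obstacle is the coupled propagation of the $L^\infty$ and derivative errors in the inductive step — the $W^{1,\infty}$ error contains cross-terms involving $D\tilde g_j$, which must be absorbed using the a~priori bound $\pabs{\tilde g_j}_{W^{1,\infty}} \leq jN + E_j$ to close the recursion.
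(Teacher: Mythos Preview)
Your approach is essentially the same as the paper's: both proceed by induction on the number of factors, composing the approximate multiplication from Proposition~\ref{prop:approximate_multiplication} with the running partial product, and both track $L^\infty$ and $W^{1,\infty}$ errors separately via chain-rule and product-rule estimates. The paper additionally carries as an explicit inductive invariant the bound $\pabs{\act{\Psi_{\eps,\Phi}}}_{W^{1,\infty}} \leq C_1 N$ (which you recover implicitly through $\pabs{\tilde g_j}_{W^{1,\infty}} \leq jN + E_j$), and it is more careful about sharing the first $L(\Phi)-1$ layers of $\Phi$ across the construction rather than parallelizing fresh copies at each step; your version therefore incurs a worse constant $C(m)$ (of order $2^m$ from the repeated sparse concatenations) but this is still admissible since $C$ may depend on $m$.

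There is, however, one genuine gap: you cannot take $M=1$ in Proposition~\ref{prop:approximate_multiplication}. After the first step the approximate partial product $\tilde g_j$ satisfies only $\norm{\tilde g_j}_{L^\infty} \leq 1 + \eps_j$, which in general exceeds $1$, so the pair $(\tilde g_j(x), f_{j+1}(x))$ need not lie in $(-1,1)^2$ and the error bound of Proposition~\ref{prop:approximate_multiplication}~(\ref{item:network_approximation}) no longer applies to $\act{\apmult_\delta}(\tilde g_j, f_{j+1})$. The paper fixes this by taking $M = m+1$, using the inductive estimate $\norm{\tilde g_j}_{L^\infty} \leq 1 + m\eps < m+1$; any fixed $M \geq 2$ together with $\delta$ small enough that $\eps_j < 1$ would also suffice, and only changes the constants. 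With this correction your recursion and the remainder of the argument go through.

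A minor slip: in the zero-preservation argument your index is off by one --- if $f_{l_0}(x)=0$ with $l_0\geq 2$, it is $\tilde g_{l_0}(x) = \act{\apmult_\delta}(\tilde g_{l_0-1}(x),\, 0) = 0$ that vanishes first, not $\tilde g_{l_0+1}$.
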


\begin{proof}
	Let $d,K\in \N$ and $N\geq 1$. We show by induction over $m\in\N$ that the statement holds. To make the induction argument easier we will additionally show that $c=c(d,m,k)=m^{1-k}c_1^k$, where $c_1=c_1(d,m)>0$, and that the network $\Psi_{\eps,\Phi}$ can be chosen such that the first $L(\Phi)-1$ layers of $\Psi_{\eps,\Phi}$ and $\Phi$ coincide and $\pabs{\act{\Psi_{\eps,\Phi}}}_{\Wkp[1][\infty][\cube^d]}\leq C_1N$ for a constant $C_1=C_1(d,m)>0$.

	If $m=1$, then we can choose $\Psi_{\eps,\Phi}=\Phi$ for any $\eps\in\epsin$ and the claim holds. 
	
	Now, assume that the claim holds for some $m\in\N$. We show that it also holds for $m+1$. For this, let $\eps \in\epsin$ and let $\Phi=((A_1,b_1),(A_2,b_2),\dots,(A_L,b_L))$ be a neural network with $d$-dimensional input and $n$-dimensional output, where $n\leq m+1$, and with number of layers, neurons and weights all bounded by $K$, where each $A_l$ is an $N_l\times \sum_{k=0}^{l-1} N_k$ matrix, and $b_l\in\R^{N_l}$ for $l=1,\ldots L$.
	
	\textbf{Case 1}: If $n\leq m$, then we use the induction hypothesis and get that there is a constant $C_0=C_0(m)>0$ and a neural network $\Psi_{\eps,\Phi}$ with $d$-dimensional input and one-dimensional output, and at most $KC_0\log_2(\nicefrac{1}{\eps})$ layers, neurons and weights such that 
	\begin{equation*}
		\norm*{\act{\Psi_{\eps,\Phi}}-\prod_{l=1}^n [\act{\Phi}]_l}_{\Wkp[k][\infty][\cube^d]} \leq m^{1-k}c_1^k N^k \eps\leq (m+1)^{1-k}c_1^k N^k\eps
	\end{equation*}
	for $k\in\{0,1\}$ and $c_1=c_1(d,m)$. Moreover, 
	\begin{equation*}
		\act{\Psi_{\eps,\Phi}}(x)=0\quad \text{if}\quad \prod_{l=1}^n [\act{\Phi}]_l(x)=0,
	\end{equation*}
	for any $x\in\cube^d$.
	Furthermore, we have $\pabs{\act{\Psi_{\eps,\Phi}}}_{\Wkp[1][\infty][\cube^d]}\leq C_1 N$, for $C_1=C_1(d,m)$.
	
	\textbf{Case 2}: Now, we assume that $n=m+1$ and show the claim for constants $\tilde C_0,\tilde c_1$ and~$\tilde C_1$ depending on $m+1$, possibly different from the constants $C_0,c_1$ and $C_1$ from Case~1, respectively. The maximum of each pair of constants fulfills then the claim for networks with output dimension $n\leq m+1$.
	
	We denote by $\Phi_m$ the neural network with $d$-dimensional input and $m$-dimensional output which results from $\Phi$ by removing the last output neuron and corresponding weights. In detail, we write 
	\[
		A_L=\bmat{c}{
			A_L^{(1,m)}\\[1em]
			a_L^{(m+1)}
		}\quad\text{and}\quad
		b_L=
		\bmat{c}{
			b_L^{(1,m)}\\[1em]
			b_L^{(m+1)}
		},
		\] where $A_L^{(1,m)}$ is a $m\times\sum_{k=0}^{L-1}N_k$ matrix and $a_L^{(m+1)}$ is a $1\times\sum_{k=0}^{L-1}N_k$ vector, and $b_L^{(1,m)}\in\R^m$ and $b_L^{(m+1)}\in\R^1$. Now we set 
		\[
			\Phi_m:=\Big((A_1,b_1),(A_2,b_2),\dots,(A_{L-1},b_{L-1}),\Big(A_L^{(1,m)},b_L^{(1,m)}\Big)\Big).
			\]
		Using the induction hypothesis and the constants $C_0,c_1$ and $C_1$ from Case~1, we get that there is a neural network $\Psi_{\eps,\Phi_m}=((A'_1,b'_1),(A'_2,b'_2),\dots,(A'_{L'},b'_{L'}))$ with $d$-dimensional input and one-dimensional output, and at most $KC_0\log_2(\nicefrac{1}{\eps})$ layers, neurons and weights such that 
	\begin{equation}\label{eq:induction_bound}
		\norm*{\act{\Psi_{\eps,\Phi_m}}-\prod_{l=1}^m [\act{\Phi_m}]_l}_{\Wkp[k][\infty][\cube^d]} \leq m^{1-k}c_1^k N^k \eps
	\end{equation}
	for $k\in\{0,1\}$. Moreover, 
	\begin{equation}\label{eq:induction_zero}
		\act{\Psi_{\eps,\Phi_m}}(x)=0\quad \text{if}\quad \prod_{l=1}^m [\act{\Phi_m}]_l(x)=0,
	\end{equation}
	for any $x\in\cube^d$.
	Furthermore, we can assume that $\pabs{\act{\Psi_{\eps,\Phi_m}}}_{\Wkp[1][\infty][\cube^d]}\leq C_1 N$, and that the first $L(\Phi)-1$ layers of $\Psi_{\eps,\Phi_m}$ and $\Phi_m$ coincide and, thus, also the first $L(\Phi)-1$ layers of $\Psi_{\eps,\Phi_m}$ and $\Phi$, i.e.\ $A_l=A'_l$ for $l=1,\ldots,L(\Phi)-1$. 
	
	Now, we add the formerly removed neuron with corresponding weights back to the last layer of $\Psi_{\eps,\Phi_m}$. For the resulting network
	\[
		\widetilde \Psi_{\eps,\Phi}:=\left((A'_1,b'_1),(A'_2,b'_2),\dots,(A'_{L'-1},b'_{L'-1}),
		\left(
		\bmat{c c}{
			\multicolumn{2}{c}{A'_{L'}}\\[1em]
			a^{(m+1)}_L & 0_{\R^{1,\sum_{k=L}^{L'}N_L'}}
		},
	\bmat{c}{
		 b'_{L'}\\[1em]
		 b_L^{(m+1)}
	}\right)\right)
		\]
		it holds that the first $L-1$ layers of $\widetilde \Psi_{\eps,\Phi}$ and $\Phi$ coincide, and $\widetilde \Psi_{\eps,\Phi}$ is a neural network with two-dimensional output. Note that
		\begin{align*}
			&\norm*{\big[\actbig{\wtilde\Psi_{\eps,\Phi}}\big]_1}_{\Linfc}\\
			&\alplus =\norm{\act{\Psi_{\eps,\Phi_m}}}_{\Linfc}\\
			&\alplus\leq \norm*{\act{\Psi_{\eps,\Phi_m}}-\prod_{l=1}^m [\act{\Phi_m}]_l}_{\Linfc} +\norm*{\prod_{l=1}^m [\act{\Phi_m}]_l}_{\Linfc}\\
			&\alplus\leq m \eps + 1< m+1,
		\end{align*}
		where we used Equation \eqref{eq:induction_bound} for $k=0$. Additionally, we have 
		\[
 				\norm*{\big[\actbig{\wtilde\Psi_{\eps,\Phi}}\big]_2}_{\Linfc}=\norm{[\act{\Phi}]_{m+1}}_{\Linfc}\leq 1.
			\]
			Now, we denote by $\apmult$ the network from Proposition~\ref{prop:approximate_multiplication} with $M=m+1$ and accuracy $\eps$ and define 
		\[
			\Psi_{\eps,\Phi}:=\apmult\odot\wtilde \Psi_{\eps,\Phi}.
			\] 
			Consequently, $\Psi_{\eps,\Phi}$ has $d$-dimensional input, one-dimensional output and, combining the induction hypothesis with statement (\ref{item:network_complexity_apmult}) of Proposition~\ref{prop:approximate_multiplication} and Remark \ref{remark:neural_sparse_concat}, at most 
			\[
				2KC_0\log_2(\nicefrac{1}{\eps}) + 2(c'\log_2(\nicefrac{1}{\eps}) +c'')\leq KC\log_2(\nicefrac{1}{\eps})
			\]
	layers, number of neurons and weights. Here $c'$ and $c''=c''(m+1)$ are the constants from Proposition~\ref{prop:approximate_multiplication} (\ref{item:network_complexity_apmult}) for the choice $M=m+1$ and $C=C(m+1)>0$ is a suitable constant. Clearly, the first $L-1$ layers of $\Psi_{\eps,\Phi}$ and $\Phi$ coincide and for the approximation properties it holds that 
	\begin{align*}
		&\pabs*{\act{\Psi_{\eps,\Phi}}-\prod_{l=1}^{m+1} [\act{\Phi}]_l}_{\Wkp[k][\infty][\cube^d]}\\
		&\alplus = \pabs*{\act{\apmult}\circ\actbig{\wtilde\Psi_{\eps,\Phi}}-[\act{\Phi}]_{m+1}\cdot\prod_{l=1}^{m} [\act{\Phi}]_l}_{\Wkp[k][\infty][\cube^d]}\\
		&\alplus \leq \pabs[\Big]{\act{\apmult}\circ(\act{\Psi_{\eps,\Phi_m}},[\act{\Phi}]_{m+1}) -\act{\Psi_{\eps,\Phi_m}}\cdot[\act{\Phi}]_{m+1}}_{\Wkp[k][\infty][\cube^d]}\\
		&\alplus \alplus+\pabs*{[\act{\Phi}]_{m+1}\cdot\Big(\act{\Psi_{\eps,\Phi_m}}-\prod_{l=1}^{m} [\act{\Phi}]_l\Big)}_{\Wkp[k][\infty][\cube^d]},\numberthis\label{eq:induction_add_zero}
	\end{align*}
	for $k\in\{0,1\}$.
	We continue by considering the first term of the Inequality \eqref{eq:induction_add_zero} for $k=0$ and obtain
	\begin{align*}
		&\norm[\Big]{\act{\apmult}\circ(\act{\Psi_{\eps,\Phi_m}},[\act{\Phi}]_{m+1}) -\act{\Psi_{\eps,\Phi_m}}\cdot[\act{\Phi}]_{m+1}}_{\Lp[\infty][\cube^d]}\\
		&\alplus\leq\norm{\act{\apmult}(x,y)-x\cdot y}_{L^\infty(\intervalo{-(m+1)}{m+1}^2;dxdy)} \leq \eps,\numberthis\label{eq:1term_0k}
	\end{align*}
	where we used Proposition~\ref{prop:approximate_multiplication}~(i) for the last step. Next, we consider the same term for $k=1$ and apply the chain rule from Corollary \ref{cor:composition_norm}. For this, let $\hat C=\hat C(d)$ be the constant from Corollary \ref{cor:composition_norm} (for $n=d$ and $m=2$). We get
	\begin{align*}
		&\pabs[\Big]{\act{\apmult}\circ(\act{\Psi_{\eps,\Phi_m}},[\act{\Phi}]_{m+1}) -\act{\Psi_{\eps,\Phi_m}}\cdot[\act{\Phi}]_{m+1}}_{\Wkp[1][\infty][\cube^d]}\\
		&\alplus\leq \hat C\cdot\pabs{\act{\apmult}(x,y)-x\cdot y}_{\Wkp[1][\infty][\intervalo{-(m+1)}{m+1}^2;dxdy]} \pabs*{\actbig{\wtilde\Psi_{\eps,\Phi}}}_{\Wkpm[1][\infty][\cube^d][2]}\\
		&\alplus\leq \hat C\cdot\eps\max\{C_1 N,N\}=C_1' \eps N,\numberthis\label{eq:1term_1k}
	\end{align*}
	where we used the induction hypothesis together with $\pabs{[\act{\Phi}]_{m+1}}_{\Wkp[1][\infty][\cube^d]}\leq N$ in the third step, and $C_1'=C_1'(d,m+1)>0$ is a suitable constant.
	
	To estimate the second term of \eqref{eq:induction_add_zero} for $k=0$ we use the induction hypothesis (for $k=0$) and get
	\begin{align*}
		&\norm*{[\act{\Phi}]_{m+1}\cdot\Big(\act{\Psi_{\eps,\Phi_m}}-\prod_{l=1}^{m} [\act{\Phi}]_l\Big)}_{\Lp[\infty][\cube^d]}\\
		&\alplus \leq \norm{[\act{\Phi}]_{m+1}}_{\Lp[\infty][\cube^d]}\cdot\norm*{\act{\Psi_{\eps,\Phi_m}}-\prod_{l=1}^{m} [\act{\Phi}]_l}_{\Linfc}\leq 1\cdot m\cdot\eps.\numberthis\label{eq:2term_0k}
	\end{align*}
	For $k=1$ we apply the product rule from Lemma~\ref{lemma:product_rule_bound_p} together with $\norm{[\act{\Phi}]_{m+1}}_{\Li}\leq 1$ and get
	\begin{align*}
		&\pabs*{[\act{\Phi}]_{m+1}\cdot\Big(\act{\Psi_{\eps,\Phi_m}}-\prod_{l=1}^{m} [\act{\Phi}]_l\Big)}_{\Wkp[1][\infty][\cube^d]}\\
		&\alplus \leq \pabs{[\act{\Phi}]_{m+1}}_{\Wkp[1][\infty][\cube^d]}\cdot\norm*{\act{\Psi_{\eps,\Phi_m}}-\prod_{l=1}^{m} [\act{\Phi}]_l}_{\Linfc}\\
		&\alplus\alplus + \norm{[\act{\Phi}]_{m+1}}_{\Linfc}\cdot\pabs*{\act{\Psi_{\eps,\Phi_m}}-\prod_{l=1}^{m} [\act{\Phi}]_l}_{\Wkp[1][\infty][\cube^d]}\\
		&\alplus\leq N\cdot m\eps+1\cdot c_1 N\eps=c_1'N\eps,\numberthis\label{eq:2term_1k}
	\end{align*}
	where we used the induction hypothesis for $k=1$, and $c_1'=c_1'(d,m+1)>0$.

	Combining \eqref{eq:induction_add_zero} with \eqref{eq:1term_0k} and \eqref{eq:2term_0k} we have
	\begin{equation}\label{eq:final_s_0}
		\norm*{\act{\Psi_{\eps,\Phi}}-\prod_{l=1}^{m+1} [\act{\Phi}]_l}_{\Lp[\infty][\cube^d]}\leq \eps+m\cdot \eps=(m+1)\cdot\eps,
	\end{equation}
	and in the same way a combination of \eqref{eq:induction_add_zero} with \eqref{eq:1term_1k} and \eqref{eq:2term_1k} yields
	\begin{equation*}
		\pabs*{\act{\Psi_{\eps,\Phi}}-\prod_{l=1}^{m+1} [\act{\Phi}]_l}_{\Wkp[1][\infty][\cube^d]}\leq (C_1'+c_1')\cdot N\cdot \eps=c_1'' N\eps,
	\end{equation*}
	where $c_1''=c_1''(d,m+1)>0$. Putting together the two previous estimates yields
	\begin{equation}\label{eq:final_s_1}
		\norm*{\act{\Psi_{\eps,\Phi}}-\prod_{l=1}^{m+1} [\act{\Phi}]_l}_{\Wkp[1][\infty][\cube^d]}\leq c_1''' N\eps,
	\end{equation}
	for a suitable constant $c_1'''=c_1'''(d,m+1)>0$. 	

	We now show Equation \eqref{eq:zero_multiplication} for $m+1$. To this end, assume that $[\act{\Phi}]_l(x)=0$ for some $l\in\{1,\ldots,m+1\}$ and $x\in\cube^d$. If $l\leq m$, then Equation \eqref{eq:induction_zero} implies that 
	\[
		\big[\actbig{\wtilde\Psi_{\eps,\Phi}}\big]_1(x)=\act{\Psi_{\eps,\Phi_m}}(x)=0.
	\]
	If $l=m+1$, then we have 
	\[
		\big[\actbig{\wtilde\Psi_{\eps,\Phi}}\big]_2(x)=[\act{\Phi}(x)]_{m+1}(x)=0.
	\]
	Hence, by application of Proposition \ref{prop:approximate_multiplication}, we have 
	\[
		\act{\Psi_{\eps,\Phi}}(x)=\act{\apmult}\left(\big[\actbig{\wtilde\Psi_{\eps,\Phi}}\big]_1(x),\big[\actbig{\wtilde\Psi_{\eps,\Phi}}\big]_2(x)\right)=0.
		\]
	Finally, we need to show that there is a constant $C_1''=C_1''(d,m+1)>0$ such that 
	\[
		\pabs{\act{\Psi_{\eps,\Phi}}}_{\Wkp[1][\infty][\cube^d]}\leq C_1'' N.
		\]
		Similarly as in \eqref{eq:1term_1k} we have for a constant $C_1''=C_1''(d,m+1)>0$ that
	\begin{align*}
		\pabs{\act{\Psi_{\eps,\Phi}}}_{\Wkp[1][\infty][\cube^d]}&=\pabs*{\act{\apmult}\circ\actbig{\wtilde\Psi_{\eps,\Phi}}}_{\Wkp[1][\infty][\cube^d]}\\
		&\leq \hat C\cdot \pabs{\act{\apmult}}_{\Wkp[1][\infty][\intervalo{-(m+1)}{m+1}^2]}\cdot\pabs*{\actbig{\wtilde\Psi_{\eps,\Phi}}}_{\Wkpm[1][\infty][\cube^d][2]}\\
		&\leq \hat C\cdot \hat c\cdot(m+1)\cdot\max\left\{C_1 N,N\right\}= C_1''\cdot N,
	\end{align*}
	where Corollary \ref{cor:composition_norm} was used for the second step and $\hat c$ is the constant from Proposition~\ref{prop:approximate_multiplication}~(\ref{item:network_derivative}) which together with an argument as in \eqref{eq:1term_1k} implies the third step.

Taking the maximum of the each pair of constants derived in Case~1 and Case~2 concludes the proof.
\end{proof}

In the next lemma, we approximate a sum of localized polynomials with a neural network. One of the difficulties is to control the derivative of the localizing functions from the partition of unity.
\begin{lemma}\label{lemma:network_polynomial_approximation}
	Let $d,N\in \N$, $n\in\N_{\geq 2}$, $1\leq p\leq\infty$, and $0\leq s \leq 1$, and let  $\Psi = \Psi(d,N)=\left\{\phi_m:m\in\MNd\right\}$ be the partition of unity from Lemma \ref{lemma:partition_of_unity}. Then, there are constants $C_1=C_1(n,d,p,s)>0$ and $C_2=C_2(n,d),C_3=C_3(n,d)>0$ with the following properties:

	For any $\eps\in \epsin$ there is a neural network architecture $\Arch_\eps=\Arch_\eps(d,n,N,\eps)$ with $d$-dimensional input and one-dimensional output, with at most $C_2\cdot\log_2(\nicefrac{1}{\eps})$ layers and $C_3 (N+1)^d \log_2(\nicefrac{1}{\eps})$ weights and neurons, such that the following holds: Let $f\in\Wkp[n][p][\cube^d]$ and $p_m(x):=p_{f,m}(x)=\sum_{\pabs{\alpha}\leq n-1} c_{m,\alpha}x^\alpha$ for $m\in\MNd$ be the polynomials from Lemma~\ref{lemma:polynomial_approximation}, then there is a neural network $\Phi_{P,\eps}$ that has architecture $\Arch_\eps$ such that 
		\[
		\norm*{\sum_{m\in\MNd}\phi_m p_m -\act{\Phi_{P,\eps}}}_{\Wkp[s][p][\cube^d]}\leq C_1 \norm{f}_{\Wkp[n][p][\cube^d]} N^s \eps.
		\]
\end{lemma}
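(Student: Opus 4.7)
The plan is to rewrite
\[
\sum_{m\in\MNd} \phi_m(x)\, p_m(x) = \sum_{m\in\MNd}\sum_{\pabs{\alpha}\leq n-1} c_{m,\alpha}\, \phi_m(x)\, x^\alpha
\]
and approximate each localized monomial $\phi_m(x) x^\alpha$ by a neural network, then combine the resulting pieces by parallelization followed by a single linear output layer whose weights are the coefficients $c_{m,\alpha}$. Since the topology of every component network depends only on $d,n,N,\eps$, a single architecture $\Arch_\eps$ will work for all $f\in\Wkp[n][p][\cube^d]$; only the values of the output-layer weights encode $f$.

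First I would construct, for every pair $(m,\alpha)$ with $m\in\MNd$ and $\pabs{\alpha}\leq n-1$, a small auxiliary network $\Phi_{(m,\alpha)}$ with $d$-dimensional input and $(d+\pabs{\alpha})$-dimensional output whose component realizations multiply to $\phi_m(x) x^\alpha$. The first $d$ components are the univariate factors $\psi(3N(x_l-m_l/N))$ supplied by Lemma~\ref{lemma:partition_of_unity}~(\ref{item:pou_network}); the remaining $\pabs{\alpha}$ components are copies of the coordinate projections $x_j$ with multiplicities $\alpha_j$, each trivially realized by a one-layer network. Every component is bounded in $\Wkp[k][\infty][\cube^d]$ by $(cN)^k$ for $k\in\{0,1\}$, so Lemma~\ref{lemma:network_multiplikation}, applied with an auxiliary accuracy $\eps'$ to be fixed, yields a network $\Psi_{(m,\alpha)}$ of depth and size $\bigO(\log_2(1/\eps'))$ satisfying
\[
\norm*{\act{\Psi_{(m,\alpha)}} - \phi_m(\cdot)\,(\cdot)^\alpha}_{\Wkp[k][\infty][\cube^d]} \leq \hat c\, N^k \eps', \quad k\in\{0,1\},
\]
together with the crucial property $\act{\Psi_{(m,\alpha)}}(x)=0$ whenever $\phi_m(x)=0$; in particular $\supp \act{\Psi_{(m,\alpha)}}\subset \Omega_{m,N}$.

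Next I would parallelize all $\bigO((N+1)^d)$ networks $\Psi_{(m,\alpha)}$ and attach a single output neuron that computes the linear combination $\act{\PhiPs} := \sum_{m,\alpha} c_{m,\alpha}\,\act{\Psi_{(m,\alpha)}}$. This gives an architecture $\Arch_\eps$ with $\bigO(\log_2(1/\eps))$ layers and $\bigO((N+1)^d \log_2(1/\eps))$ weights and neurons. To bound the error in $\Wkp[k][p][\cube^d]$ for $k\in\{0,1\}$, I use that each error term $c_{m,\alpha}(\act{\Psi_{(m,\alpha)}}-\phi_m x^\alpha)$ is supported in $\Omega_{m,N}$ (volume $\sim N^{-d}$) and that the family $\{\Omega_{m,N}\}_{m\in\MNd}$ has bounded overlap (every $x$ lies in at most $3^d$ of them). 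Combined with the coefficient estimate $\pabs{c_{m,\alpha}} \leq c\norm{f}_{\Wkp[n][p][\cube^d]} N^{d/p}$ from Lemma~\ref{lemma:polynomial_approximation}, a Hölder-plus-summation argument in the spirit of Step~3 of that proof gives
\[
\norm*{\sum_{m,\alpha} c_{m,\alpha}\bigl(\act{\Psi_{(m,\alpha)}}-\phi_m x^\alpha\bigr)}_{\Wkp[k][p][\cube^d]} \leq C\,\norm{f}_{\Wkp[n][p][\cube^d]}\, N^k\eps',
\]
the cancellation between the $N^{d/p}$ from the coefficients and the $N^{-d/p}$ from integration over a set of measure $N^{-d}$ being the mechanism that keeps the constant independent of $N$. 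Choosing $\eps'$ proportional to $\eps$ settles the endpoint cases $s\in\{0,1\}$.

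For general $0\leq s\leq 1$, I would close the argument by Banach space interpolation as in Step~4 of Lemma~\ref{lemma:polynomial_approximation}. The operator $f \mapsto \sum_m\phi_m p_m - \act{\PhiPs}$ is linear in $f$ (because the $c_{m,\alpha}$ are linear functionals of $f$ via the averaged Taylor polynomial, cf.\ Remark~\ref{remark:sob_taylor_lin}) and bounded from $\Wkp[n][p][\cube^d]$ into $\Lp[p][\cube^d]$ and into $\Wkp[1][p][\cube^d]$ with rates $\eps$ and $N\eps$ respectively, so Theorem~\ref{thm:interpolation_operator} produces the intermediate rate $N^s\eps$ and the claim follows. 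The main technical hurdle is the $\Wkp[1][p]$ estimate: differentiation of $\phi_m$ introduces the factor $N$, and preventing this factor from accumulating across the $(N+1)^d$ summands requires both the support-localization property $\supp\act{\Psi_{(m,\alpha)}}\subset\Omega_{m,N}$ (from Lemma~\ref{lemma:network_multiplikation}) and the bounded overlap of the $\Omega_{m,N}$.
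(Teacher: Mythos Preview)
Your overall strategy---build $\Psi_{(m,\alpha)}$ via Lemma~\ref{lemma:network_multiplikation}, parallelize, append a linear layer with the $c_{m,\alpha}$, then estimate in $W^{k,p}$ for $k\in\{0,1\}$ and interpolate---is exactly the paper's. Two points deserve attention.

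First, the coefficient bound you quote is too weak. You write $\pabs{c_{m,\alpha}}\leq c\norm{f}_{\Wkp[n][p][\cube^d]}N^{d/p}$, but Lemma~\ref{lemma:polynomial_approximation} actually gives the \emph{local} bound $\pabs{c_{m,\alpha}}\leq c\norm{\tilde f}_{\Wkp[n][p][\Omega_{m,N}]}N^{d/p}$. This matters: with the global bound, the cancellation you describe (``$N^{d/p}$ from the coefficients against $N^{-d/p}$ from the volume'') makes each of the $(N+1)^d$ summands of size $\sim\norm{f}N^k\eps$, and the bounded-overlap argument then leaves you with an unwanted factor $(N+1)^{d/p}$ after summation in $L^p$. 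The mechanism that actually keeps the constant independent of $N$ is that, after the local cancellation, the $m$-th summand is controlled by $\norm{\tilde f}_{\Wkp[n][p][\Omega_{m,N}]}$, and it is the estimate $\sum_m \norm{\tilde f}_{\Wkp[n][p][\Omega_{m,N}]}^p \lesssim \norm{\tilde f}_{\Wkp[n][p][\R^d]}^p$ (itself a consequence of bounded overlap of the $\Omega_{m,N}$) that closes the sum. This is precisely what the paper does in its Step~3.

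Second, for the fractional case the paper does not invoke operator interpolation (Theorem~\ref{thm:interpolation_operator}); it simply applies the interpolation inequality of Corollary~\ref{cor:interpol_norm} to the fixed function $\sum_m\phi_m p_m-\act{\PhiPs}$, which is shorter and avoids checking linearity. Your route through Theorem~\ref{thm:interpolation_operator} is correct too (the map $f\mapsto\sum_m\phi_m p_{f,m}-\act{\PhiPs}$ is indeed linear, since the $c_{m,\alpha}$ are linear in $f$ and enter only in the final affine layer), but it is a detour here.
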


\begin{proof}
	\textbf{Step 1 (Approximating localized monomials $\phi_m x^\alpha$)}: Let $|\alpha|\leq n-1$ and $m \in \{0, \dots, N \}^d$. It is easy to see that there is a neural network $\Phi_\alpha$ with $d$-dimensional input and $\pabs{\alpha}$-dimensional output, with one layer and at most $n-1$ weights and $n-1+d$ neurons such that
	\[
		x^\alpha=\prod_{l=1}^{\pabs{\alpha}}[\act{\Phi_\alpha}]_l(x)\quad\text{for all } x\in\cube^d 
		\]
		and
	\begin{equation}\label{eq:phi_poly_bound}
		\norm{[\act{\Phi_\alpha}]_l}_{\Wkp[k][\infty][\cube^d]}\leq 1 \quad \text{for all }l=1,\ldots,\pabs{\alpha}\text{ and } k\in\{0,1\}. 
	\end{equation}
	Let now $\Phi_m$ be the neural network and $C,c\geq 1$ the constants from Lemma \ref{lemma:partition_of_unity} (\ref{item:pou_network}) and define the network
	\[
		\Phi_{m,\alpha}:=P(\Phi_m,\Phi_\alpha).
		\]
		Then $\Phi_{m,\alpha}$ has at most $3\leq K_0$ layers, $Cd+n-1\leq K_0$ nonzero weights, and $Cd+n-1+d\leq K_0$ neurons for a suitable constant $K_0=K_0(n,d)\in\N$, and $\prod_{l=1}^{\pabs{\alpha}+d}[\act{\Phi_{m,\alpha}}]_l(x)=\phi_m x^\alpha$ for all $x\in\cube^d$. Moreover, as a consequence of Lemma \ref{lemma:partition_of_unity} (\ref{item:pou_network}) together with Equation \eqref{eq:phi_poly_bound} we have 
	\begin{equation*}
		\norm{[\act{\Phi_{m,\alpha}}]_l}_{\Wkp[k][\infty][\cube^d]}\leq (c N)^k \quad\text{for all }l=1,\ldots,\pabs{\alpha}+d\text{ and } \text{for } k\in\{0,1\}. 
	\end{equation*}
	To construct an approximation of the localized monomials $\phi_m x^\alpha$, let $\Psi_{\eps,(m,\alpha)}$ be the neural network provided by Lemma \ref{lemma:network_multiplikation} (with $\Phi_{m,\alpha}$ instead of $\Phi$, $m=n-1+d\in\N$, $K=K_0\in\N$ and $cN$ instead of $N$) for $m\in\{0,\ldots,N\}^d$ and $\alpha\in\N_0^d,\pabs{\alpha}\leq n-1$. There exists a constant $C_1=C_1(n,d)\geq 1$ such that $\Psi_{\eps,(m,\alpha)}$ has at most $C_1\log_2(\nicefrac{1}{\eps})$ layers, number of neurons and weights. Moreover, 
	\begin{equation}\label{eq:approx_mon_approx}
		\norm*{\phi_m(x) x^\alpha -\actbig{\Psi_{\eps,(m,\alpha)}}(x)}_{\Wkpd[k][\infty][\cube^d]{x}}\leq c' N^k \eps
	\end{equation} for a constant $c'=c'(n,d,k)>0$ and $k\in\{0,1\}$, and
	\begin{equation}\label{eq:support_mon_approx}
		\actbig{\Psi_{\eps,(m,\alpha)}}(x)=0\quad\text{if}\quad \phi_m(x)x^\alpha=0\quad\text{for all }x\in\cube^d.
	\end{equation}

\textbf{Step 2 (Constructing an architecture capable of approximating sums of localized polynomials)}:
	We set 
	\[
		M:=\pabs{\{(m,\alpha):m\in\MNd, \alpha\in\N_0^d,\pabs{\alpha}\leq n-1\}}
		\]
		and define the matrix $\As\in\R^{1\times M}$ by $\As:=[c_{m,\alpha} :m\in\MNd, \alpha\in\N_0^d,\pabs{\alpha}\leq n-1]$ and the neural network $\Phis:=((\As,0))$. Finally, we set 
	\[
		\PhiPs:=\Phis\odot P\big(\Psi_{\eps,(m,\alpha)}:m\in\MNd, \alpha\in\N_0^d,\pabs{\alpha}\leq n-1\big).
		\]
		Then, there are constants $C_2=C_2(n,d),C_3=C_3(n,d)>0$ such that $\PhiPs$ is a neural network with $d$-dimensional input and one-dimensional output, with at most $1+C_1\log_2(\nicefrac{1}{\eps})\leq C_2 \log_2(\nicefrac{1}{\eps})$ layers, 
		\[
			2(M+M C_1\log_2(\nicefrac{1}{\eps}))\leq 4MC_1\log_2(\nicefrac{1}{\eps})\leq C_3 (N+1)^d \log_2(\nicefrac{1}{\eps})
		\]
	nonzero weights and neurons, and
		\[
			\act{\PhiPs}=\sum_{m\in\MNd}\sum_{|\alpha|\leq n-1}c_{m,\alpha}\Psi_{\eps,(m,\alpha)}.
		\]

	Note that the network $\Phi_{P,\eps}$ only depends on $p_{f,m}$ (and thus on $f$) via the coefficients $c_{m,\alpha}$. Now, it is easy to see that there exists a neural network architecture $\Arch_\eps=\Arch_\eps(d,n,N,\eps)$ with $L(\Arch_\eps)\leq C_2\log_2(\nicefrac{1}{\eps})$ layers and number of neurons and weights bounded by $C_3(N+1)^d\log_2(\nicefrac{1}{\eps})$ such that $\Phi_{P,\eps}$ has architecture $\Arch_\eps$ for every of choice of coefficients $c_{m,\alpha}$, and hence for every choice of $f$.
	
	\textbf{Step 3 (Estimating the approximation error in $\norm{\cdot}_{W^{k,p}}, k\in\{0,1\}$)}:
	For each $m\in \MNd$ we set 
	\[
		\Omega_{m,N}:=B_{\frac{1}{N},\norm{\cdot}_\infty}\Big(\frac{m}{N}\Big)
		\]
		and we get for $k\in\{0,1\}$
		\begin{align*}
		&\norm*{\sum_{m\in\MNd}\phi_m(x) p_m(x) -\act{\Phi_{P,\eps}}(x)}_{\Wkpd[k][p][\cube^d]{x}}^p\\
		&\alplus=\norm*{\sum_{m\in\MNd}\sum_{|\alpha|\leq n-1}c_{m,\alpha}\Big(\phi_m(x) x^\alpha -\actbig{\Psi_{\eps,(m,\alpha)}}(x)\Big)}_{\Wkpd[k][p][\cube^d]{x}}^p\\
			&\alplus\leq\sum_{\wtilde m\in\MNd}\norm*{\sum_{m\in\MNd}\sum_{|\alpha|\leq n-1}c_{m,\alpha}\Big(\phi_m(x) x^\alpha -\actbig{\Psi_{\eps,(m,\alpha)}}(x)\Big)}_{\Wkpd[k][p][\Omega_{\wtilde m,N}\cap\cube^d]{x}}^p\numberthis\label{eq:estimate_outer_sum},
	\end{align*}
	where the last step is a consequence of $\cube^d\subset\bigcup_{\wtilde m\in\MNd}\Omega_{\wtilde m,N}$. For $\wtilde m\in\MNd$ we have
	\begin{align*}
		&\norm*{\sum_{m\in\MNd}\sum_{|\alpha|\leq n-1}c_{m,\alpha}\Big(\phi_m(x) x^\alpha -\actbig{\Psi_{\eps,(m,\alpha)}}(x)\Big)}_{\Wkpd[k][p][\Omega_{\wtilde m,N}\cap\cube^d]{x}}\\
		&\alplus\leq\sum_{m\in\MNd}\sum_{|\alpha|\leq n-1}\pabs{c_{m,\alpha}}\norm*{\phi_m(x) x^\alpha -\actbig{\Psi_{\eps,(m,\alpha)}}(x)}_{\Wkpd[k][p][\Omega_{\wtilde m,N}\cap\cube^d]{x}}\\
		&\alplus\leq c_1 \sum_{m\in\MNd} \sum_{\pabs{\alpha}\leq n-1}\norm{\tilde f}_{\Wkp[n-1][p][\Omega_{m,N}]}N^{d/p} \norm*{\phi_m(x) x^\alpha -\actbig{\Psi_{\eps,(m,\alpha)}}(x)}_{\Wkpd[k][p][\Omega_{\wtilde m,N}\cap\cube^d]{x}}\numberthis\label{eq:approximate_loc_taylor_with_network},
	\end{align*}
	where we used the triangle inequality in the first step and Lemma~\ref{lemma:polynomial_approximation} in the second step. Here $c_1=c_1(n,d)>0$ is a constant. Next, note that 
	\begin{align*}
		&\norm*{\phi_m(x) x^\alpha -\actbig{\Psi_{\eps,(m,\alpha)}}(x)}_{\Wkpd[k][p][\Omega_{\wtilde m,N}\cap\cube^d]{x}}\\
		&\alplus\leq \lambda\left(\Omega_{\wtilde m,N}\cap\cube^d\right)^{1/p}(d+1)^{1/p}\norm*{\phi_m(x) x^\alpha -\actbig{\Psi_{\eps,(m,\alpha)}}(x)}_{\Wkpd[k][\infty][\Omega_{\wtilde m,N}\cap\cube^d]{x}}\\
		&\alplus\leq c_3 \left(\frac{1}{N}\right)^{d/p}\norm*{\phi_m(x) x^\alpha -\actbig{\Psi_{\eps,(m,\alpha)}}(x)}_{\Wkpd[k][\infty][\Omega_{\wtilde m,N}\cap\cube^d]{x}},
	\end{align*}
	where $\lambda$ denotes the Lebesgue measure and $c_3=c_3(d,p)>0$ is a constant. Combining Equation~\eqref{eq:approximate_loc_taylor_with_network} with the last estimate yields
	\begin{align*}
		&\norm*{\sum_{m\in\MNd}\sum_{|\alpha|\leq n-1}c_{m,\alpha}\Big(\phi_m(x) x^\alpha -\actbig{\Psi_{\eps,(m,\alpha)}}(x)\Big)}_{\Wkpd[k][p][\Omega_{\wtilde m,N}\cap\cube^d]{x}}\\
		&\alplus\leq c_4 \sum_{m\in\MNd} \sum_{\pabs{\alpha}\leq n-1}\norm{\tilde f}_{\Wkp[n-1][p][\Omega_{m,N}]}\norm*{\phi_m(x) x^\alpha -\actbig{\Psi_{\eps,(m,\alpha)}}(x)}_{\Wkpd[k][\infty][\Omega_{\wtilde m,N}\cap\cube^d]{x}}\\
		&\alplus\leq c_4 \sum_{\substack{m\in\MNd,\vspace{0.2em}\\\norm[\tiny]{m-\wtilde m}_{\ell^\infty} \leq 1}} \sum_{\pabs{\alpha}\leq n-1}\norm{\tilde f}_{\Wkp[n-1][p][\Omega_{m,N}]}\norm*{\phi_m(x) x^\alpha -\actbig{\Psi_{\eps,(m,\alpha)}}(x)}_{\Wkpd[k][\infty][\Omega_{\wtilde m,N}\cap\cube^d]{x}}\\
		&\alplus\leq c_4 c' N^k\eps \sum_{\substack{m\in\MNd,\vspace{0.2em}\\\norm[\tiny]{m-\wtilde m}_{\ell^\infty} \leq 1}} \sum_{\pabs{\alpha}\leq n-1}\norm{\tilde f}_{\Wkp[n-1][p][\Omega_{m,N}]}\numberthis\label{eq:estimate_inner_block}
	\end{align*}
	where we used Lemma~\ref{lemma:partition_of_unity} (\ref{item:pou_supp}) together with Equation~\eqref{eq:support_mon_approx} in the second step and Equation~\eqref{eq:approx_mon_approx} in the last step. Here, $c_4=c_4(n,d,p)>0$ is a constant. Now, using that $\pabs{\{\alpha:\alpha\in\N_0^d,\pabs{\alpha}\leq n-1\}}\leq (n-1)^d$ and H{\"o}lder's inequality shows 
	\begin{align*}
		\sum_{\substack{m\in\MNd,\vspace{0.2em}\\\norm[\tiny]{m-\wtilde m}_{\ell^\infty} \leq 1}} \sum_{\pabs{\alpha}\leq n-1}\norm{\tilde f}_{\Wkp[n-1][p][\Omega_{m,N}]}&\leq  (n-1)^d  \sum_{\substack{m\in\MNd,\vspace{0.2em}\\\norm[\tiny]{m-\wtilde m}_{\ell^\infty} \leq 1}} \norm{\tilde f}_{\Wkp[n-1][p][\Omega_{m,N}]}\\
		\expl{H{\"o}lder's inequality; $q:=1-1/p$}&\leq (n-1)^d 3^{d/q} \left(\sum_{\substack{m\in\MNd,\vspace{0.2em}\\\norm[\tiny]{m-\wtilde m}_{\ell^\infty}\leq 1}} \norm{\tilde f}_{\Wkp[n-1][p][\Omega_{m,N}]}^p\right)^{1/p}\numberthis\label{eq:estimate_sum_with_hoelder}.
	\end{align*}
	Combining Equation~\eqref{eq:estimate_inner_block} with Equation~\eqref{eq:estimate_sum_with_hoelder} and plugging the result in Equation~\eqref{eq:estimate_outer_sum} finally yields
	\begin{align*}
		&\norm*{\sum_{m\in\MNd}\phi_m(x) p_m(x) -\act{\Phi_{P,\eps}}(x)}_{\Wkpd[k][p][\cube^d]{x}}^p\\
		\alplus&\leq \left(c_4 c' 3^{d/q} (n-1)^d N^k\eps \right)^p\sum_{\wtilde m\in\MNd} \sum_{\substack{m\in\MNd,\vspace{0.2em}\\\norm[\tiny]{m-\wtilde m}_{\ell^\infty}\leq 1}} \norm{\tilde f}_{\Wkp[n-1][p][\Omega_{m,N}]}^p\\
		&\leq c_5 N^{kp}\eps^p\norm{f}_{\Wkp[n][p][\cube^d]}^p,\numberthis\label{eq:k_p}
	\end{align*}
	where the last step is the same as Step 3 of the proof of Lemma~\ref{lemma:polynomial_approximation} and $c_5=c_5(n,d,p,k)>0$ is a constant. Hence the case $s=0$ and $s=1$ is proven.
	
	\textbf{Step 4 (Interpolation)}: To show the general statement for $0\leq s\leq 1$ we use the interpolation inequality from Corollary~\ref{cor:interpol_norm} together with Equation~\eqref{eq:k_p} and directly get
	\[
		\norm*{\sum_{m\in\MNd}\phi_m(x) p_m(x) -\act{\Phi_{P,\eps}}(x)}_{\Wkpd[s][p][\cube^d]{x}}\leq c_6 N^s\eps\norm{f}_{\Wkp[n][p][\cube^d]}
		\]
		for a constant $c_6=c_6(n,p,d,s)>0$. This concludes the proof of the lemma.
\end{proof}

Finally, we are ready to proof the upper complexity bounds.
\begin{proof}[Proof of Theorem {\rm\ref{thm:main}}]
	The proof can be divided into two steps: First, we approximate the function $f$ by a sum of localized polynomials and then approximate this sum by a network.

	For the first step, we set
	\begin{equation}\label{eq:big_N}
		N:=\ceil*{\left(\frac{\eps}{2CB}\right)^{-1/(n-s)}},
	\end{equation}
	where $C=C(n,d,p)>0$ is the constant from Corollary~\ref{lemma:polynomial_approximation}. Without loss of generality we may assume that $CB\geq 1$. The same corollary yields that if $\Psi=\Psi(d,N)=\left\{\phi_m:m\in\MNd\right\}$ is the partition of unity from Lemma \ref{lemma:partition_of_unity}, then there exist polynomials $p_m(x)=\sum_{\pabs{\alpha}\leq n-1}c_{m,\alpha}x^\alpha$ for $m\in\{0,\ldots,N\}^d$ such that
	\begin{align*}
		\norm*{f-\sum_{m\in\{0,\ldots,N\}^d}\phi_m p_m}_{\Wkp[s][p][\cube^d]}&\leq CB\left(\frac{1}{N}\right)^{n-s}\\
		&\leq CB \frac{\eps}{2CB}=\frac{\eps}{2}\numberthis\label{eq:final_triangle_1}.
	\end{align*}

	For the second step, let $C_1=C_1(n,d,p,s)>0$, $C_2=C_2(n,d)>0$ and $C_3=C_3(n,d)>0$ be the constants from Lemma~\ref{lemma:network_polynomial_approximation} and $\Phi_{P,\eps}$ be the neural network provided by Lemma \ref{lemma:network_polynomial_approximation} with $\eps^{n/(n-s)}/(8CB^2 C_1)$ instead of $\eps$. Then $\Phi_{P,\eps}$ has at most 
	\[
		C_2\log_2\left(8CB^2 C_1\eps^{-n/(n-s)}\right)\leq C_2\left(\log_2(8CB^2 C_1)+\log_2\big(\eps^{-n/(n-s)}\big)\right)\leq C' \log_2\left(\eps^{-n/(n-s)}\right)
		\]
			layers for a constant $C'=C'(n,d,p,B,s)>0$ and at most
	\begin{align*}
		&C_3\left(\left(\frac{\eps}{2CB}\right)^{-1/(n-s)}+2\right)^d\log_2\left(8CB^2 C_1\eps^{-n/(n-s)}\right)\\
		&\alplus\leq C_3 3^d \left(\frac{\eps}{2CB}\right)^{-d/(n-s)}\log_2\left(8CB^2 C_1\eps^{-n/(n-s)}\right)\\
		&\alplus\leq C''\eps^{-d/(n-s)}\log_2\left(\eps^{-n/(n-s)}\right)
	\end{align*}
		nonzero weights and neurons. Here $C''=C''(n,d,p,B,s)$ is a suitable constant and we used $(2CB)/\eps\geq 1$ in the first step. It holds that the architecture of $\Phi_{P,\eps}$ is independent of the function $f$. Furthermore, we have
	\begin{align*}
		\norm*{\sum_{m\in\MNd}\phi_m p_m -\act{\Phi_{P,\eps}}}_{\Wkp[s][p][\cube^d]}&\leq C_1 B N^s \frac{\eps^{n/(n-s)}}{8CB^2 C_1}\\
		&\leq \left(\left(\frac{\eps}{2CB}\right)^{-1/(n-s)}+1\right)^s\frac{\eps^{n/(n-s)}}{8CB}\\
		&\leq \left(\left(\frac{\eps}{2CB}\right)^{-s/(n-s)}+1\right)\frac{\eps^{n/(n-s)}}{8CB},
	\end{align*}
	where we used the inequality $(x+y)^\sigma\leq x^\sigma+y^\sigma$ for $x,y\geq 0$ and $0\leq \sigma\leq 1$ in the last step. We now continue the above computation using that $s/(n-s)\leq 1$ and $2CB\geq 1$ and therefore $(2CB)^{s/(n-s)}\leq 2CB$ and get
	\begin{align*}
		\left(\left(\frac{\eps}{2CB}\right)^{-s/(n-s)}+1\right)\frac{\eps^{n/(n-s)}}{8}&\leq \frac{1}{8CB}\left(2CB \eps+\eps^{n/(n-s)}\right)\\
		&\leq \frac{1}{8CB}\left(2CB \eps+2CB \eps\right)=\frac{\eps}{2}.
	\end{align*}
	Combining the previous computations we get
	\begin{equation}\label{eq:final_triangle_2}
		\norm*{\sum_{m\in\MNd}\phi_m p_m -\act{\Phi_{P,\eps}}}_{\Wkp[s][p][\cube^d]}\leq \frac{\eps}{2}.
	\end{equation}
	Using the triangle inequality and Equations \eqref{eq:final_triangle_1} and \eqref{eq:final_triangle_2} we finally obtain
		\begin{align*}
			&\norm*{f-\act{\Phi_{P,\eps}}}_{\Wkp[s][p][\cube^d]}\\
			&\alplus\leq \norm*{f-\sum_{m\in\MNd}\phi_m p_m}_{\Wkp[s][p][\cube^d]}+\norm*{\sum_{m\in\MNd}\phi_m p_m-\act{\Phi_{P,\eps}}}_{\Wkp[s][p][\cube^d]}\\
			&\alplus \leq\frac{\eps}{2}+\frac{\eps}{2}=\eps,
		\end{align*}
		which concludes the proof.
\end{proof}

\section{Lower bounds for approximations}\label{app:lower_bounds}
Lower complexity bounds for approximations in $L^\infty$ norm, which correspond to the case $k=0$ in Theorem~\ref{thm:lower_bound}, have already been shown in Theorem~4~a) in \cite{yarotsky2017error}. For lower complexity bounds of $W^{1,\infty}$ approximations we modify the proof strategy outlined in~\cite[Theorem~4~a)]{yarotsky2017error}. 

We start by showing an auxiliary result, that is used in the proof of Proposition~\ref{prop:lower_bounds}.
\begin{lemma}\label{lemma:lower_bounds_affine_set}
	Let $d\in\N$ and $\Phi$ be a neural network with $d$-dimensional input and one-dimensional output. Moreover, let $x\in \cube^d$ and $\nu\in\R^d$. Then, there exists an open set $T=T(x,\nu)\subset\cube^d$ and $\delta=\delta(x,\nu,T)>0$ with $x+\lambda\delta\nu\in \overline{T}$ for $0\leq \lambda\leq 1$ and $\act{\Phi}$ is affine-linear on $T$.
\end{lemma}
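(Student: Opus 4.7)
The plan is to exploit the continuous piecewise affine-linear (CPWL) structure of $\act{\Phi}$ guaranteed by Remark~\ref{remark:nn_lipschitz}. I will decompose $\R^d$ into finitely many full-dimensional, pairwise interior-disjoint closed convex polyhedra $\overline{U_1},\ldots,\overline{U_N}$ whose union equals $\R^d$, on each of which $\act{\Phi}$ is affine-linear; equivalently, each $U_j$ is an open convex polyhedron. Such a decomposition arises from the sign pattern of every ReLU pre-activation in $\Phi$: once all these signs are frozen, every hidden unit is an affine function of the input $x$ and each sign constraint becomes an affine inequality, producing convex polyhedral cells. The open set $T$ will be chosen as $U_{j^*}\cap\cube^d$ for a suitable index $j^*$.

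Next I would shrink $\delta>0$ in two stages. Since $x$ lies in the open cube $\cube^d=(0,1)^d$, a first choice of $\delta$ keeps the segment $\{x+\lambda\delta\nu:\lambda\in[0,1]\}$ inside $\cube^d$. Setting $A:=\{j:x\in\overline{U_j}\}$, every piece $\overline{U_j}$ with $j\notin A$ is closed and does not contain $x$, hence has positive distance from $x$; a further shrinking of $\delta$ forces the segment to avoid all such pieces, placing it inside $\bigcup_{j\in A}\overline{U_j}$.

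The decisive step---and the main subtlety---is to collapse the segment into a single piece rather than a union. Here convexity saves the day: the endpoint $x+\delta\nu$ lies in $\overline{U_{j^*}}$ for some $j^*\in A$, and since $\overline{U_{j^*}}$ is a closed convex polyhedron containing both $x$ and $x+\delta\nu$, convexity places the whole segment $\{x+\lambda\delta\nu:\lambda\in[0,1]\}$ inside $\overline{U_{j^*}}$. This is precisely why the argument requires that the pieces be convex, not merely connected: without convexity the segment could exit and re-enter a piece and no single closure would be guaranteed to contain it.

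Finally I set $T:=U_{j^*}\cap\cube^d$. This is open, contained in $\cube^d$, and $\act{\Phi}$ is affine-linear on it by construction. To verify $y:=x+\lambda\delta\nu\in\overline{T}$ for every $\lambda\in[0,1]$, note that $y\in\overline{U_{j^*}}\cap\cube^d$; openness of $\cube^d$ supplies a ball around $y$ contained in $\cube^d$, and the identity $U_{j^*}=\mathrm{int}(\overline{U_{j^*}})$ (valid for full-dimensional open convex sets) ensures that every neighborhood of $y$ meets $U_{j^*}$. Hence every neighborhood of $y$ meets $T=U_{j^*}\cap\cube^d$, so $y\in\overline{T}$, as required.
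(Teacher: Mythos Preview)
Your argument is correct and takes a genuinely different route from the paper's. The paper works with the \emph{connected components} $(V_i)_{i=1}^k$ of the open set where $\act{\Phi}$ is locally affine, which are polyhedral but generally not convex; to trap a segment in one $\overline{V_q}$ it takes the sequence $x_n=x+(1/n)\nu$, invokes the pigeonhole principle to find a $q$ with infinitely many $x_n\in\overline{V_q}$, and then appeals to the geometric fact that a closed polyhedron containing a convergent sequence on a line must contain a terminal segment of that line (including the limit). Your approach instead refines the partition to the \emph{sign-pattern cells}, which are convex; once both endpoints $x$ and $x+\delta\nu$ lie in the same closed convex cell $\overline{U_{j^*}}$, convexity immediately captures the whole segment. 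This is cleaner and entirely self-contained, at the cost of needing the finer (convex) decomposition rather than the coarser connected-component one. One minor remark: your invocation of $U_{j^*}=\mathrm{int}(\overline{U_{j^*}})$ is superfluous, since $y\in\overline{U_{j^*}}$ already means every neighborhood of $y$ meets $U_{j^*}$; combined with the open ball in $\cube^d$ this gives $y\in\overline{T}$ directly.
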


\begin{proof}
	We start by defining the set $U:=\{x\in\R^d:\act{\Phi}\text{ is affine-linear on a neighborhood of }$x$\}$. Standard results on the number of pieces of ReLU neural networks \cite{montufar2014number} yield that $U$ has only finitely many polyhedral, connected components, $(V_i)_{i=1}^k$ for some $k\in\N$, with $U=\bigcup_{i=1}^k V_i$ and $\R^d=\bigcup_{i=1}^k\overline{V_i}$.	Note that if follows from the definition of $U$ that $V_i$ is open for $i=1,\ldots,k$.
	
	Now, set $x_n:=x+(1/n)\nu$. By the pigeonhole principle, there exists $q\in\{1,\ldots,k\}$, such that $\overline{V_q}$ contains infinitely many $x_n$. It is not hard to see that if a closed polyhedron contains a converging sequence on a line, then it also contains a small section of the line including the limit point of the sequence. Thus, there exists $\delta>0$ such that $\{x+\lambda\delta\nu:0\leq \lambda\leq 1\}\subset \overline{V_q}\cap\cube^d\subset\overline{V_q\cap\cube^d}$. Then, setting $T:=V_q\cap\cube^d$ shows the claim.
\end{proof}

As in~\cite{yarotsky2017error}, we make use of a combinatorial quantity measuring the expressiveness of a set of binary valued functions $H$ defined on some set $X$, called \emph{VC-dimension} (see e.g.\ \cite[Chapter 3.3]{anthony2009neural}). We define
\[
	\vcdim(H):=\sup\left\{m\in\N\,: \begin{array}{l}
		\text{there exist }x_1,\ldots,x_m\in X\text{ such that for any }y\in\{0,1\}^m\\[0.3em]
		\text{there is a function }h\in H \text{ with }h(x_i)=y_i\text{ for }i=1,\ldots,m
	\end{array}
		\right\}.
	\]

The idea of the next proposition is to relate the approximation error $\eps$ with the number of weights $M(\Arch_\eps)$ of an architecture $\Arch_\eps$ capable of realizing such an approximation. To this end, we construct a set of functions $H$ parameterized by elements of $\R^{M(\Arch_\eps)+1}$. 

If $w\in\R^{M(\Arch_\eps)}$ and $\delta>0$ is chosen appropriately, then a directional derivative of the function realized by $\Arch_\eps(w)$ is computed for the evaluation of $h((w,\delta),\cdot)\in H$. By exploiting the approximation capacity of derivatives of functions realized by $\Arch_\eps(w)$ we can find a lower bound for $\vcdim(H)$ depending on $\eps$ (Claim~1).


On the other hand, \cite[Theorem~8.4]{anthony2009neural} yields an upper bound of the VC-dimension of $H$ in terms of the number of computations and the dimension of the parameterization of $H$ which can be expressed as a function of $M(\Arch_\eps)$ (Claim~2). Together this gives the desired relation.
\begin{proposition}\label{prop:lower_bounds}
	Let $d\in\N$, $n\in\N_{\geq 2}$ and $B>0$. Then, there are constants $c=c(n,B)>0$ and $C=C(d)$ with the following property:

	Let $N\in\N,0 <\eps\leq cN^{-(n-1)}$ and $\Arch_\eps=\Arch_\eps(d,n,\eps)$ be a neural network architecture with $d$-dimensional input and one-dimensional output such that for any $f\in\Fnd$ there is a neural network $\Phi_\eps^f$ that has architecture $\Arch_\eps$ and 
	\begin{equation}\label{eq:approximation_lower_bound}
		\norm*{\act{\Phi_\eps^f}-f}_{\Wkp[1][\infty][\cube^d]}\leq \eps,
	\end{equation}
	then 
	\[
		N^d\leq C\cdot M(\Arch_\eps)^2.
		\]
\end{proposition}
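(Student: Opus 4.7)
The plan is to follow the VC-dimension based lower bound strategy of \cite[Theorem~4~a)]{yarotsky2017error}, adapted to the $W^{1,\infty}$ setting: instead of reading off the sign of pointwise function values, we will read off the sign of a directional derivative of the network realization by taking a finite difference along an affine piece guaranteed by Lemma~\ref{lemma:lower_bounds_affine_set}. The outcome will be a hypothesis class $H$, parameterised by $M(\Arch_\eps)+1$ reals, for which we will show $\vcdim(H)\geq N^d$ (by explicit shattering of an $N^d$-point grid built from scaled bumps) and $\vcdim(H)\leq C(d)\,M(\Arch_\eps)^2$ (via the standard VC bound for hypothesis classes defined by $\bigO(M(\Arch_\eps))$ arithmetic operations and sign tests on $M(\Arch_\eps)+1$ parameters, cf.~\cite[Theorem~8.4]{anthony2009neural}). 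Combining the two estimates gives the claim.

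To construct the hard functions, fix once and for all a bump $\phi_0\in C_c^\infty(\R^d)$ supported in $B_{1/2,\pabs{\cdot}}(0)$ with $\phi_0(0)=0$ and $\partial_1\phi_0(0)=1$, put $c_0:=\norm{\phi_0}_{W^{n,\infty}(\R^d)}$, let $\{x_1,\dots,x_{N^d}\}$ be a uniform $N^d$-point grid in $(0,1/2)^d$ of spacing $1/N$, and set $A:=B\,c_0^{-1}\,N^{-n}$ and $\phi_i(x):=A\,\phi_0(N(x-x_i))$. The $\phi_i$ have pairwise disjoint supports contained in $\cube^d$, satisfy $\norm{\phi_i}_{\Winfnc}\leq B$, and $\partial_1\phi_i(x_i)=AN=B\,c_0^{-1}\,N^{-(n-1)}=:\gamma$. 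Hence for every $y\in\{-1,+1\}^{N^d}$ the function $f_y:=\sum_i y_i\phi_i$ lies in $\Fnd$ with $\partial_1 f_y(x_i)=y_i\gamma$. Taking $c:=B/(3c_0)$, which depends only on $n$ and $B$, the standing hypothesis $\eps\leq cN^{-(n-1)}$ ensures $\eps<\gamma/2$.

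For each such $y$ let $g:=\act{\Arch_\eps(w_y)}$ with $\norm{g-f_y}_{\Wkp[1][\infty][\cube^d]}\leq\eps$. Apply Lemma~\ref{lemma:lower_bounds_affine_set} at each $x_i$ with $\nu=e_1$ to obtain open sets $T_{y,i}\subset\cube^d$ on which $g$ is affine and numbers $\delta_{y,i}>0$ with $x_i+[0,\delta_{y,i}]e_1\subset\overline{T_{y,i}}$, and set $\delta_y:=\min_{1\leq i\leq N^d}\delta_{y,i}>0$. On each segment $x_i+[0,\delta_y]e_1$ the function $g$ is affine with some constant slope $s_i$ in direction $e_1$ (affineness extends from $T_{y,i}$ to $\overline{T_{y,i}}$ by continuity of $g$), so
\[
g(x_i+\delta_y e_1)-g(x_i)=\delta_y s_i.
\]
The bound $\norm{\partial_1 g-\partial_1 f_y}_{L^\infty(\cube^d)}\leq\eps$, combined with continuity of the smooth function $\partial_1 f_y$ and with the fact that $s_i$ is the almost-everywhere value of $\partial_1 g$ on $T_{y,i}$, yields $|s_i-\partial_1 f_y(x_i)|\leq\eps<\gamma/2$; hence $\sgn(s_i)=y_i$ and $\sgn\bigl(g(x_i+\delta_y e_1)-g(x_i)\bigr)=y_i$.

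Define finally the class $H:=\{h_{w,\delta}:w\in\R^{M(\Arch_\eps)},\,\delta>0\}$ by
\[
h_{w,\delta}(x):=\mathbf{1}_{\{\act{\Arch_\eps(w)}(x+\delta e_1)-\act{\Arch_\eps(w)}(x)>0\}}.
\]
The previous paragraph exhibits, for every $y\in\{-1,+1\}^{N^d}$, a parameter pair $(w_y,\delta_y)$ with $h_{w_y,\delta_y}(x_i)=\mathbf{1}_{\{y_i=+1\}}$, so $\vcdim(H)\geq N^d$. On the other hand $h_{w,\delta}(x)$ is computed from its $M(\Arch_\eps)+1$ real parameters and $x$ by two evaluations of a ReLU network with $M(\Arch_\eps)$ weights, i.e.\ by $\bigO(M(\Arch_\eps))$ arithmetic operations and sign tests, so \cite[Theorem~8.4]{anthony2009neural} yields $\vcdim(H)\leq C(d)\,M(\Arch_\eps)^2$. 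Combining the two bounds gives $N^d\leq C(d)\,M(\Arch_\eps)^2$. The central technical obstacle is pinning down a single $\delta_y$ that simultaneously makes the finite difference a true directional derivative on an affine piece at every grid point: this is exactly what Lemma~\ref{lemma:lower_bounds_affine_set} (applied with fixed $\nu=e_1$) provides, and continuity of $\partial_1 f_y$ is what allows transferring the almost-everywhere Sobolev bound to each specific point $x_i$.
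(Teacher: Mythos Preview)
Your argument is correct and follows the same VC-dimension strategy as the paper's proof (finite differences along an affine piece supplied by Lemma~\ref{lemma:lower_bounds_affine_set}, then \cite[Theorem~8.4]{anthony2009neural}), with the pleasant simplification of using the fixed direction $e_1$ and a bump whose first partial is nonzero at its centre; the paper instead uses a radial bump with maximum at the centre, evaluates the directional derivative at an offset point $\tilde x_m=x_m+\nu(x_m)/(4N)$, and builds an $x$-dependent inward-pointing direction $\nu(x)$ so that grid points on $\partial[0,1]^d$ are allowed. Your choice of interior grid points makes all of that machinery unnecessary.

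One slip to fix: you cannot place an $N^d$-point grid of spacing $1/N$ inside $(0,1/2)^d$ (in each coordinate you need length at least $(N-1)/N$). Take instead, e.g., $x_m=(m+\tfrac12\mathbf{1})/N$ for $m\in\{0,\dots,N-1\}^d$; then the supports $B_{1/(2N),\pabs{\cdot}}(x_m)$ are pairwise disjoint and contained in $[0,1]^d$, the points lie in $(0,1)^d$, and for small $\delta$ the segments $x_m+[0,\delta]e_1$ stay in $\cube^d$, so the rest of your argument goes through unchanged.
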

%

\begin{proof}
We prove the proposition by showing that there exists a function $h:\R^{M(\Arch_\eps)+1}\times [0,1]^d\to\{0,1\}$ with
	\[
		N^d\leq \vcdim\left(\left\{x\mapsto h(w,x): w\in\R^{M(\Arch_\eps)+1}\right\}\right)\leq C\cdot M(\Arch_\eps)^2.
		\]
		To simplify the notation, we set 
		\[
			H:=\left\{x\mapsto h(w,x): w\in\R^{M(\Arch_\eps)+1}\right\}.
			\]

\textbf{Step 1 (Construction of $h$):} Let now $0<\eps<c_1 N^{-(n-1)}/(3\sqrt d)$ for some constant $c_1=c_1(n)>0$ to be chosen later, and $\Arch_\eps=\Arch_\eps(d,n,\eps)$ be a neural network architecture as in the claim of the proposition. 
	
	For $x\in [0,1]^d$ we define a direction $\nu(x)\in\R^d$ that points from $x$ into $\cube^d$ if $x\in [0,1]^d\setminus (0,1)^d$ and equals $e_1$ if $x\in (0,1)^d$. We set
	\[
	\tilde \nu(x):=\begin{cases}
			e_1, & \text{if }0<x_k<1\text{ for }k=1,\ldots,d\\
			\bmat{c}{\chi_{\{0\}}(x_k)-\chi_{\{1\}}(x_k): k=1,\ldots,d},& \text{else,}
	\end{cases}
		\]
		and define $\nu(x):=\tilde \nu(x)/\pabs{\tilde \nu(x)}$. Moreover, we set $\tilde x:=x+\nu(x)/(4N)$ for $x\in [0,1]^d$.

		To construct $h$ we start be defining a function $g:\R^{M(\Arch_\eps)+1}\times [0,1]^d\to\R$ and then, to get a binary valued function, define $h$ by thresholding $g$. In detail, we set
		\[
			g((w,\delta),x):=\begin{cases}
				\frac{1}{\delta}\cdot\Big(\act{\Arch_\eps(w)}(\tilde x-\delta \nu(x))-\act{\Arch_\eps(w)}(\tilde x)\Big), & \text{if }\delta\neq 0\\
				0,& \text{if }\delta=0
			\end{cases}
			\]
			for $w\in\R^{M(\Arch_\eps)}$, $\delta\in\R$ and $x\in [0,1]^d$. Now, we define $h:\R^{M(\Arch_\eps)+1}\times  [0,1]^d\to\{0,1\}$ by
			\[
				h((w,\delta),x):=\begin{cases}
					1, &\text{if }g((w,\delta),x)>cN^{-(n-1)}/2\\
					0, &\text{else}
				\end{cases}
\]
for $w\in\R^{M(\Arch_\eps)}$, $\delta\in\R$ and $x\in [0,1]^d$.

	\textbf{Claim 1 ($N^d\leq \vcdim(H)$):} Let $x_1,\ldots,x_{N^d}\in [0,1]^d$ such that $\pabs{x_m-x_n}\geq 1/N$ for all $m,n=1,\ldots,N^d$ with $m\neq n$ and such that $\tilde x_m\in\cube^d$ for $m=1,\ldots,N^d$. Moreover, let $y_1,\ldots,y_{N^d}\in\{0,1\}$ be arbitrary. We aim to construct construct $w_y\in\R^{M(\Arch_\eps)}$ and $\delta_y\in\R$ with 
	\[
		h((w_y,\delta_y),x_m)=y_m\quad\text{for}\quad m=1,\ldots,N^d.
		\] To this end, we first define a function $f_y\in\Fnd$ with $f_y(x_m)=y_m\cdot a$ for some constant $a>0$, and then make use of a neural network $\Phi^{f_y}$ that approximates $f_y$.

	\textbf{Step 2 (Construction of $f_y$):} We start by defining a bump function $\psi\in C^\infty(\R^d)$ by 
		\begin{equation*}
			\psi(x):=\begin{cases}
				e^{-\left(1-4\pabs{x}^2\right)^{-1}+1}, & \text{if }\pabs{x}<\nicefrac{1}{2},\\
					0,&\text{else,}
			\end{cases}
		\end{equation*}
		such that $\psi(0)=1$ and $\supp \psi\subset B_{1/2,\pabs{\cdot}}(0)$. For the derivative $D\psi$ of $\psi$ it holds that there exists a function $\phi:(-\nicefrac{1}{2},\nicefrac{1}{2})\to\R_{> 0}$ such that\footnote{Precisely, $\phi(r)=e^{-\left(1-4r^2\right)^{-1}+1}\cdot\frac{8}{(1-4r^2)^2}$.}
		\[
			(D\psi)(x)=\phi(\pabs{x})\cdot (-x)\quad\text{for all }x\text{ with }\pabs{x}<\nicefrac{1}{2}.
			\]
	Thus, if $x\in\R^d$ with $\pabs{x}<1/2$, then we have for the derivative of $\psi$ in direction $-x/\pabs{x}$ at $x$ that $(D_{-\nicefrac{x}{\pabs{x}}}\psi)(x)=\phi(\pabs{x})\pabs{x}>0$ only depends on the norm of $x$.

			Next, we define $f_y\in C^\infty(\R^d)$ by 
			\[
			f_y(x):=\sum_{m=1}^{N^d} y_m \frac{BN^{-n}}{\norm{\psi}_{\Wkp[n][\infty][\cube^d]}}\psi\left(N(x-x_m)\right)	
			\]
			for $x\in\R^d$. We have $\pabs{f}_{\Wkp[k][\infty][\cube^d]}\leq B N^{-n}N^{k}\leq B$ for $1 \leq k\leq n$ and, consequently, $f\in\Fnd$. Furthermore, for $x\in\R^d$ with $\pabs{x}<1/(2N)$ it holds that
			\[
				(D_{-\nicefrac{x}{\pabs{x}}} f)(x_m+x)=y_m \frac{BN^{-n}}{\norm{\psi}_{\Wkp[n][\infty][\cube^d]}}\phi(\pabs{Nx})N^2\pabs{x}
				\]
				and, in particular, if $\pabs{x}=1/(4N)$, then 
				\begin{equation}\label{eq:lower_directional_derivative}
					(D_{-\nicefrac{x}{\pabs{x}}} f)(x_m+x)=y_m\phi(1/4)\cdot\frac{BN^{-(n-1)}}{4\norm{\psi}_{\Wkp[n][\infty][\cube^d]}}=y_m c_1 N^{-(n-1)}.
				\end{equation}
					Here, we defined the constant $c_1=c_1(n,B)>0$ which was left unspecified in the beginning of the proof by $c_1:=\frac{B\phi(1/4)}{4\norm{\psi}_{\Wkp[n][\infty][\cube^d]}}$.
					
			 \textbf{Step 3 (Existence of $w_y$ and $\delta_y$):} We can find a vector $w_y\in\R^{M(\Arch_\eps)}$ such that for the neural network $\Phi_\eps^{f_y}:=\Arch_\eps(w_y)$ Equation~\eqref{eq:approximation_lower_bound} holds (with ${f_y}$ instead of $f$). In particular, we have 
					\begin{equation}\label{eq:lower_approximation_fy_semi_norm}
						\pabs{\act{\Phi_\eps^{f_y}}-f_y}_{\Wkp[1][\infty][\cube^d]}\leq \eps.
					\end{equation}

Next, for $x\in \cube^d$ and $\nu\in\R^d$ we get from~Lemma~\ref{lemma:lower_bounds_affine_set} that there exists an open set $T_{x,\nu}\subset\R^d$ and $\delta=\delta(x,\nu,T_{x,\nu})>0$ with $x+\lambda\delta\nu\in \overline{T_{x,\nu}}$ for $0\leq \lambda\leq 1$ and $\act{\Phi_\eps^{f_y}}$ is affine-linear on $T_{x,\nu}$. We define
		\[
		\delta_y:=\min_{m=1,\ldots N^d} \delta\left(\tilde x_m,-\nu(x_m), T_{\tilde x_m,-\nu(x_m)}\right)>0.
			\]

			Let $m\in\{1,\ldots,N^d\}$ and let $F_m:\R^d\to \R$ be an affine-linear function such that $\act{\Phi_\eps^{f_y}}(x)= F_m(x)$ for all $x \in T_{\tilde x_m,-\nu(x_m)}$. It then follows from the continuity of $\act{\Phi_\eps^{f_y}}$ that $\act{\Phi_\eps^{f_y}}(x) = F_m(x)$ for all $x\in \overline{T}_{\tilde x_m,-\nu(x_m)}$. This, together with the choice of $\delta_y$ implies that 
			\[
				g((w_y,\delta_y),x_m)=D_{-\nu(x_m)}F_m(\tilde x_m)=D_{-\nu(x_m)}F_m.
				\] 
			Recall that $T_{\tilde x_m,-\nu(x_m)}$ is open and $f_y$ and $\act{\Phi_\eps^{f_y}}$ are continuously differentiable on $T_{\tilde x_m,-\nu(x_m)}$. Thus, the weak and strong derivative agree and Equation~\eqref{eq:lower_approximation_fy_semi_norm} implies 
			\[
				\pabs{D^i f_y(x)- D^i F_m}=\pabs{D^i f_y(x)- D^i\act{\Phi_\eps^{f_y}}(x)}\leq \eps
				\]
				for all $x\in T_{\tilde x_m,-\nu(x_m)}$ and $i=1,\ldots,d$. Using the continuity of $D^i f_y$ we get that $\pabs{D^i f_y(\tilde x_m)- D^i F_m}\leq\eps$ for $i=1,\ldots,d$ and hence 
				\begin{equation}\label{eq:approximation_in_tildexm}
					\pabs{D_\nu f_y(\tilde x_m)-D_\nu F_m}\leq\sqrt d \eps\pabs{\nu}\quad\text{for}\quad \nu\in\R^d.
				\end{equation} An addition of zero yields
			\begin{equation}\label{eq:lower_addition_zero}
				g((w_y,\delta_y),x_m)=D_{-\nu(x_m)} F_m=D_{-\nu(x_m)} f_y(\tilde x_m)+D_{-\nu(x_m)} F_m - D_{-\nu(x_m)} f_y(\tilde x_m).
			\end{equation}

			We get for the case $y_m=1$ that
				\begin{align}\label{eq:lower_y1}
					g((w_y,\delta_y),x_m)\geq D_{-\nu(x_m)} f_y(\tilde x_m) -\sqrt d \eps \geq c_1 N^{-(n-1)}-c_1 N^{-(n-1)}/3,
				\end{align}
				where we used Equation~\eqref{eq:lower_addition_zero} together with Equation~\eqref{eq:approximation_in_tildexm} and $\pabs{\nu(x_m)}=1$ for the first step and Equation~\eqref{eq:lower_directional_derivative} together with the upper bound for $\eps$ for the second step.

				 In a similar way, we get for the case $y_m=0$ that
				\begin{equation}\label{eq:lower_y0}
					g((w_y,\delta_y),x_m)\leq \sqrt d \eps\leq  c_1 N^{-(n-1)}/3,
				\end{equation}
where we used that $D_{-\nu(x_m)} f_y(\tilde x_m)=0$.

Finally, combining Equation~\eqref{eq:lower_y1} and~Equation~\eqref{eq:lower_y0} reads as
	\[
		g((w_y,\delta_y),x_m)\begin{cases}
		> c_1 N^{-(n-1)}/2, &\text{if }y_m=1,\\
		< c_1 N^{-(n-1)}/2, &\text{if }y_m=0,
	\end{cases}
		\]
		which proves Claim~1.

			\textbf{Claim 2 ($\vcdim(H)\leq C\cdot M(\Arch_\eps)^2$):} We start by showing that there exists a constant $C'=C'(d)$ such that $h((w,\delta),x)$ can be computed using $C'\cdot M(\Arch_\eps)$ operations of the following types 
		\begin{itemize}
			\item the arithmetic operations $+,-,\times$, and $/$ on real numbers,
			\item jumps conditioned on $>,\geq,<,\leq,=$, and $\neq$ comparisons of real numbers
		\end{itemize}
		for all $w\in\R^{M(\Arch_\eps)+1}$ and $x\in [0,1]^d$.

		There exists an absolute constant $C_1>0$ such that at most $C_1\cdot d$ operations of the specified type are needed to compute $\nu(x)$. Hence, the same holds true for $\tilde x$ and $\tilde x - \delta \nu(x)$.
			
		Note that the number of neurons that are needed for the computation of $\act{\Arch_\eps(w)}$ can be bounded by $M(\Arch_\eps)$.	Thus, $\act{\Arch_\eps(w)}$ can be computed using at most $C_2\cdot M(\Arch_\eps)$ operations where $C_2>0$ is an absolute constant. Hence, there exists a constant $C_3=C_3(d)$ such that for the number of operations of the specified type $t$ needed for the computation of $h(x)$ where $x\in [0,1]^d$ it holds that $t\leq C_3 M(\Arch_\eps)$.

 Finally, \cite[Theorem~8.4]{anthony2009neural} implies 
		\[
			\vcdim\left(\left\{x\mapsto h(w,x): w\in\R^{M(\Arch_\eps)+1}\right\}\right)\leq 4 (M(\Arch_\eps)+1)(C_3\cdot M(\Arch_\eps)+2)\leq C_4 M(\Arch_\eps)^2,
			\]
			where $C_4=C_4(d)>0$ is a suitable constant.
\end{proof}

The proof of the lower complexity bounds is now a simple consequence of Proposition~\ref{prop:lower_bounds}.
\begin{proof}[Proof of Theorem {\rm\ref{thm:lower_bound}}]
	The case $k=0$ corresponds to~\cite[Theorem~4~a)]{yarotsky2017error}.

	For the case $k=1$, let $c=c(n,B)$ and $C=C(d)$ be the constants from Proposition~\ref{prop:lower_bounds} and set 
	\[
		N:=\floor*{\left(\frac{c}{\eps}\right)^{1/(n-1)}}.
		\]
	Then, $N\leq (c/\eps)^{1/(n-1)}$ and, thus, $0<\eps \leq c N^{-(n-1)}$. Now, Proposition~\ref{prop:lower_bounds} implies that 			\begin{equation}\label{eq:lower_Nd}
	N^d\leq  C M(\Arch_\eps)^2.
	\end{equation}
	We also have $(c/\eps)^{1/(n-1)}\leq 2N$ and hence $c_1 \eps^{-d/(n-1)}\leq N^d$ for a suitable constant $c_1=c_1(n,B)>0$. Combining this estimate with Equation~\ref{eq:lower_Nd} yields
	\[
		c_1 \eps^{-d/(n-1)}\leq N^d\leq C M(\Arch_\eps)^2,
		\]
		which finally results in
		\[
			C'\eps^{\nicefrac{-d}{2(n-1)}}\leq M(\Arch_\eps) 
			\]
		for a constant $C'=C'(d,n,B)>0$.	
\end{proof}

\bibliography{literature}
\bibliographystyle{abbrv}

\end{document}